\documentclass[11pt]{amsart}
\usepackage{latexsym, mathrsfs, color, tikz, multirow,bbm,mathtools,amsthm}
\usepackage{graphics}
\usepackage{subcaption}
\input{xy}
\xyoption{all}

\usepackage[colorlinks=true, pdfstartview=FitV,
 linkcolor=black,citecolor=black,urlcolor=black]{hyperref}
 
\usepackage{tikz}
\usepackage{tkz-graph}
\usetikzlibrary{automata,arrows}
\usetikzlibrary{shapes.geometric}
\usetikzlibrary{patterns}
\usepackage{tkz-euclide}
%\usetkzobj{polygons}

\setlength{\textwidth}{460pt} \setlength{\hoffset}{-45pt}

\numberwithin{equation}{section}
\theoremstyle{plain}
\newtheorem{Thm}{Theorem}[section]
\newtheorem{Prop}[Thm]{Proposition}

\newtheorem{Lem}[Thm]{Lemma}
\newtheorem{Conj}[Thm]{Conjecture}

\theoremstyle{definition}
\newtheorem{Def}[Thm]{Definition}
\newtheorem{Exa}[Thm]{Example}
\newtheorem{Rmk}[Thm]{Remark}
\newtheorem{LDef}{Definition $\Lambda$-\hspace{-.4em}}
\newtheorem{LExa}{Example $\Lambda$-\hspace{-.4em}}

\newenvironment{red}{\relax\color{red}}{\relax}
\newenvironment{blue}{\relax\color{blue}}{\hspace*{.5ex}\relax}

\newcommand{\ber}{\begin{red}}
\newcommand{\er}{\end{red}}
\newcommand{\beb}{\begin{blue}}
\newcommand{\eb}{\end{blue}}

\newcommand{\bw}{{\boldsymbol{w}}}
\newcommand{\bv}{{\boldsymbol{v}}}

\begin{document}
\title{Geometric description of  $C$-vectors and real L\"osungen}
\author[K.-H. Lee]{Kyu-Hwan Lee$^{\star}$}
\thanks{$^{\star}$This work was partially supported by a grant from the Simons Foundation (\#712100).}
\address{Department of
Mathematics, University of Connecticut, Storrs, CT 06269, U.S.A.}
\email{khlee@math.uconn.edu}

\author[K. Lee]{Kyungyong Lee$^{\dagger}$}
\thanks{$^{\dagger}$This work was partially supported by NSF grant DMS 1800207, the University of Alabama, and Korea Institute for Advanced Study.}
\address{Department of Mathematics, University of Alabama,
Tuscaloosa, AL 35487, U.S.A. 
and Korea Institute for Advanced Study, Seoul 02455, Republic of Korea}
\email{klee94@ua.edu; klee1@kias.re.kr}

\author[M.~R. Mills]{Matthew R. Mills$^{\diamond}$}
\thanks{$^{\diamond}$This material is based upon work supported by the National Science Foundation under Award No.~1803521 and Michigan State University.}
\address{Department of Mathematics, Michigan State University,
East Lansing, MI 48824, U.S.A.}
\email{millsm12@msu.edu}

%\subjclass[2010]{Primary ??; Secondary ??}
\begin{abstract}
We introduce real L\"osungen as an analogue of real roots. For each mutation sequence of an arbitrary skew-symmetrizable matrix, we define a family of reflections along with associated vectors which are real L\"osungen and a set of curves on a Riemann surface. The matrix consisting of these vectors is called {\em $L$-matrix}. We explain how the $L$-matrix naturally arises in connection with the $C$-matrix. Then we conjecture that the $L$-matrix depends (up to signs of row vectors) only on the seed, and that the curves can be drawn without self-intersections, providing a new combinatorial/geometric description of $c$-vectors. 
\end{abstract}

\maketitle

\section{Introduction}

Let $Q$ be a quiver  with $n$ vertices and no oriented cycles of length $\le 2$.  The most basic invariant of a representation of $Q$ is its dimension vector. By Kac's Theorem \cite{Kac-1}, the dimension vectors of indecomposable representations of $Q$ are positive roots of the Kac--Moody algebra $\mathfrak g_Q$ associated to the quiver $Q$.  

When $Q$ is acyclic, a representation $M$ of $Q$ is called {\em rigid} if $\mathrm{Ext}^1(M, M)=0$, and the dimension vectors of indecomposable rigid representations are called {\em real Schur roots} as they are indeed real roots of $\mathfrak g_Q$. In the category of representations of $Q$, rigid objects are foundational. Therefore an explicit  description of real Schur roots is essential for the study of the category, and there have been various results related to description of real Schur roots of an acyclic quiver (\cite{BDSW,HK,IS,S,Se,ST}).

In a previous paper \cite{LL}, we conjectured  a correspondence   between real Schur roots of an acyclic quiver  and  non-self-crossing curves on a marked Riemann surface and hence proposed a new combinatorial/geometric description.
Recently, Felikson and Tumarkin \cite{FT}  proved our conjecture for all $2$-complete acyclic quivers. (An acyclic quiver is called  {\it $2$-complete} if it has multiple edges between any pair of vertices.)

Now, when $Q$ is general, it is natural to consider the {\em $c$-vectors} of $Q$ as dimension vectors of rigid objects. Indeed, when $Q$ is acyclic, the set of positive $c$-vectors is identical with  the set of real Schur roots 
\cite{NC}. For an arbitrary quiver $Q$,
a positive $c$-vector is the dimension vector of a rigid indecomposable representation of a quotient of the completed path algebra. This quotient was introduced by Derksen, Weyman and Zelevinksy \cite{DWZ}, and is called a {\em Jacobian algebra}. Thus $c$-vectors naturally generalize real Schur roots in this sense, though they are not necessarily real roots of the corresponding Kac--Moody algebra. 

Originally, $c$-vectors (and $C$-matrices) were defined in the theory of cluster algebras \cite{FZ}, and together with their companions, $g$-vectors (and $G$-matrices), played  fundamental roles in the study of cluster algebras (for instance, see \cite{DWZ, GHKK, MG, NZ, Pl}). As a cluster algebra is defined not only for a skew-symmetric matrix (i.e. a quiver) but also for an arbitrary skew-symmetrizable matrix, one can ask: \begin{quote} \em Can we have a combinatorial/geometric description of  the $c$-vectors (and $C$-matrices) of a cluster algebra associated with an arbitrary skew-symmetrizable matrix? 
\end{quote}
In this paper, we propose a conjectural, combinatorial/geometric model for $C$-matrices associated to an arbitrary skew-symmetrizable matrix, which extends our model from the acyclic case \cite{LL}.

For this purpose, we introduce the notion of {\em real L\"osungen} as an analogue of real roots, 
 and define a family of reflections along with associated vectors which are real L\"osungen for each mutation sequence of an arbitrary skew-symmetrizable matrix. The matrix consisting of these real L\"osungen is called {\em $L$-matrix}. We show that the $L$-matrix comes from certain leading terms when the $C$-matrix is presented using reflections. We conjecture that the $L$-matrices (up to signs of row vectors) depend
only on seeds, i.e., do not
depend on mutation sequences leading to the same seed. We believe
that understanding these new matrices is a key to generalizing Coxeter groups and their quotients
arising from cluster algebras, in particular, generalizing Felikson--Tumarkin's result 
\cite{FT2}.

\medskip

When a  skew-symmetrizable matrix is acyclic,  it is natural to consider the corresponding symmetrizable generalized Cartan matrix. For a general skew-symmetrizable matrix, we consider {\em generalized intersection matrices} (GIMs)\footnote{Some authors  call them {\em quasi-Cartan matrices}. For example, see \cite{BR}.} introduced by  Slodowy \cite{Slo,Slo-1}. 
A GIM is a square matrix $A=[a_{ij}]$ with integral entries such that
\begin{enumerate}
\item for diagonal entries, $a_{ii}=2$;
\item $a_{ij}>0$ if and only if $a_{ji}>0$;
\item $a_{ij}<0$ if and only if $a_{ji}<0$.
\end{enumerate}

Since we are more interested in cluster algebras associated with skew-symmetrizable matrices, we restrict ourselves to the  class of symmetrizable GIMs.
This class contains the collection of all symmetrizable generalized Cartan matrices as a special subclass.  

Let $\mathcal A$ be the (unital) $\mathbb Z$-algebra generated by $s_i, e_i$, $i=1,2, \dots, n$, subject to the following relations:
$$ s_i^2=1, \quad \sum_{i=1}^n e_i =1, \quad s_ie_i = -e_i, \quad e_is_j=  \begin{cases} s_i+e_i-1 &\text{if } i =j, \\ e_i &\text{if } i \neq j, \end{cases}  \quad e_ie_j=  \begin{cases} e_i &\text{if } i =j, \\ 0 & \text{if } i \neq j. \end{cases}$$
Let $\mathcal W$ be the subgroup of the units of $\mathcal{A}$ generated by $s_i$, $i=1, \dots, n$. Note that $\mathcal W$ is (isomorphic to) the universal Coxeter group. Thus the algebra $\mathcal A$ can be considered as the algebra generated by the reflections and projections of the universal Coxeter group. Keeping computations at the level of $\mathcal A$ will reveal some important features of mutations.

\begin{Def} \label{def-gim}
Let $A=[a_{ij}]$ be an $n\times n$ symmetrizable GIM, and $D=\mathrm{diag}(d_1, \dots , d_n)$ be the {\em symmetrizer}, i.e. the diagonal matrix such that $d_i \in \mathbb Z_{>0}$, $\gcd(d_1, \dots, d_n)=1$ and $AD$ is symmetric. Let $\Gamma = \sum_{i=1}^n \mathbb{Z}\alpha_i$ be the lattice generated by the formal symbols $\alpha_1,\cdots,\alpha_n$. 
\begin{enumerate}

\item An element $\gamma=\sum m_i\alpha_i \in \Gamma$ is called a {\em L\"{o}sung} if \begin{equation} \label{eqn-quad} \sum_{1\leq i,j\leq n} d_ja_{ij}  m_im_j =2 d_k \quad \text{ for some }k=1, \dots , n .\end{equation} 
A L\"{o}sung is {\em positive} if $m_i\geq 0$ for all $i$. Each $\alpha_i$ is called a {\em simple L\"{o}sung}.

\item Define a representation $\pi:\mathcal A \rightarrow \mathrm{End}(\Gamma)$ by
\[ \pi(s_i)(\alpha_j) = \alpha_j - a_{ji} \alpha_i \quad \text{ and } \quad \pi(e_i)(\alpha_j) =\delta_{ij} \alpha_i , \quad i,j=1, \dots ,n. \] We suppress $\pi$ when we write the action of an element of $\mathcal A$ on $\Gamma$.
A L\"{o}sung $\gamma$ is {\em real} if $\gamma=s_{i_1}s_{i_2} \cdots s_{i_k}(\alpha_i)$ for some $i=1, \dots, n$ and $k \ge 0$.

\end{enumerate}
\end{Def}

\begin{Rmk}
When $A$ is symmetric, a L\"osung is also called a {\em root} in some literature. For example, see \cite{ASS,Ri}. When $A$ is a generalized Cartan matrix of finite, affine or hyperbolic type, this terminology does not bring any confusion with a {root}\footnote{Historically, when Killing investigated the structure of a finite dimensional simple Lie algebra $L$ with Cartan subalgebra $\mathfrak h$, the roots of the characteristic polynomial $\det(\operatorname {ad} _{L}x-t)$, $x \in \mathfrak h$, were called the {roots} \cite{Bour}.} of the root system associated with $A$ because a L\"osung is a root of the root system \cite[Proposition 5.10]{Kac}. However, in general, a L\"osung is not a root of the root system. See \cite[p.11]{NF} for the case when $A$ is of type $E_{11}$. In order to avoid possible confusion, we introduce the term L\"osung to distinguish it from a root of a root system.  
 
Nevertheless, if $A$ is a generalized Cartan matrix, real L\"{o}sungen are the same as real roots of the Kac--Moody algebra associated with $A$. We expect that, for each symmetrizable GIM, there may exist a Lie algebra for which real roots can be defined and are compatible with real L\"{o}sungen, but we do not yet know which Lie algebra would be adequate. Some related works can be found in \cite{BR,BKL, BZ,BM, SY,Slo-1,Slo,XH}.
\end{Rmk}

Fix an $n\times n$ skew-symmetrizable matrix $B=[b_{ij}]$ and let $D=\mathrm{diag}(d_1, \dots, d_n)$ be its symmetrizer such that $BD$ is skew-symmetric, $d_i \in \mathbb Z_{>0}$ and $\gcd(d_1, \dots, d_n)=1$.  Consider the $n\times 2n$ matrix $\begin{bmatrix}B&I\end{bmatrix}$.  After a sequence $\bw$ of mutations, we obtain $\begin{bmatrix} B^\bw &C^\bw \end{bmatrix}$. The matrix $C^\bw$ is called the {\em $C$-matrix} and its row vectors the {\em $c$-vectors}. 
Write their entries as 
\begin{equation} \label{eqn-bcbc} B^\bw= \begin{bmatrix} b_{ij}^\bw \end{bmatrix}, \qquad
C^\bw= \begin{bmatrix} c_{ij}^\bw \end{bmatrix} = \begin{bmatrix} c_1^\bw \\ \vdots \\ c_n^\bw \end{bmatrix},\end{equation}
where $c_i^\bw$ are the $c$-vectors.
For a mutation sequence $\bw=[i_i, i_2, \dots , i_\ell]$, $i_j \in \{1,2,\dots, n\}$, we define $\bw[k]:= [i_i, i_2, \dots , i_\ell,k]$.
\begin{Def} \label{def-r}
For each  mutation sequence $\bw$, define $r_i^\bw \in \mathcal W \subset \mathcal A$ inductively  with the initial elements $r_i = s_i$,  $i=1, \dots, n$, as follows: 
\begin{equation} \label{def-sx_i-1} r_i^{\bw [k]}= \begin{cases} r_k^\bw r_i^\bw  r_k^\bw & \text{ if } \ b_{ik}^\bw c_{k}^\bw>0, \\ r_i^\bw & \text{ otherwise.} \end{cases} \end{equation}
Clearly, each $r_i^\bw$ is written in the form
\[ r_i^\bw = g_i^\bw s_i {(g_i^\bw)}^{-1}, \quad g_i^\bw \in \mathcal W, \quad i=1, \dots , n.\]
\end{Def}

This construction has been used in the literature including \cite{BMa,FT2,FT,ST} when the associated GIM is a Cartan matrix.

\begin{Def}\label{def-ell}
Fix a GIM $A$, and define
\[   l_i^\bw = g_i^\bw (\alpha_i), \qquad i=1, \dots , n. \]
Then the {\em $L$-matrix} $L^{\bw}$ associated to $A$ is defined to be the $n \times n$ matrix whose $i^\text{th}$ row is $l_i^\bw$ for $i= 1, \dots, n$, i.e., \[ L^{\bw} =  \begin{bmatrix} l_1^{\bw}  \\ \vdots \\ l_n^{\bw} \end{bmatrix},\] and the vectors $l_i^{\bw}$ are called the {\em $l$-vectors of $A$}. 

Note that the $L$-matrix and $l$-vectors associated to a GIM $A$ implicitly depend on the representation $\pi$ which is suppressed from the notation. When multiple GIMs are being discussed we will use the notation $l_i^{A,\bw}$ to distinguish between different sets of $l$-vectors. 
\end{Def}

When we fix a GIM, we will always choose a linear ordering $\prec$ on $\{ 1,2, \dots , n\}$ and define the associated GIM $A=[a_{ij}]$ by
\begin{equation} 
a_{ij}= \begin{cases}  b_{ij}  & \text{ if } i \prec j , \\
2 & \text{ if } i =j, \\ -b_{ij} & \text{ if } i \succ j . \end{cases}\ 
\label{eqn-gim}
\end{equation}
An ordering $\prec$ provides a certain way for us to regard the skew-symmetrizable matrix $B$ as acyclic even when it is not. 

As our geometric model, we consider a Riemann surface and admissible curves (Definition \ref{def-adm}), and define a map from the set of admissible curves to the set of monomials in $s_i$'s in $\mathcal W$ (Definition \ref{def-many}).
The first conjecture below extends our conjecture in \cite{LL} from acyclic quivers to skew-symmetrizable matrices. The second conjecture claims that we can choose a GIM $A$ to obtain a set of reflections that only depend on the seed.

\begin{Conj}\label{vague_conj-2}
Fix an ordering $\prec$ on $\{1,2, \dots , n \}$ so that a GIM $A$ is determined. Then for any mutation sequence $\bw$, there exist non-self-intersecting admissible curves $\eta_i^\bw$ such that $\pi(r_i^\bw) =  \pi \left (s(\eta_i^\bw)\right ),$
where $s(\eta_i^\bw)$ are the monomials in $\mathcal W$ associated to $\eta_i^\bw$ for $i=1,2,\dots , n$.
\end{Conj}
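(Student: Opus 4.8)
The plan is to argue by induction on the length $\ell$ of the mutation sequence $\bw$, building the curves $\eta_i^\bw$ in lockstep with the inductive definition \eqref{def-sx_i-1} of the reflections $r_i^\bw$. For the base case $\bw=\emptyset$ one has $r_i=s_i$, and I would take $\eta_i$ to be the elementary admissible curve whose associated monomial is $s_i$; such a curve exists in the model and is trivially non-self-intersecting. The entire content of the conjecture is then concentrated in a single inductive step: given non-self-intersecting admissible curves realizing the $r_i^\bw$, produce such curves realizing the $r_i^{\bw[k]}$.

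For the inductive step, I would split the indices according to the dichotomy in \eqref{def-sx_i-1}. When $b_{ik}^\bw c_k^\bw\le 0$ we have $r_i^{\bw[k]}=r_i^\bw$ and may simply reuse $\eta_i^\bw$. When $b_{ik}^\bw c_k^\bw>0$ we have the conjugation $r_i^{\bw[k]}=r_k^\bw\, r_i^\bw\, r_k^\bw$, and I would realize this geometrically by a splicing construction: concatenate the curve $\eta_k^\bw$, a copy of $\eta_i^\bw$, and the reverse of $\eta_k^\bw$, smoothing at the two junctions. Because the map $s$ of Definition \ref{def-many} records crossings and the relation $s_j^2=1$ holds in $\mathcal W$, the monomial of the spliced curve equals, after cancelling the backtracking pairs created at the junctions, exactly $g_k^\bw\big(g_i^\bw s_i (g_i^\bw)^{-1}\big)(g_k^\bw)^{-1}=r_k^\bw r_i^\bw r_k^\bw$ at the level of $\pi$. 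This establishes the identity $\pi(r_i^{\bw[k]})=\pi(s(\eta_i^{\bw[k]}))$ and shows that \emph{some} admissible curve always represents $r_i^{\bw[k]}$.

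The genuine difficulty, and the reason the statement is only conjectural, is the non-self-intersection. The spliced curve constructed above will in general cross itself: the inserted copy of $\eta_i^\bw$ may meet the two legs coming from $\eta_k^\bw$, and $\eta_i^\bw$ and $\eta_k^\bw$ may already cross one another. The hard part is to show that all such crossings can be removed by homotopy, that is, that every self-crossing bounds a bigon which can be excised, so that reducing the palindromic crossing word via $s_j^2=1$ corresponds precisely to pulling the curve tight to its minimal-crossing representative. Controlling this is exactly where the hypothesis $b_{ik}^\bw c_k^\bw>0$ and the sign-coherence of $c$-vectors must enter: the sign condition should force $\eta_i^\bw$ and $\eta_k^\bw$ to meet in a \emph{coherent} manner (intersections all of the same type), which is what permits bigon removal and precludes the creation of essential self-intersections in $\eta_i^{\bw[k]}$.

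Finally, I would close the argument by tying the crossing data back to the $L$-matrix. Since the $L$-matrix arises as the leading terms of the $C$-matrix (as established earlier), the $l$-vector $l_i^\bw=g_i^\bw(\alpha_i)$ of Definition \ref{def-ell} encodes both the turning arc of $\eta_i^\bw$ and its crossing sequence; tracking how $l_i^{\bw[k]}$ is obtained from $l_i^\bw$ and $l_k^\bw$ under mutation should furnish a combinatorial certificate that the reduced word for $r_i^{\bw[k]}$ is a palindrome of the correct length, and hence that its minimal curve is embedded. The main obstacle throughout remains the bigon and self-crossing control of the preceding paragraph: the algebra of the reflections and the monomial identity are a routine induction, but the passage from \emph{realizable by a curve} to \emph{realizable by a non-self-intersecting curve} is the crux, and is precisely where one would have to extend the methods of Felikson--Tumarkin beyond the acyclic setting.
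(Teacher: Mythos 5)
You have not proved the statement, and neither does the paper: the statement is Conjecture~\ref{vague_conj-2}, which the paper leaves open, supporting it only with examples (Examples~\ref{exa-1} and~\ref{exa-all}) and with the Felikson--Tumarkin theorem \cite{FT} covering the $2$-complete acyclic case of the earlier conjecture of \cite{LL}. Your proposal is likewise not a proof, and you concede as much: the base case and the identity $\pi(r_i^{\bw[k]})=\pi\left(s(\eta_i^{\bw[k]})\right)$ for \emph{some} admissible curve are immediate (any odd palindromic word in $\mathfrak R$ is the crossing word of an admissible curve, so the literal word of $r_i^{\bw[k]}$ is always realizable), while the entire content of the conjecture --- that the curve can be chosen non-self-intersecting --- is deferred to a hoped-for bigon-removal argument which you do not carry out. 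An induction whose inductive step reads ``the sign condition should force coherent intersections'' establishes nothing.

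More importantly, the specific mechanism you propose is refuted by the paper's own Example~\ref{exa-pi-nec}. Your splicing construction produces, after cancelling the backtracking pairs via $s_j^2=1$, exactly the reduced word of $r_i^{\bw[k]}$ in the universal Coxeter group $\mathcal W$; so your strategy of ``pulling the curve tight'' can only ever reach curves whose crossing word is that reduced word. But in Example~\ref{exa-pi-nec} the word $r_4^\bw$ yields a self-intersecting curve for \emph{every} $\sigma\in S_4$, and an embedded representative exists only after replacing it by a different word equal to it under $\pi$ --- there via $\pi\left((s_3s_4)^3\right)=1$ --- a relation valid in $\pi(\mathcal W)$ for the given GIM but false in $\mathcal W$, hence unreachable by free cancellation or bigon removal. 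Any genuine proof must therefore decide, at each mutation step, which GIM-dependent relations in $\pi(\mathcal W)$ to invoke; that is precisely the open problem, and it is why the conjecture is stated with $\pi$ rather than as an identity in $\mathcal W$. Your closing claim that the reduced word being ``a palindrome of the correct length'' forces its minimal curve to be embedded is a non sequitur: palindromicity is exactly condition (4) of Definition~\ref{def-adm}, satisfied by every $r_i^{\bw}$, including the one in Example~\ref{exa-pi-nec} that admits no embedded representative of that word.
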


\begin{Conj}\label{vague_conj-3}
For any skew-symmetrizable matrix $B$, there exists a linear ordering $\prec$ and its associated GIM $A$ such that if $\bw$ and $\bv$ are two mutation sequences with $C^\bw=C^\bv$ then
$\pi(r_i^\bw) =\pi(r_i^\bv), \, i=1,\dots,n.$
\end{Conj}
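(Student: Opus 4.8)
The plan is to reduce Conjecture~\ref{vague_conj-3} to a statement purely about the $L$-matrix and then to feed in standard cluster-algebraic input. First I would record that $\pi$ acts by isometries of the symmetric bilinear form $(\cdot,\cdot)$ on $\Gamma$ given by $(\alpha_i,\alpha_j)=a_{ij}d_j$, which is symmetric precisely because $AD$ is. Indeed $\pi(s_i)\colon \alpha_j\mapsto \alpha_j-a_{ji}\alpha_i$ is the orthogonal reflection in $\alpha_i$, since $2(\alpha_j,\alpha_i)/(\alpha_i,\alpha_i)=a_{ji}$; hence every $\pi(w)$, $w\in\mathcal W$, is an isometry. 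Writing $r_i^\bw=g_i^\bw s_i(g_i^\bw)^{-1}$ and $l_i^\bw=g_i^\bw(\alpha_i)$, a one-line computation using that $\pi(g_i^\bw)$ is an isometry shows that $\pi(r_i^\bw)$ is the orthogonal reflection in $l_i^\bw$, a vector of nonzero norm $(l_i^\bw,l_i^\bw)=(\alpha_i,\alpha_i)=2d_i$. Since two reflections agree iff their roots are proportional, and here the roots have equal norm, this yields the equivalence $\pi(r_i^\bw)=\pi(r_i^\bv)\iff l_i^\bw=\pm l_i^\bv$. Thus the conjecture is equivalent to the assertion that the rows of $L^\bw$ are seed invariants up to sign: $C^\bw=C^\bv$ must force $l_i^\bw=\pm l_i^\bv$ for all $i$.

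Next I would set up the recursion that controls the $l$-vectors. Conjugating \eqref{def-sx_i-1} and using that $r_k^\bw$ is an involution, the natural inductive choice is $g_i^{\bw[k]}=r_k^\bw g_i^\bw$ when $b_{ik}^\bw c_k^\bw>0$, so that $l_i^{\bw[k]}=\pi(r_k^\bw)(l_i^\bw)$, the reflection of $l_i^\bw$ in $l_k^\bw$; otherwise $l_i^{\bw[k]}=l_i^\bw$. In particular the case $i=k$ gives $l_k^{\bw[k]}=l_k^\bw$ while the corresponding $c$-vector negates, which is exactly the origin of the unavoidable sign ambiguity. The aim is then a \emph{reconstruction lemma}: to promote the result quoted in the introduction — that the $L$-matrix consists of the leading terms of the $C$-matrix presented through reflections — to an \emph{inversion}, expressing each $l_i^\bw$ up to sign as an explicit function of the seed data $(B^\bw,C^\bw)$ and the fixed GIM $A$. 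Concretely I would try to establish a relation of the form $L^\bw=S^\bw\,C^\bw\,M(B^\bw,A)$, with $S^\bw$ a diagonal sign matrix and $M$ a universal matrix depending only on $B^\bw$ and $A$, proved by induction on the length of $\bw$: one checks that the reflection recursion above intertwines with the $C$-matrix mutation rule, both altering row $i$ by a multiple of row $k$ exactly when $b_{ik}^\bw c_k^\bw>0$, and that the reflection coefficient $-2(l_i^\bw,l_k^\bw)/(l_k^\bw,l_k^\bw)$ agrees with the $c$-vector coefficient up to the sign booked in $S^\bw$.

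Granting such an inversion, the conjecture follows from input independent of our construction. By tropical duality (Nakanishi--Zelevinsky), the $C$-matrix determines the $G$-matrix and hence the whole seed, so $C^\bw=C^\bv$ forces $B^\bw=B^\bv$. Then $M(B^\bw,A)=M(B^\bv,A)$ together with $C^\bw=C^\bv$ gives $L^\bw=S^\bw (S^\bv)^{-1}L^\bv$, i.e. $l_i^\bw=\pm l_i^\bv$ for every $i$, which is what the reduction in the first paragraph requires.

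The main obstacle is the inversion step itself, together with the existence clause on $\prec$. The recursion couples all the $l$-vectors through the reflections $\pi(r_k^\bw)$, so even though the words $r_i^\bw\in\mathcal W$ genuinely depend on the mutation sequence, one must show their geometric leading data $l_i^\bw$ do not; the difficulty is that the ``presentation using reflections'' from which the leading terms are read off may itself be path dependent, so extracting $l_i^\bw$ from $C^\bw$ in a path-independent way is not automatic. The choice of ordering $\prec$ enters here, fixing $A$ and thereby the signs in $S^\bw$, and the real content of Conjecture~\ref{vague_conj-3} is that \emph{some} orientation renders these signs globally consistent across the entire exchange graph; showing that a single $\prec$ works simultaneously at all seeds, rather than just in rank two, is the crux. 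A fallback that avoids an explicit $M$ is to prove invariance directly on the exchange graph, reducing $C^\bw=C^\bv$ to the elementary moves $\mu_k^2=\mathrm{id}$ and the rank-two (finite-type) relations and verifying $l_i^\bw=\pm l_i^\bv$ on each; but this route must confront the absence of a complete generating set of exchange-graph relations for an arbitrary skew-symmetrizable matrix, which is itself a substantial difficulty.
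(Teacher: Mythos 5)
You should know at the outset that the paper does not prove this statement: it is an open conjecture there (Conjecture~\ref{vague_conj-3}, refined as Conjecture~\ref{conj-3}), supported only by examples (Example~\ref{exa-pi-wv}), noted to hold trivially in rank $3$ because each mutation sequence of a $3\times 3$ matrix produces a unique $C$-matrix \cite{Se2}, and explicitly deferred to ``a subsequent article.'' So there is no proof in the paper to compare against, and your proposal must stand on its own. Parts of it do stand: the bilinear form $(\alpha_i,\alpha_j)=a_{ij}d_j$ is indeed symmetric (the GIM of \eqref{eqn-gim} has the same symmetrizer $D$ as $B$), each $\pi(s_i)$ is the orthogonal reflection in $\alpha_i$, hence $\pi(r_i^\bw)$ is the orthogonal reflection in $l_i^\bw$, a vector of nonzero norm $2d_i$, and since a reflection in a non-isotropic vector determines that vector up to scalar, the equivalence $\pi(r_i^\bw)=\pi(r_i^\bv)\iff l_i^\bw=\pm l_i^\bv$ is correct. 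This cleanly identifies the conjecture with the assertion in the paper's abstract that the $L$-matrix depends on the seed only up to signs of rows. The recursion $l_i^{\bw[k]}=\pi(r_k^\bw)(l_i^\bw)$ when $b_{ik}^\bw c_k^\bw>0$ (and $l_i^{\bw[k]}=l_i^\bw$ otherwise) is a correct reading of \eqref{def-sx_i-1}, and the appeal to tropical duality \cite{NZ} (legitimate given the sign-coherence theorem of \cite{GHKK}) to deduce $B^\bw=B^\bv$ from $C^\bw=C^\bv$ is sound.

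The gap is that the entire argument rests on the ``reconstruction lemma'' $L^\bw=S^\bw\,C^\bw\,M(B^\bw,A)$, which you do not prove, and which after your own reduction is not a lemma but a restatement (indeed a strengthening) of the conjecture: any assertion that $l_i^\bw$ is recoverable up to sign from the seed data $(B^\bw,C^\bw)$ is exactly what must be shown. The induction you sketch is precisely where path-independence has to be established and is not carried out; note that the two recursions update row $i$ under the same condition, but the $C$-row changes by $c_i^\bw+|b_{ik}^\bw|\,c_k^\bw$ (see \eqref{eqn-def-cvec}) while the $L$-row changes by $l_i^\bw-\frac{2(l_i^\bw,l_k^\bw)}{(l_k^\bw,l_k^\bw)}\,l_k^\bw$, and no relation between the pairing $(l_i^\bw,l_k^\bw)$ and $b_{ik}^\bw$ is available — the paper's Theorem~\ref{vague_conj-1} only relates $r_i^\bw$ to $s_i^\bw$ modulo $2\mathcal A$, which loses exactly this quantitative information. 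The specific linear Ansatz is also doubtful: in Example~\ref{exa-pi-wv} the rows of $L^\bw$ are not sign-coherent while the rows of every $C$-matrix are, so $M$ would have to be nontrivial there, and nothing suggests a single matrix $M(B^\bw,A)$ serves all seeds sharing the same $B^\bw$. Finally, your argument never uses any property of the ordering $\prec$; since the paper's refined Conjecture~\ref{conj-3} imposes a genuine combinatorial condition on $\prec$ (an odd number of positive entries $a_{i_j,i_{j+1}}$ along every oriented chordless cycle), a purported proof that is uniform in $\prec$ should itself raise suspicion. As you concede in your last paragraph, both the inversion step and the fallback via exchange-graph relations are unresolved, so what you have is a reasonable framing of the problem — consistent with the paper's own point of view — rather than a proof.
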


For any acyclic skew-symmetrizable matrix, choosing a linear ordering where $i \prec j$ if and only if $b_{ij}<0$ yields a GIM that is a Cartan matrix by \eqref{eqn-gim}. In this case, Conjecture~\ref{vague_conj-3} has been proven in \cite{ST} using some results from categorification of cluster algebras.

\medskip

As the main result of this paper, we show that the reflections $r_i^\bw$ naturally arise in connection with the $C$-matrix. It also justifies potential importance of the matrix $L^\bw$. 
The key idea is to maintain that we should have a ``root system'' for each mutation sequence $\bw$ as in the acyclic case. More precisely, we 
choose a linear ordering $\prec$ and its associated GIM, and inductively define  an $n$-tuple of elements $s_i^\bw \in \mathcal A$ and an $n$-tuple of vectors $\lambda_i^\bw \in \mathbb Z^n \,( \cong \Gamma)$, $i=1,2, \dots, n$,
 so that the following formulae hold:
\begin{align}\label{formulae} 
s_i^\bw(\lambda_j^\bw)& = \begin{cases} \lambda_j^\bw +b_{ji}^\bw \lambda_i^\bw & \text{ if } i \prec j, \\ -\lambda_j^\bw & \text{ if } i =j, \\\lambda_j^\bw -b_{ji}^\bw \lambda_i^\bw & \text{ if } i \succ j ,  \end{cases} 
\end{align}
where $B^\bw=[b_{ij}^\bw]$.
We denote by $\Lambda^\bw$ the matrix whose rows are $\lambda_i^\bw$. 
\begin{Thm}\label{vague_conj-1}
Fix a linear ordering $\prec$ on $\{1,2, \dots, n \}$ to obtain its associated GIM $A$. Then, for each mutation sequence $\bw$, we have
\[ \Lambda^\bw =C^\bw.\] Moreover, \[ s_i^\bw \equiv r_i^\bw \quad (\mathrm{mod}\ 2 \mathcal A), \quad i=1,2,\dots , n.\]  
\end{Thm}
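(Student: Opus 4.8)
The plan is to argue by induction on the length $\ell$ of the mutation sequence $\bw$, carrying along simultaneously three assertions: (i) the formulae \eqref{formulae} hold for $\bw$, (ii) $\Lambda^\bw = C^\bw$, and (iii) $s_i^\bw \equiv r_i^\bw \pmod{2\mathcal A}$ for all $i$. For the base case $\bw=\emptyset$ one takes $s_i^\emptyset = s_i$ and $\lambda_i^\emptyset = \alpha_i$, so that $\Lambda^\emptyset = I = C^\emptyset$ and (iii) is trivial. The formulae \eqref{formulae} then reduce to the defining action $\pi(s_i)(\alpha_j)=\alpha_j - a_{ji}\alpha_i$, which matches \eqref{formulae} precisely because $A$ was built from $B$ through $\prec$ via \eqref{eqn-gim}: the cases $i\prec j$, $i=j$, $i\succ j$ of \eqref{formulae} correspond exactly to $a_{ji}=-b_{ji}$, $a_{ii}=2$, $a_{ji}=b_{ji}$.

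For the identity $\Lambda^\bw=C^\bw$, I would first rewrite the $C$-matrix mutation in a form exposing the same branching used to define $r_i^\bw$. Applying the extended-matrix mutation rule to $[B^\bw\,|\,C^\bw]$ gives, for $i\neq k$, $c_i^{\bw[k]} = c_i^\bw + \operatorname{sgn}(b_{ik}^\bw)\,[\,b_{ik}^\bw c_k^\bw\,]_+$, while $c_k^{\bw[k]}=-c_k^\bw$. By sign-coherence the entries of $c_k^\bw$ share a common sign, so the per-coordinate condition $b_{ik}^\bw c_{kj}^\bw>0$ is independent of $j$ and coincides with the vector condition $b_{ik}^\bw c_k^\bw>0$ of Definition~\ref{def-r}. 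Hence $c_i^{\bw[k]}=c_i^\bw+|b_{ik}^\bw|\,c_k^\bw$ exactly when $b_{ik}^\bw c_k^\bw>0$, and $c_i^{\bw[k]}=c_i^\bw$ otherwise. Granting that the $\lambda_i^\bw$ are constructed to obey this same recursion -- which is secured by the consistency check of the next step -- the equality $\Lambda^{\bw[k]}=C^{\bw[k]}$ follows from $\Lambda^\bw=C^\bw$.

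The heart of the argument, and the step I expect to be the main obstacle, is propagating the formulae \eqref{formulae} through a single mutation. The difficulty is a genuine tension: the signs on the right-hand side of \eqref{formulae} are dictated by the fixed ordering $\prec$, whereas the coefficients $b_{ji}^\bw$ change sign freely as $\bw$ grows and mutation does not respect $\prec$-acyclicity. To handle this I would compute $\pi(s_i^{\bw[k]})(\lambda_j^{\bw[k]})$ directly: substitute the $c$-vector mutation for each of $\lambda_i^{\bw[k]},\lambda_j^{\bw[k]},\lambda_k^{\bw[k]}$, use \eqref{formulae} for $\bw$ to evaluate the constituent reflections on the $\lambda^\bw$, and substitute $b_{ji}^{\bw[k]}=b_{ji}^\bw+\operatorname{sgn}(b_{jk}^\bw)[\,b_{jk}^\bw b_{ki}^\bw\,]_+$ on the target side. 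One then checks the two sides agree in every case of the relative $\prec$-order of $\{i,j,k\}$ together with the tropical sign of $c_k^\bw$ and the branching conditions. It is precisely here that the passage from $\mathcal W$ to the larger algebra $\mathcal A$ is essential: the projections $e_i$ let $s_i^\bw$ realize the $\prec$-signed reflection of \eqref{formulae} for the mutated matrix, which a bare Coxeter reflection cannot do, and reconciling these sign corrections against the mutation rule is the technical core.

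Finally, the congruence $s_i^{\bw[k]}\equiv r_i^{\bw[k]}\pmod{2\mathcal A}$ follows formally. Reduction modulo the two-sided ideal $2\mathcal A$ is a ring homomorphism; the recursion \eqref{def-sx_i-1} for $r_i^\bw$ and the recursion for $s_i^\bw$ share the identical branching condition $b_{ik}^\bw c_k^\bw>0$; and the sign-correcting terms built from the $e_j$ that distinguish $s_i^\bw$ from the pure reflection $r_i^\bw$ lie in $2\mathcal A$. Thus, granting $s_i^\bw\equiv r_i^\bw$ and $s_k^\bw\equiv r_k^\bw$ inductively, reducing both recursions modulo $2\mathcal A$ yields the same element, which completes the induction.
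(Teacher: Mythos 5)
Your overall strategy is the same as the paper's: a simultaneous induction on the length of $\bw$ carrying (C1) $\Lambda^\bw=C^\bw$, (C2) the formulae \eqref{formulae}, and (C3) the congruence; the rewriting of the $C$-matrix mutation via sign coherence so that its branching condition becomes $b_{ik}^\bw c_k^\bw>0$; a case analysis over the relative $\prec$-order of $i,j,k$ and the sign of $c_k^\bw$ for the inductive step; and the observation that (C3) follows by reducing both recursions mod $2\mathcal A$, since the correction terms carry a factor of $2$ and the branching conditions coincide once (C1) and (C2) are available. All of this matches the paper's proof, including the point that converting the $\prec$-phrased conditions into $b_{ik}^\bw c_k^\bw>0$ genuinely uses (C1) and (C2).

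There is, however, a concrete gap in how you set up the induction: your hypothesis (i), ``the formulae \eqref{formulae} hold for $\bw$,'' is too weak to run the core computation you describe. The elements $s_i^{\bw[k]}$ and $\tau_i^{\bw}$ are not words in the reflections $s_j^\bw$ alone; by their inductive definitions they involve the idempotents $e_j^\bw$ (through $e_{\tau,i}^{\bw}$, $e_{s,i}^{\bw[k]}$, and through $e_j^{\bv[k]}=\tau_k^\bv e_j^\bv \tau_k^\bv$). To evaluate $\pi(s_i^{\bw[k]})$ or $\pi(\tau_k^{\bw})$ on the vectors $\lambda_j^{\bw}$ you must know how each $e_j^\bw$ acts on each $\lambda_i^\bw$, and \eqref{formulae} says nothing about this; indeed even your step (ii), computing $\lambda_i^{\bw[k]}=\tau_k^\bw(\lambda_i^\bw)$, already requires it. The paper repairs exactly this in two ways: its statement (C2) contains the second clause $e_i^\bw(\lambda_j^\bw)=\delta_{ij}\lambda_j^\bw$, carried through the induction alongside the reflection formulae, and before the main induction it proves a separate proposition (by its own induction) that the $s_i^\bw,e_i^\bw,\tau_i^\bw$ satisfy the $\mathcal A$-type relations \eqref{vava-1}--\eqref{vava-6} ($\sum_i e_i^\bw=1$, $e_i^\bw e_j^\bw=\delta_{ij}e_i^\bw$, $\tau_i^\bw\tau_i^\bw=1$, $e_i^\bw\tau_j^\bw=e_i^\bw$ for $i\neq j$, etc.), which are used constantly in the case analysis, for instance to cancel $\tau_k^\bv\tau_k^\bv$ when computing $e_i^{\bv[k]}(\lambda_j^{\bv[k]})$. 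With the induction hypothesis strengthened to include the idempotent action and with that proposition established, your outline coincides with the paper's argument, whose remaining content is the six-case verification that you defer.
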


As one can see from the flow chart in Table \ref{tab-1}, the definitions of $s_i^\bw$ and $\lambda_i^\bw$ are somewhat convoluted and heavily depend on $\prec$. Nevertheless, in the end, we obtain $C^\bw$ and $r_i^\bw$ which do not depend on $\prec$. Moreover, this process reveals that $r_i^\bw$ are certain leading terms in $s_i^\bw$. 
Since $s_i^\bw$ are related to $\lambda_i^\bw$ and $r_i^\bw$ to $l_i^\bw$, the $l$-vectors $l_i^\bw$ can be considered as ``leading terms'' of the $c$-vectors $c_i^\bw(=\lambda_i^\bw)$. What  
 Conjectures \ref{vague_conj-2} and \ref{vague_conj-3} claim is that these leading terms carry essential information. 
 
\medskip

To illustrate Theorem \ref{vague_conj-1}, we present Example~\ref{exa-1} below. Conjecture~\ref{vague_conj-2} will be checked for this example in Example~\ref{exa-pr} after an admissible curve is defined. Conjecture~\ref{vague_conj-3} is trivially satisfied for this matrix since its exchange graph is a tree (see \cite{Se2}) and thus $C^\bv = C^\bw$ does not occur (unless $\bv$ and $\bw$ differ only by repeated mutations $[i,i]$ at the same index). A non-trivial example of Conjecture~\ref{vague_conj-3} is given in Example~\ref{exa-pi-wv}.

\begin{Exa} \label{exa-1}

Consider the skew-symmetrizable matrix $B= {\small \begin{bmatrix}
0&3&-3\\-2&0&2\\2&-2&0
\end{bmatrix}}$ with the symmetrizer $D=\mathrm{diag}(3,2,2)$, and the sequence of consecutive mutations at indices $2,3,2,1,2$:
\begin{center}
 {\small $\begin{bmatrix}
B & I \end{bmatrix}$} \qquad $\xrightarrow{\quad [2,3,2,1,2] \quad }$ \qquad 
{\small $\begin{bmatrix}
0&-3&9&5&18&15\\2&0&-4&-2&-7&-6\\-6&4&0&0&-2&-1
\end{bmatrix}$}
\end{center}
Thus we have obtained three $c$-vectors $(5,18,15)$, $(-2,-7,-6)$ and $(0,-2,-1)$. 

We take the linear ordering $1 \succ 2 \succ 3$. Then its GIM $A$ and the symmetrized matrix $AD$ are as follows: 
\[ \small A= \begin{bmatrix} 2&-3&3\\-2&2&-2\\2&-2&2 \end{bmatrix},  \qquad AD= \begin{bmatrix} 6&-6&6\\-6&4&-4\\6&-4&4 \end{bmatrix}. \]
In accordance with \eqref{eqn-quad}, define a quadratic form by
\[ q(x,y,z)=6x^2+4y^2+4z^2-12xy-8yz+12zx .\]
Then we have
\[ q(5,18,15)=6, \quad  q(-2,-7,-6)=4, \quad q(0,-2,-1)=4.  \]
Thus all three $c$-vectors are L{\"o}sungen for $A$.

% With respect to the simple L{\"o}sungen  $\alpha_1:=[1,0,0]^t$, $\alpha_2:=[0,1,0]^t$, $\alpha_3:=[0,0,1]^t$, we obtain 
% \[{\small  s_1=\begin{bmatrix} -1&2&-3\\0&1&0\\0&0&1 \end{bmatrix}, \quad  s_2=\begin{bmatrix} 1&0&0\\2&-1&3\\0&0&1 \end{bmatrix}, \quad  s_3=\begin{bmatrix} 1&0&0\\0&1&0\\-2&2&-1 \end{bmatrix}.} \] 
From Definition~\ref{def-r}, we obtain 
% \[ s_1^\bv= s_3s_2 s_1 s_2 s_3 s_2 s_1 s_2 s_3 s_2 s_3 s_2, \quad  
% s_2^\bv= s_2 s_3 s_2 s_3 s_2, \quad 
% s_3^\bv= s_1 s_2 s_3 s_2 s_3 s_2 s_3 s_2 s_1 ,\]
\[ r_1^\bv =s_3 s_2 s_1 s_2 s_3 s_2 s_3 s_2 s_1 s_2 s_3 s_2 s_3 s_2 s_1 s_2 s_3, \quad  
r_2^\bv = s_3 s_2 s_1 s_2 s_3 s_2 s_3 s_2 s_1 s_2 s_3, \quad 
r_3^\bv = s_2 s_3 s_2,\]
where $\bv$ is the mutation sequence $[2,3,2,1,2]$.
For the GIM $A$, Definition~\ref{def-ell} gives rise to the $l$-vectors 
\begin{align*} l_1^{\bv} & = s_3 s_2 s_1 s_2 s_3 s_2 s_3 s_2 (\alpha_1)=(5,18,15), \\ l_2^{\bv} & = s_3 s_2 s_1 s_2 s_3 (\alpha_2)=(2,7,6), \quad l_3^{\bv}=  s_2  (\alpha_3)=(0,2,1). \end{align*}
On the other hand, following the definitions in Section \ref{sec-conj}, we obtain similar results for the $\lambda_i^\bw$. In particular,
\begin{align*} \lambda_1^\bv & = s_3 s_2 s_1 s_2 s_3 s_2 s_3 s_2 (\alpha_1)=(5,18,15), \\ \lambda_2^\bv & = -s_3 s_2 s_1 s_2 s_3 (\alpha_2)=(-2,-7,-6), \quad \lambda_3^\bv= - s_2  (\alpha_3)=(0,-2,-1). \end{align*}
Thus the matrix {\small $\Lambda^\bv=\begin{bmatrix}
5&18&15\\-2&-7&-6\\0&-2&-1
\end{bmatrix}$} equals the $C$-matrix. 

However, $l$-vectors will not always be equal to positive $c$-vectors. Indeed, they need not even be sign-coherent. For the choice of GIM $A' =  \begin{bmatrix} 2&3&-3\\2&2&2\\-2&2&2 \end{bmatrix}$ we see that \[ l_1^{A',\bv} =(149, -462 ,1341),\quad l_2^{A',\bv} =(-10,31 ,-90),\quad l_2^{A',\bv} =(0,-2,1).\]
\end{Exa}

\subsection{Organization of the paper} In Section \ref{sec-conj}, precise definitions will be made for the objects appeared in this introduction, and Conjectures \ref{vague_conj-2} and \ref{vague_conj-3} will be presented in a more refined way, and other examples will be given. In Section \ref{sec-results} the elements $s_i^\bw \in \mathcal A$ and the vectors $\lambda_i^\bw$ will be defined with a running example, and Theorem \ref{vague_conj-1} will be stated more precisely.  
In Section \ref{sec-proof-c1c2}, Theorem \ref{vague_conj-1} will be proven through induction. The main induction step consists of six different cases, each of which has a few subcases.

\subsection*{Acknowledgments} We are very grateful to Pavel Tumarkin, Ahmet Seven and anonymous referees for correspondences and comments, which substantially improved the exposition of this paper.

\section{Conjectures} \label{sec-conj}

In this section, we present our conjectures in a more precise way after making necessary definitions. 

\medskip

For a nonzero vector $c=( c_1 , \dots , c_n) \in \mathbb Z^n$, we define $c >0$ if all $c_i$ are non-negative, and  $c <0$ if all $c_i$ are non-positive. This induces a partial ordering $<$ on $\mathbb Z^n$. Define $|c|=( |c_1|, \dots,  |c_n| )$.

Assume that $M=[m_{ij}]$ is an $n \times 2n$ matrix of integers. Let $\mathcal I:= \{ 1, 2, \dots, n \}$ be the set of indices. For $\bw=[i_i, i_2, \dots , i_\ell]$, $i_j \in \mathcal I$, we define the matrix $M^\bw=[m_{ij}^\bw]$ inductively: the initial matrix is $M$ for $\bw=[\,]$, and assuming we have $M^\bw$, define the matrix $M^{\bw[k]}=[m_{ij}^{\bw[k]}]$ for $k \in \mathcal I$ with $\bw[k]:=[i_i, i_2, \dots , i_\ell, k]$
by  
\begin{equation} \label{eqn-mmuu} m_{ij}^{\bw[k]} = \begin{cases} -m_{ij}^\bw & \text{if  $i=k$ or $j=k$}, \\ m_{ij}^\bw + \mathrm{sgn}(m_{ik}^\bw) \, \max(m_{ik}^\bw m_{kj}^\bw,0) & \text{otherwise}, \end{cases}
\end{equation}
where  $\mathrm{sgn}(a) \in \{1, 0, -1\}$ is the signature of $a$. The matrix $M^{\bw[k]}$ is called the {\em mutation of $M^\bw$} at the index $k$.

Let $B=[b_{ij}]$ be an $n\times n$ skew-symmetrizable matrix  and $D=\mathrm{diag}(d_1, \dots, d_n)$ be its symmetrizer such that $BD$ is symmetric, $d_i \in \mathbb Z_{>0}$ and $\gcd(d_1, \dots, d_n)=1$.  Consider the $n\times 2n$ matrix $\begin{bmatrix}B&I\end{bmatrix}$ and
 a mutation sequence $\bw=[i_1, \dots ,i_k]$. After the mutations at the indices $i_1, \dots , i_k$ consecutively, we obtain $\begin{bmatrix}B^\bw&C^\bw\end{bmatrix}$. Write their entries as in \eqref{eqn-bcbc}. 
%\begin{equation}  B^\bw= \begin{bmatrix} b_{ij}^\bw \end{bmatrix}, \qquad
%C^\bw= \begin{bmatrix} c_{ij}^\bw \end{bmatrix} = \begin{bmatrix} c_1^\bw \\ \vdots \\ c_n^\bw \end{bmatrix},\end{equation}
%where $c_i^\bw$ are the row vectors.
%\begin{Def}
%The row vectors $c_i^\bw$ are called {\em $c$-vectors} of $B$ for any $i$ and $\bw$. 
%\end{Def}
It is well-known that the $c$-vector $c_i^\bw$ is non-zero for each $i$, and either $c_i^\bw >0$ or $c_i^\bw <0$  due to sign coherence of $c$-vectors  (\cite{DWZ-1, GHKK}).

Choose a linear ordering $\prec$ on the set $\mathcal I$, and define a GIM $A=[a_{ij}]$ by \eqref{eqn-gim}.
%\begin{equation} 
%a_{ij}= \begin{cases}  b_{ij}  & \text{ if } i \prec j , \\
%2 & \text{ if } i =j, \\ -b_{ij} & \text{ if } i \succ j . \end{cases}\ 
%\label{eqn-gim}
%\end{equation}
From Definition \ref{def-gim}, we have L\"osungen associated with $A$.
Set $\lambda_1=(1,0,\dots , 0), \ \lambda_2=(0,1,0, \dots, 0), \dots, \ \lambda_n=(0, \dots, 0, 1)$ to be a basis of $\mathbb Z^n$.  
Recall that we have defined the algebra $\mathcal A$ in the introduction.
%let $\mathcal A$ be the (unital) $\mathbb Z$-algebra generated by $s_i, e_i$, $i \in \mathcal I$, subject to the following relations:
%$$ s_i^2=1, \quad \sum_{i=1}^n e_i =1, \quad s_ie_i = -e_i, \quad e_is_j=  \begin{cases} s_i+e_i-1 &\text{if } i =j, \\ e_i &\text{if } i \neq j, \end{cases}  \quad e_ie_j=  \begin{cases} e_i &\text{if } i =j, \\ 0 & \text{if } i \neq j. \end{cases}$$
Define a representation $\pi:\mathcal A \rightarrow \mathrm{End}(\mathbb Z^n)$ by 
\begin{equation} \label{s_i-[]}
\pi(s_i)(\lambda_j) = \lambda_j-a_{ji} \lambda_i \quad \text{ and } \quad \pi(e_i)(\lambda_j)  = \delta_{ij}\lambda_i \qquad \text{ for } i,j \in \mathcal I, 
\end{equation} and by extending it through linearity, 
where $\delta_{ij}$ is the Kronecker delta. We will suppress $\pi$ when we write the action of an element of $\mathcal A$ on $\mathbb Z^n$. 
As before, denote by $\mathcal W$  the subgroup of the units of $\mathcal{A}$ generated by $s_i$, $i=1, \dots, n$.

\medskip

To introduce our geometric model\footnote{An alternative geometric model can be found in \cite{FT}.} for $c$-vectors, we need a Riemann surface  equipped with $n$ labeled curves as below.
Let $P_1$ and $P_2$ be two identical copies of a regular $n$-gon. 
For $\sigma \in S_n$, label the edges of each of the two  $n$-gons
 by $T_{\sigma (1)}, T_{\sigma (2)}, \dots , T_{\sigma (n)}$ counter-clockwise.

 On $P_i$ $(i=1,2)$, let $L_i$ be the line segment from the center of $P_i$ to the common endpoint of  $T_{\sigma (1)}$ and $T_{\sigma (2)}$. Later,  these line segments will only be used  to designate the end points of admissible curves and will not be used elsewhere.   Fix the orientation of every edge of $P_1$ (resp.  $P_2$) to be 
 counter-clockwise (resp. clockwise) as in the following picture. 
 \begin{center}
 \begin{tikzpicture}[scale=0.5]
\node at (2.5,-2.7){\tiny{$T_{\sigma (n-1)}$}};
\node at (1.5,2.6){\tiny{$T_{\sigma (1)}$}};
\node at (4,0){\tiny{$T_{\sigma (n)}$}};
\node at (-2.0,-2.7){\tiny{$T_{\sigma (n-2)}$}};
\node at (-2.2,2.5){\tiny{$T_{\sigma (2)}$}};
\node at (-3.2,0){\vdots};
\draw (0,0) +(30:3cm) -- +(90:3cm) -- +(150:3cm) -- +(210:3cm) --
+(270:3cm) -- +(330:3cm) -- cycle;
\draw [thick] (2.4,-0.2) -- (2.6,0)--(2.8,-0.2);
\draw [thick] (1.4,1.95) -- (1.3,2.25)--(1.6,2.25);  %arrow for sigma(2)
\draw [thick] (-1.0,2.2) -- (-1.3,2.2)--(-1.2,2.5);  %arrow for sigma(3)
\draw [thick] (-2.4,0.2) -- (-2.6,0)--(-2.8,0.2);   %arrow for ...
\draw [thick] (1.0,-2.2) -- (1.3,-2.2)--(1.2,-2.5);  %arrow for sigma(N)
\draw [thick] (-1.4,-1.95) -- (-1.3,-2.25)--(-1.6,-2.25);  %arrow for sigma(N-1)
\draw [thick] (0,0)--(0,3);  %L_1
\node at (0.6,1.2){\tiny{$L_1$}};
\end{tikzpicture}
\begin{tikzpicture}[scale=0.5]
\node at (-0.6,-1.2){\tiny{$L_2$}};
\draw [thick] (0,0)--(0,-3);  %L_2
\node at (2.8,-2.4){\tiny{$T_{\sigma (2)}$}};
\node at (1.5,2.9){\tiny{$T_{\sigma (n-2)}$}};
\node at (3.3,0){\vdots};
\node at (-1.9,-2.7){\tiny{$T_{\sigma (1)}$}};
\node at (-2.0,2.7){\tiny{$T_{\sigma (n-1)}$}};
\draw (0,0) +(30:3cm) -- +(90:3cm) -- +(150:3cm) -- +(210:3cm) --
+(270:3cm) -- +(330:3cm) -- cycle;
\draw [thick] (-2.4,-0.2) -- (-2.6,0)--(-2.8,-0.2);
\draw [thick] (-1.4,1.95) -- (-1.3,2.25)--(-1.6,2.25);  %arrow for sigma(2)
\draw [thick] (1.0,2.2) -- (1.3,2.2)--(1.2,2.5);  %arrow for sigma(3)
\draw [thick] (2.4,0.2) -- (2.6,0)--(2.8,0.2);   %arrow for ...
\draw [thick] (-1.0,-2.2) -- (-1.3,-2.2)--(-1.2,-2.5);  %arrow for sigma(N)
\draw [thick] (1.4,-1.95) -- (1.3,-2.25)--(1.6,-2.25);  %arrow for sigma(N-1)
\end{tikzpicture}
 \end{center}
 
Let $\Sigma_\sigma$ be the Riemann surface of genus $\lfloor \frac{n-1}{2}\rfloor$
obtained by gluing together the two $n$-gons with all the edges of the same label identified according 
to their orientations.  The edges of the $n$-gons become $n$ different curves in $\Sigma_\sigma$. If $n$ is odd, all the vertices of the two $n$-gons 
are identified to become one point in $\Sigma_\sigma$ and the curves obtained from the edges become loops. If $n$ is even, two distinct
 vertices are shared by all curves. Let ${T}={T}_1\cup\cdots{T}_n\subset \Sigma_\sigma$, and $V$ be the set of the vertex (or vertices) on ${T}$.  

Let $\mathfrak W$ be the universal Coxeter group of rank $n$, which is by definition isomorphic to the free product of $n$-copies of $\mathbb Z/2 \mathbb Z$, and let $\mathfrak R$ be the set of reflections in $\mathfrak W$. We will denote an element of $\mathfrak W$ as a word from the alphabet $\mathcal I=\{1,2,...,n\}$.  In particular, an element $\mathfrak v$ of $\mathfrak R$ can be written as  $\mathfrak v=i_1i_2 \cdots i_k$ such that $k$ is an odd integer and $i_{j}=i_{k+1-j}$ for all $j=1,2, \dots , k$.

\begin{Def} \label{def-adm} 
An \emph{admissible curve} is a continuous function $\eta:[0,1]\longrightarrow \Sigma_\sigma$ such that

1) $\eta(x)\in V$ if and only if  $x\in\{0,1\}$;

2) there exists $\epsilon>0$ such that $\eta([0,\epsilon])\subset L_1$ and $\eta([1-\epsilon,1])\subset L_2$;

3) if $\eta(x)\in {T}\setminus V$ then $\eta([x-\epsilon,x+\epsilon])$ meets ${T}$ transversally for sufficiently small $\epsilon>0$;

4) $\upsilon(\eta)\in \mathfrak R$, where $\upsilon(\eta):={i_1}\cdots {i_k} \in \mathfrak W$ is given by 
$$\{x\in(0,1) \ : \ \eta(x)\in {T}\}=\{x_1<\cdots<x_k\}\quad \text{ and }\quad \eta(x_\ell)\in T_{i_\ell}\text{ for }\ell\in\{1,...,k\}.$$ 

\end{Def}

We consider curves up to isotopy. When $i_p=i_{p+1}$, $1 \le p \le k-1$, for $\upsilon(\eta)={i_1}\cdots {i_k}$, the curve  $\eta$  is isotopic  to a curve $\eta_1$ with $\upsilon(\eta_1)=  {i_1}\cdots i_{p-1} i_{p+2} \cdots {i_k}$. If $\eta_1$ and $\eta_2$ are curves with $\upsilon(\eta_1) = {i_1}\cdots {i_k}$ and $\upsilon(\eta_2) = {j_1} \cdots {j_\ell}$, define their {\em concatenation} $\eta_1\eta_2$ to be a curve such that  $\upsilon(\eta_1\eta_2)= {i_1}\cdots {i_k} {j_1} \cdots {j_\ell}$. 

\begin{Exa} \label{exa-pr}
Continuing Example~\ref{exa-1},
we choose admissible curves $\eta_i^\bv$ on a triangulated torus $\Sigma_\sigma$ such that $r_i^\bv = s(\eta_i^\bv)$ and draw the curves in Figure~\ref{fig-vague1} to illustrate that they are non-self-intersecting. This verifies Conjecture \ref{vague_conj-2} for this example. (In this example, it is not necessary to go through $\pi$.) We also draw the curves on the universal cover of $\Sigma_\sigma$ in Figure~\ref{fig-vague2} to see that they have no pairwise intersections. 
\end{Exa}
\begin{figure}
\begin{subfigure}{0.30\textwidth}
\centering
\includegraphics[scale=0.42]{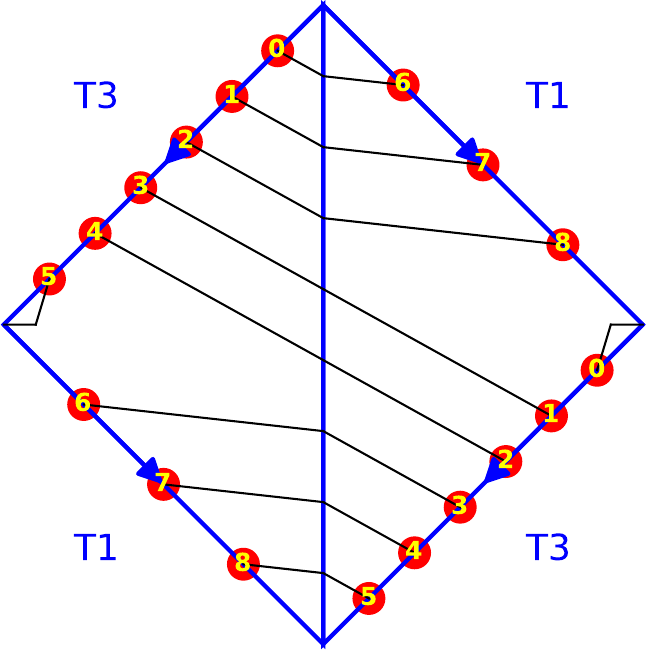}
\caption{The curve $\eta_1^\bv$.}
\end{subfigure}
\begin{subfigure}{0.30\textwidth}
\centering
\includegraphics[scale=0.42]{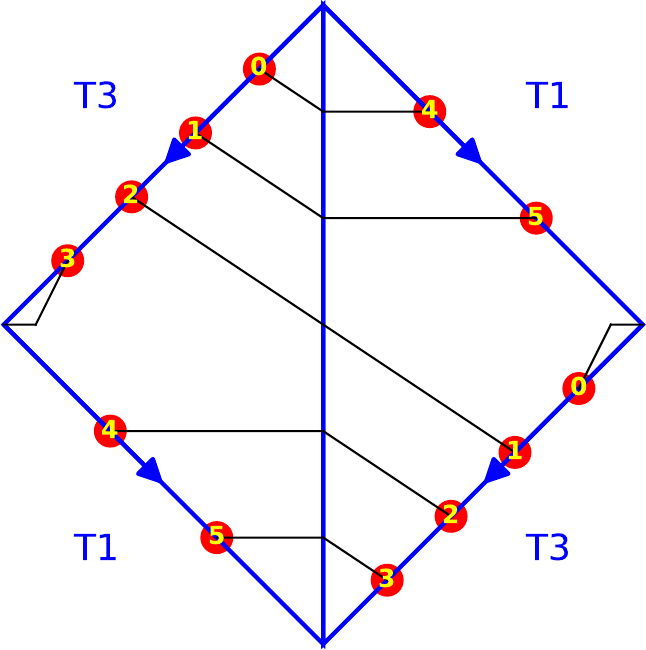}
\caption{The curve $\eta_2^\bv$.}
\end{subfigure}
\begin{subfigure}{0.30\textwidth}
\centering
\includegraphics[scale=0.42]{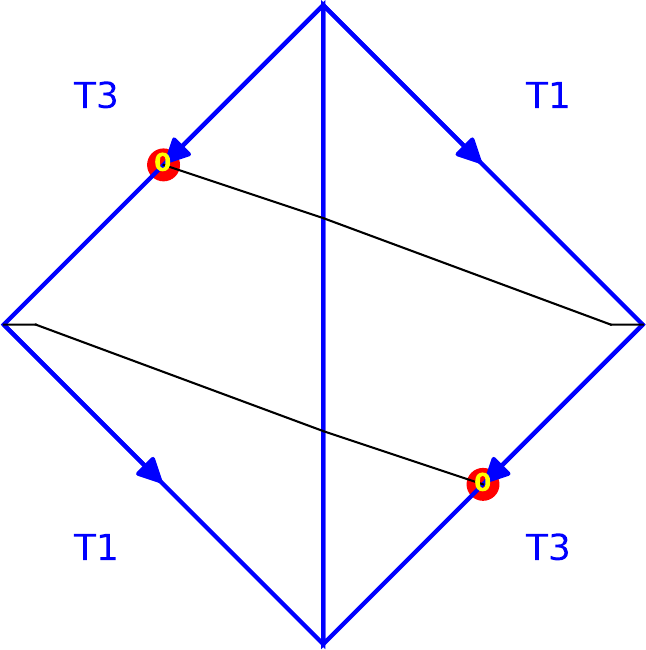}
\caption{The curve $\eta_3^\bv$.}
\end{subfigure}
\caption{The curves $\eta_i^\bv$ corresponding to Example~\ref{exa-1} displayed on $\Sigma_\sigma$ where $\sigma = (3,1,2) \in S_3$ written in one-line notation.} \label{fig-vague1}
\end{figure}
\begin{figure}
\begin{center}
\begin{tikzpicture}[scale=0.35mm]
\draw [help lines] (0,0) grid (7,4);
\draw [help lines] (0,1)--(1,0);
\draw [help lines] (0,2)--(2,0);
\draw [help lines] (0,3)--(3,0);
\draw [help lines] (0,4)--(4,0);
\draw [help lines] (1,4)--(5,0);
\draw [help lines] (2,4)--(6,0);
\draw [help lines] (3,4)--(7,0);
\draw [help lines] (4,4)--(7,1);
\draw [help lines] (5,4)--(7,2);
\draw [help lines] (6,4)--(7,3);
\draw [thick,red] (1,1)--(3,2);
\draw [thick,red] (1,1)--(0.79,0.92);  %endpoint
\draw [thick,red] (1,1)--(0.93,0.90);
\draw [thick,red] (6,3)--(6+0.09,3+0.08);  %endpoint
\draw [thick,red] (4,2)--(4+0.07,2+0.08); 
\draw [thick,red] (1.31,1.05)--(4-0.14,2-0.05);  %mid-line
\draw [thick,red] (1.16,1.03)--(6-0.16,3-0.03);
\draw[thick,red,scale=0.3,domain=4.00:6.52,smooth,variable=\t]
plot ({(0.2+0.05*\t)*cos(\t r)+3.32},{(0.2+0.05*\t)*sin(\t r)+3.32});
\draw[thick,red,scale=0.6,domain=3.50:6.47,smooth,variable=\t]
plot ({(0.2+0.05*\t)*cos(\t r)+1.66},{(0.2+0.05*\t)*sin(\t r)+1.66});
\draw[thick,red,scale=0.3,domain=6.89:9.71,smooth,variable=\t]
plot ({(0.03+0.05*\t)*cos(\t r)+20},{(0.03+0.05*\t)*sin(\t r)+10});
\draw[thick,red,scale=0.3,domain=7.00:9.72,smooth,variable=\t]
plot ({(0.03+0.05*\t)*cos(\t r)+13.33},{(0.03+0.05*\t)*sin(\t r)+6.66});
\end{tikzpicture}
\end{center}
\caption{The curves from Example~\ref{exa-1}. The shortest curve corresponds to $\eta_3^\bv$, and the longest one to $\eta_1^\bv$.}
\label{fig-vague2}
\end{figure}
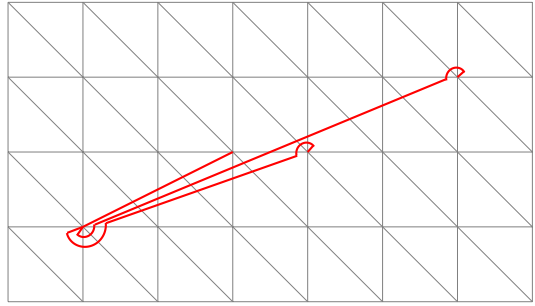

\begin{Def} \label{def-many}
For  $\mathfrak{v}=i_1i_2 \cdots i_k\in \mathfrak W$, define  $s(\mathfrak{v})=s_{i_1}...s_{i_k}\in \mathcal W \subset \mathcal A$.
We write $s(\eta)=s(\upsilon(\eta))$ for an admissible curve $\eta$.  
\end{Def}

Now we state Conjecture~\ref{vague_conj-2} in a more refined way.

\begin{Conj}[Conjecture~\ref{vague_conj-2}] \label{conj-main}
Fix an ordering on $\mathcal I$ so that a GIM $A$ is determined. Then, for each mutation sequence $\bw$, there exists a family of non-self-crossing admissible curves $\eta^{\bw }_i$, $i=1,\dots, n$, on the Riemann surface $\Sigma_\sigma$ for some $\sigma \in S_n$ such that 
$  \pi(r_i^\bw) = \pi \left (s(\eta_i^\bw) \right ).$
\end{Conj}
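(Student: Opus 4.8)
The plan is to induct on the length $\ell$ of the mutation sequence $\bw$, building the curves $\eta_i^\bw$ directly from the recursion for $r_i^\bw$ in Definition~\ref{def-r}. For the base case $\bw=[\,]$ one takes $\eta_i$ to be the embedded arc from $L_1$ to $L_2$ crossing $T_i$ exactly once and no other edge; then $\upsilon(\eta_i)=i$ and $s(\eta_i)=s_i=r_i$, so the claim holds with each curve non-self-crossing. For the inductive step, assume we have produced non-self-crossing admissible curves $\eta_i^\bw$ with $\pi(r_i^\bw)=\pi(s(\eta_i^\bw))$. When $b_{ik}^\bw c_k^\bw\le 0$ we set $\eta_i^{\bw[k]}=\eta_i^\bw$, which is already embedded and satisfies $r_i^{\bw[k]}=r_i^\bw$. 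The only substantive case is $b_{ik}^\bw c_k^\bw>0$, where $r_i^{\bw[k]}=r_k^\bw r_i^\bw r_k^\bw$; here the natural candidate is the concatenation $\eta_k^\bw\,\eta_i^\bw\,\eta_k^\bw$, whose word is $\upsilon(\eta_k^\bw)\,\upsilon(\eta_i^\bw)\,\upsilon(\eta_k^\bw)$, so that $s(\eta_i^{\bw[k]})=s(\eta_k^\bw)s(\eta_i^\bw)s(\eta_k^\bw)=r_i^{\bw[k]}$ by construction. Since each $\upsilon(\eta_i^\bw)$ is an odd palindrome (as $r_i^\bw$ is a conjugate of a simple reflection), the juxtaposed word is again a palindrome and represents a reflection in $\mathfrak W$, so $\eta_i^{\bw[k]}$ is admissible.

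With this construction the algebraic identity $\pi(r_i^{\bw[k]})=\pi(s(\eta_i^{\bw[k]}))$ is automatic from the word-level definition of concatenation together with the induction hypothesis, and requires no geometry. The entire difficulty is concentrated in showing that $\eta_k^\bw\,\eta_i^\bw\,\eta_k^\bw$ can be isotoped to an embedded curve. To attack this I would fix a complete hyperbolic metric on $\Sigma_\sigma$ (of genus $\lfloor (n-1)/2\rfloor$) and pass to geodesic representatives, so that any self-intersection produces an innermost bigon removable by isotopy; the goal becomes showing that the self-intersection number of the geodesic representative of $\eta_i^{\bw[k]}$ is zero. The crucial input must be the positivity hypothesis $b_{ik}^\bw c_k^\bw>0$: I would strengthen the induction hypothesis to record the geometric intersection pattern of the pair $(\eta_i^\bw,\eta_k^\bw)$ and show that under this hypothesis the pair meets in exactly the configuration for which the fold $\eta_k^\bw\,\eta_i^\bw\,\eta_k^\bw$ untangles, every apparent crossing bounding a bigon.

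To make this intersection bookkeeping usable, I would lift everything to the universal cover, as in Figure~\ref{fig-vague2}: the edges $T_1,\dots,T_n$ lift to a locally finite family of walls, an admissible curve lifts to an arc, and $\upsilon$ records the sequence of walls crossed. Embeddedness of $\eta_i^{\bw[k]}$ downstairs is equivalent to disjointness of all deck-translates of a chosen lift, and the palindromic structure of the word pins down the endpoints of the lift. Here Theorem~\ref{vague_conj-1} supplies the bridge to the combinatorics: since $\lambda_i^\bw=c_i^\bw$ and $s_i^\bw\equiv r_i^\bw \pmod{2\mathcal A}$, the signed crossing vector of $\eta_i^\bw$ with the walls is governed by the $c$-vector $c_i^\bw$, and sign-coherence of $c$-vectors should translate into the statement that the lift crosses each wall-class with a single sign, which is exactly the local condition ruling out bigons.

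The step I expect to be the genuine obstacle is this last one: converting the algebraic positivity $b_{ik}^\bw c_k^\bw>0$ into a clean geometric non-crossing statement that is stable under the induction. Conjugation of embedded curves by an embedded curve does not preserve embeddedness in general, so the argument cannot be purely formal; one must exploit the specific wall geometry of $\Sigma_\sigma$ in the universal cover together with the sign-coherence coming from Theorem~\ref{vague_conj-1}. This is precisely where our understanding remains incomplete, and it is the reason the statement is recorded as a conjecture rather than a theorem; a successful proof would most plausibly proceed by the minimal-position/bigon analysis sketched above, carried out along the same six-case induction used to establish Theorem~\ref{vague_conj-1}.
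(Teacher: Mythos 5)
There is no ``paper's own proof'' to compare against here: the statement you were asked to prove is Conjecture~\ref{conj-main}, which the paper leaves open, supporting it only with verified examples (Examples~\ref{exa-1} and~\ref{exa-all}) and with Example~\ref{exa-pi-nec}, which explains why the map $\pi$ must appear in the statement. Your proposal is consistent with that status --- you correctly frame it as a strategy and concede at the end that the decisive step is unresolved --- so it is not a proof, and could not honestly be one.

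However, the specific construction you propose is not merely incomplete; it is refuted by the paper's own Example~\ref{exa-pi-nec}. Your induction produces curves whose crossing words are, up to the allowed isotopy (cancellation of adjacent equal letters $i_p=i_{p+1}$), exactly the reduced words of $r_i^\bw$ in the free product: the base words are single letters and your concatenation step $\eta_i^{\bw[k]}=\eta_k^\bw\,\eta_i^\bw\,\eta_k^\bw$ mirrors the recursion \eqref{def-sx_i-1} defining $r_i^\bw$ verbatim. But for the matrix of Example~\ref{exa-pi-nec} and $\bw=[4,3,1,4,1]$, the word of $r_4^\bw$ contains $(s_4s_3)^3$, which admits no adjacent cancellations, and the paper records that the curve realizing this word has a self-intersection in $\Sigma_\sigma$ for \emph{every} $\sigma\in S_4$. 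So the self-crossing your bigon/minimal-position analysis would need to remove is essential at the word level: no amount of isotopy (which can only cancel adjacent equal letters) eliminates it. The only escape, as the paper demonstrates, is to replace the word by a different one in the same $\pi$-fiber, using $\pi\bigl((s_3s_4)^3\bigr)=1$ to pass to $\upsilon(\eta')=34132423143$. Your induction has no mechanism for such replacements --- the curves' words are rigidly determined by the recursion --- and your ``strengthened intersection-pattern hypothesis'' is not formulated so as to survive them, which is precisely where the real difficulty of the conjecture lives. A lesser issue of the same flavor: your induction fixes one surface $\Sigma_\sigma$ for all steps, while the conjecture allows $\sigma$ to depend on the mutation sequence, and the paper's examples do use different permutations for different matrices.
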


\begin{figure}
\begin{subfigure}{0.48\textwidth}
\centering
\includegraphics[scale=0.6]{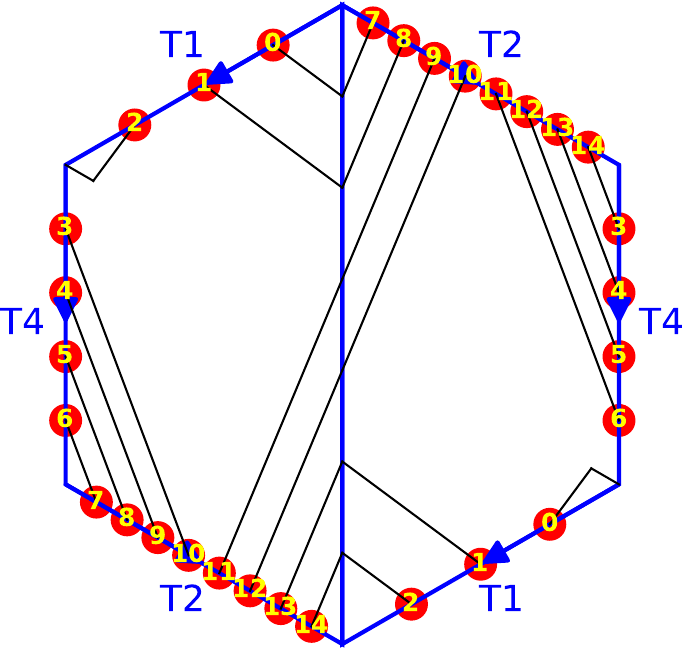}
\caption{The curve $\eta_1^\bv$.}
\end{subfigure}
\begin{subfigure}{0.48\textwidth}
\centering
\includegraphics[scale=0.6]{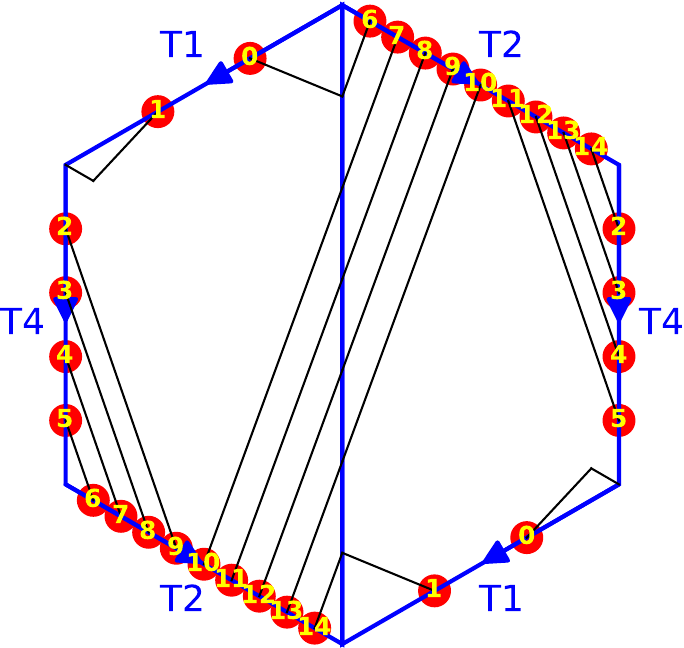}
\caption{The curve $\eta_2^\bv$.}
\end{subfigure}
\begin{subfigure}{0.48\textwidth}
\centering
\includegraphics[scale=0.6]{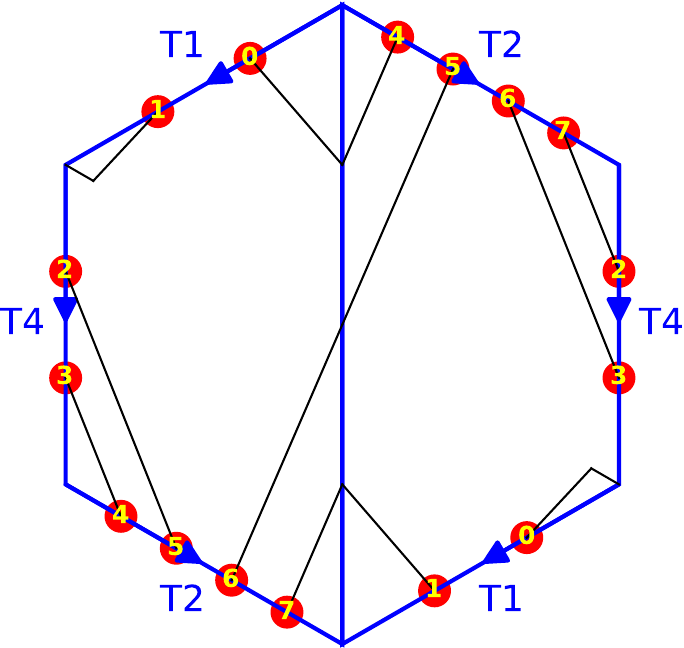}
\caption{The curve $\eta_3^\bv$.}
\end{subfigure}
\begin{subfigure}{0.48\textwidth}
\centering
\includegraphics[scale=0.6]{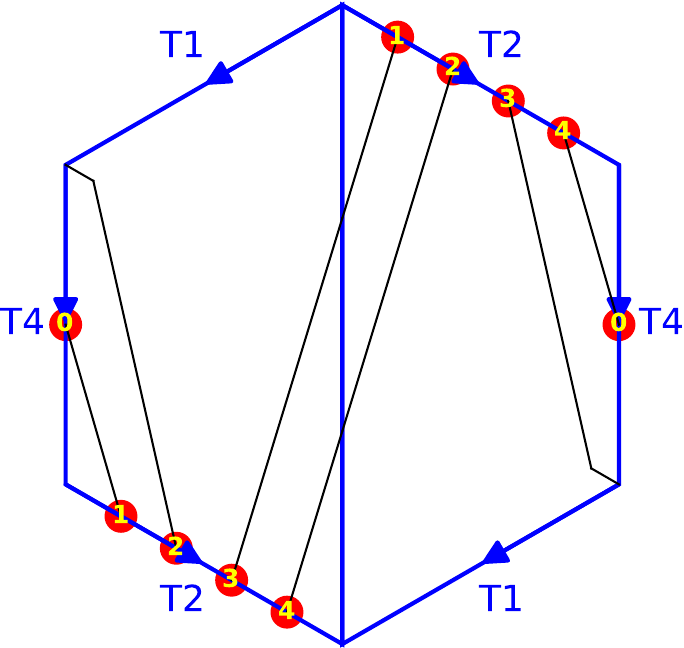}
\caption{The curve $\eta_4^\bv$.}
\end{subfigure}
\caption{The curves for Example~\ref{exa-all} drawn on $\Sigma_\sigma$ with $\sigma = (1,4,2,3)$.}\label{fig-rank4}
\end{figure}
\begin{Exa}\label{exa-all}
Consider the matrix $B = 
\begin{bmatrix}
0 & -1 & -1 & 2 \\
1 & 0 & 1 & -1 \\
1 & -1 & 0 & -1 \\
-2 & 1 & 1 & 0
\end{bmatrix}$.
 It arises from a triangulation of the torus with one boundary component with one marked point. It is commonly referred to as the {\em dreaded torus}. With the mutation sequence $\bw = [2, 3, 4, 2, 1, 3],$ we have 
$$\begin{bmatrix}
B & I \end{bmatrix} \qquad \xrightarrow{\ \bw \ } \qquad 
\begin{bmatrix}
0 & 1 & -1 & -1 & 0 & 2 & 3 & 2 \\
-1 & 0 & -1 & 2 & 2 & 3 & 3 & 2 \\
1 & 1 & 0 & -1 & -1 & -2 & -3 & -2 \\
1 & -2 & 1 & 0 & 0 & -2 & -2 & -1
\end{bmatrix}.$$

Choose the linear ordering $1\prec 3\prec 2\prec 4$. From Definition \ref{def-r}, we obtain
\begin{align*}
r_1^\bw & = s_{1}s_{3}(s_{2}s_{4}s_{2}s_{3})^2s_{1}(s_{3}s_{2}s_{4}s_{2})^2s_{3}s_{1} ,\\
r_2^\bw  &= s_{1}s_{3}(s_{2}s_{4}s_{2}s_{3})^2s_{2}(s_{3}s_{2}s_{4}s_{2})^2s_{3}s_{1}, \\
r_3^\bw &= s_{1}s_{3}s_{2}s_{4}s_{2}s_{3}s_{2}s_{4}s_{2}s_{3}s_{1}, \\
r_4^\bw &= s_{2}s_{3}s_{2}s_{4}s_{2}s_{3}s_{2}.
\end{align*}
In Figure~\ref{fig-rank4} we provide curves $\eta_i^\bw$ such that $s(\eta_i^\bw) = r_i^\bw$ for all $i \in \mathcal I.$ It is clear that they are non-self-intersecting on the surface $\Sigma_\sigma$ with $\sigma = (1,4,2,3) \in S_4$ written in one-line notation. By inspection these curves can be seen to be pairwise non-crossing.
\end{Exa}

In Example~\ref{exa-pi-nec} we show $\pi$ is necessary in Conjecture~\ref{conj-main} to avoid self-intersections.

\begin{Exa}\label{exa-pi-nec}
Consider the matrix $B = \begin{bmatrix}
0 & -2 & -2 & 3 \\
2 & 0 & 4 & 2 \\
2 & -4 & 0 & -1 \\
-3 & -2 & 1 & 0
\end{bmatrix}$. 
%For any choice of GIM we have $\pi( (s_3s_4)^3 ) = 1.$ 
Applying to the mutation sequence $\bw=[4,3,1,4,1]$ we have 
$$ r_4^\bw= s_{3}s_{4}s_{1}(s_{4}s_{3})^2s_{4}s_{2}(s_{4}s_{3})^3s_{4}s_{2}s_{4}(s_{3}s_{4})^2s_{1}s_{4}s_{3} .$$
Let $\eta$ be the curve defined by $s(\eta) = r_4^\bw$. Upon inspection, for any
$\sigma \in S_4$ the curve $\eta$ has a self-intersection in $\Sigma_\sigma.$
However, for any choice of GIM we have $\pi( (s_3s_4)^3 ) = 1$ so the curve $\eta'$ given by $\upsilon(\eta')=34132423143 \in \mathfrak W$ satisfies $\pi( r_4^\bw) = \pi(s(\eta'))$ and can be drawn with no self-intersections.
\end{Exa}

In order to refine Conjecture \ref{vague_conj-3}, we need a new definition.
A sequence of indices $(i_1,\dots,i_d)$ is said to be a {\em chordless cycle} in a skew-symmetrizable matrix $B$ if \begin{enumerate}
\item $i_j = i_k$ if and only if $\{ j,k \} = \{ 1,d \}$,
\item for any distinct $j,k \in \{1,\dots,d\}$ we have $b_{i_j,i_k} \neq 0$ if and only if $|j- k|=1$,
\end{enumerate}
Additionally, a chordless cycle is said to be {\em oriented} if and only if all entries $b_{i_j,i_{j+1}}$ for $j=1,\dots, d-1$ have the same sign.
Two chordless cycles are considered equivalent if they have the same underlying set of indices. 

\begin{Conj}[Conjecture \ref{vague_conj-3}] \label{conj-3}
Let $B$ be a skew-symmetrizable matrix. 
\begin{enumerate}
\item There exists a linear ordering $\prec$ on $\mathcal I$ such that every oriented chordless cycle $(i_1,\dots,i_d)$ in $B$ has an odd number of positive $a_{i_j,i_{j+1}}$, $j=1,\dots,d-1$, where $A=[a_{ij}]$ is the GIM determined by $\prec$. 

\item Fix an ordering $\prec$ and its GIM $A$ satisfying the condition in (1). If $\bw$ and $\bv$ are two mutation sequences such that $C^\bw=C^\bv$ then $\pi(r_i^\bw) =\pi(r_i^\bv), \,  i=1,\dots,n. $
\end{enumerate}
\end{Conj}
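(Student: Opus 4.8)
The plan is to prove both assertions simultaneously by induction on the length $\ell$ of the mutation sequence $\bw$, carrying along the formulae \eqref{formulae} as a third inductive hypothesis. Concretely, I would show that for every $\bw$ the triple of statements (a) the action of $s_i^\bw$ on the $\lambda_j^\bw$ obeys \eqref{formulae}, (b) $\lambda_i^\bw = c_i^\bw$ for all $i$ (equivalently $\Lambda^\bw = C^\bw$), and (c) $s_i^\bw \equiv r_i^\bw \pmod{2\mathcal A}$ hold together. The base case $\bw = [\,]$ is immediate: here $C^{[\,]} = I$ so $\lambda_i^{[\,]}$ is the $i$th standard basis vector, $s_i^{[\,]} = s_i = r_i^{[\,]}$, and \eqref{formulae} is exactly the defining action \eqref{s_i-[]} once one substitutes $a_{ji}$ from \eqref{eqn-gim}, since $a_{ji} = -b_{ji}$ when $i \prec j$ and $a_{ji} = b_{ji}$ when $i \succ j$.

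For the inductive step I pass from $\bw$ to $\bw[k]$ using the recursive definitions of $s_i^{\bw[k]}$ and $\lambda_i^{\bw[k]}$ from Section \ref{sec-results}, which are built to parallel Definition \ref{def-r}: when $b_{ik}^\bw c_k^\bw > 0$ one conjugates by $s_k^\bw$, and otherwise one leaves the object unchanged. The central simplification is sign coherence of $c_k^\bw$: because every entry of $c_k^\bw$ has one common sign, the $\max$ in the mutation rule \eqref{eqn-mmuu} collapses, and for $i \neq k$ the $c$-vector mutation reads $c_i^{\bw[k]} = c_i^\bw + |b_{ik}^\bw|\, c_k^\bw$ exactly when $b_{ik}^\bw c_k^\bw > 0$ and $c_i^{\bw[k]} = c_i^\bw$ otherwise, with $c_k^{\bw[k]} = -c_k^\bw$. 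Thus the condition governing the reflection recursion and the condition governing the $c$-vector recursion are literally the same. The heart of the step is then to verify \eqref{formulae} for $\bw[k]$: apply $s_i^{\bw[k]}$, written in terms of $s_i^\bw$ and $s_k^\bw$, to $\lambda_j^{\bw[k]}$, written in terms of $\lambda_j^\bw$ and $\lambda_k^\bw$, expand with the inductive formulae, and match the result against $b_{ji}^{\bw[k]}$ coming from the matrix mutation. Once \eqref{formulae} is re-established, identifying $\lambda_i^{\bw[k]}$ with the mutated $c$-vector gives (b), and the parallel recursion gives (c).

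This verification is where the six cases of Section \ref{sec-proof-c1c2} arise: one splits according to the relative $\prec$-order among $i$, $j$, and $k$ and the signs of $b_{ik}^\bw$, $b_{jk}^\bw$, and $c_k^\bw$, with a few subcases in each. In every case the computation rests on the skew-symmetrizable mutation identity $b_{ij}^{\bw[k]} = b_{ij}^\bw + \operatorname{sgn}(b_{ik}^\bw)\max(b_{ik}^\bw b_{kj}^\bw, 0)$ together with the reflection action \eqref{formulae}, and on the fact that $\lambda_k^{\bw[k]} = -\lambda_k^\bw$ flips the role of $k$ in the ordering. For the congruence (c) I would observe that $r_i^{\bw[k]}$ follows the very same conjugation pattern as $s_i^{\bw[k]}$, so it suffices to check that reduction modulo $2\mathcal A$ kills the projection/correction terms that distinguish $s_i^\bw$ from the pure word $r_i^\bw$; here the relations of $\mathcal A$ become $s_i e_i \equiv e_i$ and $e_i s_i \equiv s_i + e_i + 1 \pmod{2\mathcal A}$, and conjugation by $s_k^\bw$ preserves the congruence class.

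The main obstacle is the tension between the order-dependence of the construction and the order-independence of its output. The GIM $A$, the vectors $\lambda_i^\bw$, and the auxiliary elements $s_i^\bw$ all depend delicately on $\prec$ — in particular the reflection contributes $+b_{ik}^\bw$ when $i \prec k$ but $-b_{ik}^\bw$ when $i \succ k$ — yet $C^\bw$ and $r_i^\bw$ do not depend on $\prec$ at all. Keeping the sign bookkeeping consistent across the two regimes $i \prec k$ and $i \succ k$, and confirming that \eqref{formulae} is preserved on the nose rather than merely up to sign, is the delicate part; this is exactly what forces the case analysis to be as intricate as it is, and it is the step I expect to demand the most care.
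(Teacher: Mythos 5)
There is a genuine gap: what you have outlined is, almost step for step, the paper's proof of Theorem \ref{con-prec-1} (i.e.\ Theorem \ref{vague_conj-1}) --- the per-sequence induction establishing $\Lambda^\bw = C^\bw$, the formulae \eqref{formulae}, and the congruence $s_i^\bw \equiv r_i^\bw \pmod{2\mathcal A}$ --- but that theorem is a different statement from Conjecture \ref{conj-3}, and the paper itself leaves Conjecture \ref{conj-3} open (it is verified only in examples, and the authors explicitly defer its study, via relations in $\pi(\mathcal W)$, to a subsequent article). The conjecture compares two \emph{different} mutation sequences $\bw$ and $\bv$ with $C^\bw = C^\bv$ and asserts the exact equality $\pi(r_i^\bw) = \pi(r_i^\bv)$. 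Your induction only produces information about each sequence separately, and the bridge you would need --- passing from $s_i^\bw$ to $r_i^\bw$ --- is a congruence modulo $2\mathcal A$, not an equality. Even granting that $C^\bw = C^\bv$ forces $B^\bw = B^\bv$, so that \eqref{formulae} pins down $\pi(s_i^\bw) = \pi(s_i^\bv)$ on the common basis formed by the rows of $C^\bw$, the most this yields is $\pi(r_i^\bw) \equiv \pi(r_i^\bv) \pmod{2}$ entrywise. That congruence cannot be upgraded for free: Example \ref{ex-counterexa3} of the paper exhibits exactly this phenomenon, two elements of $\mathcal A$ congruent modulo $2\mathcal A$ (there, $-s_2s_4s_2 - 2s_2 + 2 + 2s_4s_2$ and $s_2s_4s_2$) whose matrix representations under $\pi$ are \emph{not} equal. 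And Example \ref{exa-pi-wv} shows that $r_i^\bw$ and $r_i^\bv$ are genuinely different words (of lengths $7$ and $17$ for $i=1$), so the claimed equality requires nontrivial relations in the group $\pi(\mathcal W)$, which is precisely the content the paper says remains to be established.

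A second, structural symptom of the same problem: your argument never uses the hypothesis in part (1) --- that every oriented chordless cycle carries an odd number of positive GIM entries --- and it never addresses part (1) itself (existence of such an ordering, for which the paper only gives a sufficient condition, Lemma \ref{lem-span-tree}, via a spanning tree meeting every oriented chordless cycle). Since the conjecture is formulated with that condition precisely because the conclusion is not expected to hold for an arbitrary GIM (it is a weakening of Seven's admissibility, and bad GIMs already destroy sign coherence of $l$-vectors, cf.\ the GIM $A'$ in Example \ref{exa-1}), any proof strategy that is insensitive to the condition cannot be correct as it stands: it would ``prove'' the statement for every choice of GIM. The honest conclusion is that your induction re-proves Theorem \ref{con-prec-1}, which relates $r_i^\bw$ to $c$-vectors as leading terms, but the seed-dependence claim of Conjecture \ref{conj-3} lies strictly beyond it.
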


The elements $\pi(r_i^\bw)$ can be viewed as elements of $\pi(\mathcal W)$, and Conjecture~\ref{conj-3} can be interpreted as a statement about relations in $\pi(\mathcal W).$ Relations for these groups have been explored for particular skew-symmetrizable matrices and  a restricted class of GIMs in \cite{BMa,FT2,Se3}.
A thorough investigation of relations in $\pi(\mathcal W)$ and their application to Conjecture~\ref{conj-3} will take place in a subsequent article. It is expected that all of the discovered relations will hold for any GIM satisfying the condition in Conjecture~\ref{conj-3} (1) which is a weaker than Seven's notion of admissibility \cite{Se,Se3}. 

\medskip

In Proposition \ref{prop:mutation-finite-ordering} below, we will prove Conjecture~\ref{conj-3} (1) for a special family using results in \cite{Se, Se1}. In discussing the notion of cycles we will briefly switch from the perspective of matrices to that of the directed graph.

\begin{Def}
Let $B$ be an $n \times n$ skew-symmetrizable matrix. Define $\mathcal{G}(B)$ to be the directed graph with vertices in $\mathcal I$ and arrows $i \rightarrow j$ for $b_{ij} <0$.
\end{Def}

Note that the definition of a chordless cycle for a matirx $B$ is equivalent to the standard definition of chordless cycle in the directed graph $\mathcal{G}(B)$.

\medskip

Now, for the time being, assume that $B=[b_{ij}]$ is a skew-symmetrizable matrix which can be mutated from an acyclic matrix $B_0$ through a mutation sequence $\bw$, i.e., assume $B=B_0^\bw$. Let $A_0$ be the generalized Cartan matrix associated with $B_0$, and define \begin{equation} \label{cbw}  A=[a_{ij}]:=C^\bw A_0 (C^\bw)^\top.\end{equation} Then, by \cite[Theorems 1.2]{Se1} (see also \cite{Se}), the matrix $A$ is a GIM such that $|a_{ij}|=|b_{ij}|$ for $i \neq j$ and
\begin{equation} \label{pro-n}
\text{every oriented chordless cycle of $\mathcal G(B)$ has exactly one edge $\{i,j\}$ such that $a_{ij} > 0.$}
\end{equation}

Let us consider the following conditions for $\mathcal G(B)$:
\begin{itemize}
\item[(AC1)] every oriented (not necessarily chordless) cycle has at least one edge $\{i,j\}$ such that $a_{ij} > 0$; 
\item[(AC2)] if an edge $\{i,j\}$ with $a_{ij}>0$ is contained in a cycle either oreinted or non-oriented, then it is also contained in an oriented chordless cycle.
\end{itemize}

\begin{Prop}\label{prop:mutation-finite-ordering}
Assume that $B$ is a skew-symmetrizable matrix which can be mutated from an acyclic matrix $B_0$. Let $A=[a_{ij}]$ be the GIM defined in \eqref{cbw}. Suppose that (AC1) and (AC2) hold. Then Conjecture~\ref{conj-3} (1) is true. 
\end{Prop}

\begin{proof}
It follows from \eqref{pro-n} that $A$ satisfies Conjecture~\ref{conj-3} (1) if it arises from a linear ordering. To this effect, let $\mathcal G= \mathcal G(B)$, and define $\mathcal G^\circ$ to be the graph obtained from $\mathcal G$ by reversing the directions of edges $\{i,j\}$ with $a_{ij} >0$. We will show that $\mathcal G^\circ$ is acyclic,  and define a relation $\prec$ on the set $\mathcal{I}$ of vertices as follows: \begin{equation*} \text{$i \prec j$ if there is a directed path $i=i_1 \rightarrow \dots \rightarrow i_p =j$ in $\mathcal G^\circ$.}
\end{equation*}
Then the relation $\prec$ will be a strict partial order on $\mathcal{I}$. 

Suppose that there is an oriented cycle $E_0=(i_0\rightarrow i_1\rightarrow \dots \rightarrow i_p=i_0)$ in $\mathcal G^\circ$. Then it is also a cycle in $\mathcal G$, but not necessarily oriented. We inductively define the sequence $E_0,E_1,...,E_p$ of oriented cycles in $\mathcal G^\circ$ as follows: Suppose that $E_d$ is defined for some $d\in\{0,1,...,p-1\}$.  If $a_{i_d, i_{d+1}} < 0$ then we define $E_{d+1}$ to be equal to $E_d$. Suppose that $a_{i_d, i_{d+1}} > 0$.
By (AC2), there must be an oriented chordless cycle $(i_d\rightarrow j_1\rightarrow j_2\rightarrow \dots\rightarrow j_r\rightarrow i_{d+1}\rightarrow i_d)$ in $\mathcal G$. Then we define $E_{d+1}$ as a subgraph of $\mathcal G^\circ$  to be the oriented cycle obtained from $E_d$ by replacing the single arrow $i_d \rightarrow  i_{d+1}$ with the oriented path $i_d \rightarrow j_1 \dots \rightarrow j_r \rightarrow i_{d+1}$. Here, thanks to  \eqref{pro-n},  we have $a_{i_d,j_1}<0$, $a_{j_e,j_{e+1}}< 0$ for $e\in\{1,...,r-1\}$, and $a_{j_r,i_{d+1}}<0$. Once $E_0,E_1,...,E_p$ are defined, the last one $E_p$ is an oriented cycle $(k_0\rightarrow k_1 \rightarrow \dots\rightarrow k_s=k_0)$ such that $\{i_0,...,i_{p-1}\}\subseteq\{k_0,...,k_{s-1}\}$ and $a_{k_e,k_{e+1}}< 0$ for all $e=0, \dots , s-1$. By definition of $\mathcal G^\circ$, the graph $\mathcal G$ also has the same oriented cycle $(k_0\rightarrow k_1 \rightarrow \dots\rightarrow k_s=k_0)$. This contradicts (AC1). Thus $\mathcal G^\circ$ is acyclic.   

Now refine $\prec$ to a linear ordering on $\mathcal I$. Let $\widetilde A=[ \tilde a_{ij}]$ be given by \eqref{eqn-gim}. We need to show that $\widetilde A=A$. We have $\tilde a_{ij} =a_{ij}=2$ if $i =j$, and  $\tilde a_{ij} =a_{ij}=0$ if $b_{ij}=0$. Assume $i \prec j$ and $\tilde a_{ij}=b_{ij} <0$. If $a_{ij}>0$, then $j \prec i$ by definition, which is a contradiction. Thus $a_{ij}<0$ and $\tilde a_{ij} =a_{ij}$. Assume $i \prec j$ and $\tilde a_{ij}=b_{ij} >0$. Then $b_{ji}<0$. If $a_{ij}<0$, then $a_{ji}<0$ and hence $j \prec i$ by definition, which is a contradiction. Thus $a_{ij}>0$ and $\tilde a_{ij} =a_{ij}$. The other cases are similar, and we have $\tilde a_{ij}=a_{ij}$ in all the cases.
\end{proof}

\begin{Exa} \label{ex-via}
Let $B=[b_{ij}]$ be the skew-symmetric matrix associated with the quiver $Q$ below via the rule $b_{ij}=-1$ if $i \rightarrow j$ and $b_{ij}=0$ if there is no arrow between $i$ and $j$.  
This quiver is obtained applying mutations at vertices $6,5,3,4$ to the acyclic quiver $Q_0$ also shown below.
\[ 
Q=\raisebox{-1.5 cm}{\begin{tikzpicture}[scale=0.4]
\centering
\tkzDefPoint(0,0){A}
\tkzDefShiftPoint[A](60:2.5){F}
\tkzDefShiftPoint[A](0:2.42){B}
\tkzDefShiftPoint[A](-120:2.5){C}
\tkzDefShiftPoint[A](-60:2.5){D}
\tkzDefShiftPoint[B](-60:2.5){E}
\tkzDrawPoints(A,B,C,D,E,F)
\tkzLabelPoint[above right](B){\tiny $3$}
\tkzLabelPoint[above left](A){\tiny $2$}
\tkzLabelPoint[below left](C){\tiny $4$}
\tkzLabelPoint[below](D){\tiny $5$}
\tkzLabelPoint[above](F){\tiny $1$}
\tkzLabelPoint[below right](E){\tiny $6$}
\tkzDrawSegments[->, shorten >=5, shorten <=5, >=stealth'](A,F F,B B,A C,A D,C A,D E,D D,B B,E)
\end{tikzpicture}}
\qquad \qquad Q_0=\raisebox{-1 cm}{\begin{tikzpicture}[scale=0.4]
\centering
\tkzDefPoint(0,0){A}
\tkzDefShiftPoint[A](0:5){F}
\tkzDefShiftPoint[A](0:2.42){B}
\tkzDefShiftPoint[A](-120:2.5){C}
\tkzDefShiftPoint[A](-60:2.5){D}
\tkzDefShiftPoint[B](-60:2.5){E}
\tkzDrawPoints(A,B,C,D,E,F)
\tkzLabelPoint[above right](B){\tiny $3$}
\tkzLabelPoint[above left](A){\tiny $2$}
\tkzLabelPoint[below left](C){\tiny $4$}
\tkzLabelPoint[below](D){\tiny $5$}
\tkzLabelPoint[above](F){\tiny $1$}
\tkzLabelPoint[below right](E){\tiny $6$}
\tkzDrawSegments[->, shorten >=5, shorten <=5, >=stealth'](A,D D,C D,B B,E E,F)
\end{tikzpicture}}
\]
From \eqref{cbw}, we obtain GIM $A=[a_{ij}]= {\tiny \begin{bmatrix}
2 & -1 & 1 & 0 & 0 &0\\
-1 & 2 &-1 & -1 & 1 &0\\
1 &-1 & 2 & 0 &-1&-1 \\
0 &-1 & 0 & 2 & -1&0 \\
0 & 1 &-1 &-1 & 2&1\\0&0&-1&0&1&2
\end{bmatrix}}$
associated to $B$ (or $Q$). We specify the signature of $a_{ij}$ on $Q(=\mathcal G)$ and draw the acyclic graph $\mathcal G^\circ$ defined in the proof of Proposition \ref{prop:mutation-finite-ordering}:
\[ 
\raisebox{-1.2 cm}{\begin{tikzpicture}[scale=0.4]
\centering
\tkzDefPoint(0,0){A}
\tkzDefShiftPoint[A](60:2.5){F}
\tkzDefShiftPoint[A](0:2.42){B}
\tkzDefShiftPoint[A](-120:2.5){C}
\tkzDefShiftPoint[A](-60:2.5){D}
\tkzDefShiftPoint[B](-60:2.5){E}
\tkzDrawPoints(A,B,C,D,E,F)
\tkzLabelPoint[above right](B){\tiny $3$}
\tkzLabelPoint[above left](A){\tiny $2$}
\tkzLabelPoint[below left](C){\tiny $4$}
\tkzLabelPoint[below](D){\tiny $5$}
\tkzLabelPoint[above](F){\tiny $1$}
\tkzLabelPoint[below right](E){\tiny $6$}
\tkzDrawSegments[->, shorten >=5, shorten <=5, >=stealth'](A,F F,B B,A C,A D,C A,D E,D D,B B,E)
\path (0.2,1) node[red, node contents=\tiny{$-$}]
(-1.2,-1.2) node[red, node contents=\tiny{$-$}]
(1.4,0.2) node[red, node contents=\tiny{$-$}]
(0.2,-2.4) node[red, node contents=\tiny{$-$}]
(3.5,-1.2) node[red, node contents=\tiny{$-$}]
(2.1,-1.2) node[red, node contents=\tiny{$-$}]
(2.2,1.2) node[blue, node contents=\tiny{$+$}]
(0.4,-1.2) node[blue, node contents=\tiny{$+$}]
(2.5,-2.4) node[blue, node contents=\tiny{$+$}]; 
\end{tikzpicture}} 
\qquad \qquad \mathcal G^\circ=\raisebox{-1.2 cm}{\begin{tikzpicture}[scale=0.4]
\centering
\tkzDefPoint(0,0){A}
\tkzDefShiftPoint[A](60:2.5){F}
\tkzDefShiftPoint[A](0:2.42){B}
\tkzDefShiftPoint[A](-120:2.5){C}
\tkzDefShiftPoint[A](-60:2.5){D}
\tkzDefShiftPoint[B](-60:2.5){E}
\tkzDrawPoints(A,B,C,D,E,F)
\tkzLabelPoint[above right](B){\tiny $3$}
\tkzLabelPoint[above left](A){\tiny $2$}
\tkzLabelPoint[below left](C){\tiny $4$}
\tkzLabelPoint[below](D){\tiny $5$}
\tkzLabelPoint[above](F){\tiny $1$}
\tkzLabelPoint[below right](E){\tiny $6$}
\tkzDrawSegments[->, shorten >=5, shorten <=5, >=stealth'](A,F B,F B,A C,A D,C D,A D,E D,B B,E)
\end{tikzpicture}} 
\]
It is easy to see that $\mathcal G$ satisfies (AC1) and (AC2). Indeed, we see \eqref{pro-n} holds, and there is only one additional (simple) oriented cycle $(1,3,6,5,4,2,1)$ with chords, which has two positive edges. Now the definition of $\prec$ in the proof of Proposition \ref{prop:mutation-finite-ordering} yields $5 \prec 4 \prec 2 \prec 1 $, $5 \prec 3 \prec 2 \prec 1$ and $5 \prec 3 \prec 6$. Thus a refinement to a linear odering is given by $5 \prec 4 \prec 3 \prec 6 \prec 2 \prec 1 $, which gives rise to $A$ via \eqref{eqn-gim}.   Clearly,  Conjecture~\ref{conj-3} (1) holds with this linear ordering. \end{Exa}

\begin{Exa}
Let $B$ be the skew-symmetric matrix associated with the quiver $Q$ below in the same way as in Example \ref{ex-via}.  
This quiver is obtained applying mutations at vertices $5,3,4$ to the acyclic quiver $Q_0$ also shown below.
\[ 
Q=\raisebox{-1.2 cm}{\begin{tikzpicture}[scale=0.3]
\centering
\tkzDefPoint(0,0){A}
\tkzDefShiftPoint[A](12,0){F}
\tkzDefShiftPoint[A](4,1.5){B}
\tkzDefShiftPoint[A](8,1.5){C}
\tkzDefShiftPoint[A](6,4){D}
\tkzDefShiftPoint[A](6,6){E}
\tkzDrawPoints(A,B,C,D,E,F)
\tkzLabelPoint[below](B){\tiny $3$}
\tkzLabelPoint[below](A){\tiny $1$}
\tkzLabelPoint[below](C){\tiny $5$}
\tkzLabelPoint[below](D){\tiny $4$}
\tkzLabelPoint[below](F){\tiny $6$}
\tkzLabelPoint[above](E){\tiny $2$}
\tkzDrawSegments[->, shorten >=5, shorten <=5, >=stealth'](A,E A,F D,A D,C C,B B,D F,D E,F D,E)
\end{tikzpicture}}
\qquad \qquad Q_0=\raisebox{-1.2 cm}{\begin{tikzpicture}[scale=0.4]
\centering
\tkzDefPoint(0,0){A}
\tkzDefShiftPoint[A](12,0){F}
\tkzDefShiftPoint[A](4,0){B}
\tkzDefShiftPoint[A](8,0){C}
\tkzDefShiftPoint[A](6,4){D}
\tkzDefShiftPoint[A](2,4){E}
\tkzDrawPoints(A,B,C,D,E,F)
\tkzLabelPoint[below](B){\tiny $3$}
\tkzLabelPoint[below](A){\tiny $1$}
\tkzLabelPoint[below](C){\tiny $5$}
\tkzLabelPoint[above](D){\tiny $4$}
\tkzLabelPoint[below](F){\tiny $6$}
\tkzLabelPoint[above](E){\tiny $2$}
\tkzDrawSegments[->, shorten >=5, shorten <=5, >=stealth'](A,E A,B E,B B,D D,C C,F)
\end{tikzpicture}}
\]
From \eqref{cbw}, we obtain GIM $A=[a_{ij}]= {\tiny \begin{bmatrix}
2 & -1 & 0 & 1 & 0 & -1\\
-1 & 2 &0 & 1 & 0 &-1\\
0 &0 & 2 & -1 &1&0 \\
1 &1 & -1 & 2 & -1&-1 \\
0 & 0 &1 &-1 & 2&0\\
-1&-1&0&-1&0&2
\end{bmatrix}}$. We specify the signature of $a_{ij}$ on $Q (=\mathcal G)$ and draw the acyclic graph $\mathcal G^\circ$:
\[ 
\raisebox{-1 cm}{\begin{tikzpicture}[scale=0.3]
\centering
\tkzDefPoint(0,0){A}
\tkzDefShiftPoint[A](12,0){F}
\tkzDefShiftPoint[A](4,1.5){B}
\tkzDefShiftPoint[A](8,1.5){C}
\tkzDefShiftPoint[A](6,4){D}
\tkzDefShiftPoint[A](6,6){E}
\tkzDrawPoints(A,B,C,D,E,F)
\tkzLabelPoint[below](B){\tiny $3$}
\tkzLabelPoint[above left](A){\tiny $1$}
\tkzLabelPoint[below](C){\tiny $5$}
\tkzLabelPoint[below](D){\tiny $4$}
\tkzLabelPoint[above right](F){\tiny $6$}
\tkzLabelPoint[above](E){\tiny $2$}
\tkzDrawSegments[->, shorten >=5, shorten <=5, >=stealth'](A,E A,F D,A D,C C,B B,D F,D E,F D,E)
\path (2.5,3) node[red, node contents=\scriptsize{$-$}]
(9.5,3) node[red, node contents=\scriptsize{$-$}]
(6,-0.3) node[red, node contents=\scriptsize{$-$}]
(6,1.2) node[blue, node contents=\scriptsize{$+$}]
(6.3,4.7) node[blue, node contents=\scriptsize{$+$}]
(4,3) node[blue, node contents=\scriptsize{$+$}]
(8,3) node[red, node contents=\scriptsize{$-$}]
(7,2.3) node[red, node contents=\scriptsize{$-$}]
(5,2.3) node[red, node contents=\scriptsize{$-$}]
; 
\end{tikzpicture}} 
\qquad \qquad \mathcal G^\circ= \raisebox{-1 cm}{\begin{tikzpicture}[scale=0.3]
\centering
\tkzDefPoint(0,0){A}
\tkzDefShiftPoint[A](12,0){F}
\tkzDefShiftPoint[A](4,1.5){B}
\tkzDefShiftPoint[A](8,1.5){C}
\tkzDefShiftPoint[A](6,4){D}
\tkzDefShiftPoint[A](6,6){E}
\tkzDrawPoints(A,B,C,D,E,F)
\tkzLabelPoint[below](B){\tiny $3$}
\tkzLabelPoint[above left](A){\tiny $1$}
\tkzLabelPoint[below](C){\tiny $5$}
\tkzLabelPoint[below](D){\tiny $4$}
\tkzLabelPoint[above right](F){\tiny $6$}
\tkzLabelPoint[above](E){\tiny $2$}
\tkzDrawSegments[->, shorten >=5, shorten <=5, >=stealth'](A,E A,F A,D D,C B,C B,D F,D E,F E,D)
\end{tikzpicture}} 
\]
It is straightforward to check that $\mathcal G$ satisfies (AC1) and (AC2), and we can take $1 \prec 2 \prec 3 \prec 6 \prec 4 \prec 5$ for Conjecture~\ref{conj-3} (1). \end{Exa}

\begin{Rmk}
It will be interesting to investigate when a skew-symmetrizable matrix mutated from an acyclic matrix satisfies (AC1) and (AC2). It may be that such a matrix always satisfies the conditions. 
\end{Rmk}

The lemma below provides another sufficient condition for existence of a linear ordering $\prec$ and its GIM $A$ satisfying the condition in Conjecture~\ref{conj-3} (1). If we do not require that a GIM is determined by a linear ordering, 
it can be proven that a GIM satisfying the condition of Conjecture~\ref{conj-3} (1) always exists for any skew-symmetrizable matrix.
But in order to define the elements $s_i^\bw \in \mathcal A$ as in the next section, it is necessary that $A$ arises from a linear ordering.

\begin{Lem}\label{lem-span-tree}
Let $B$ be a skew-symmetrizable matrix. Consider $\mathcal G=\mathcal{G}(B)$ as undirected. Assume that each of the (undirected) chordless cycles in $\mathcal G$ has an edge in the cycle that is not contained in any other (undirected) chordless cycles. Then Conjecture~\ref{conj-3} (1) is true.
\end{Lem}

\begin{proof}
For a collection of arrows $\mathcal E= \{e_1,\dots,e_p\}$ in $\mathcal G$, we can define a new directed graph $\mathcal H$ by reversing the direction of the arrows of $\mathcal E$. If $\mathcal H$ is acyclic we may define a linear order by setting $i \prec j$ if $i \rightarrow j$ is an arrow of $\mathcal H$ and extending it to a linear ordering on $\mathcal{I}$. We will show that there exists a set of arrows that contains an odd number of arrows (actually one arrow) from every oriented chordless cycle of $\mathcal G$ such that $\mathcal H$ is acyclic. Therefore it follows from \eqref{eqn-gim} that the associated GIM satisfies the condition in the statement of the lemma.

As in the statement of the lemma, we consider $\mathcal G$ {\em undirected} for the time being. Let $\{\mathcal C_1, \mathcal C_2, \dots , \mathcal C_s\}$ be the set of undirected chordless cycles in $\mathcal G$ and take $\mathcal E'=\{ e_1, e_2, \dots , e_s \}$ to be the set of edges in $\mathcal G$ such that $e_i$ is an edge of $\mathcal C_i$ and not an edge of $\mathcal C_j$ for any $j \neq i$. Such an $\mathcal E'$ exists by the assumption. Let $\mathcal T$ be the spanning tree obtained from removing the edges in $\mathcal E'$ from $\mathcal G$. Now we consider $\mathcal G$ directed again, and let $\bar e_i$ be the opposite arrow of $e_i.$ We will construct the desired sequence $\mathcal E$ of arrows as a subset of $\mathcal E'$ by iteratively taking $e_i$ to be in $\mathcal E$ if and only if either 
\begin{enumerate}
\item $\mathcal C_i$ is oriented in $\mathcal G$, or  
\item $\mathcal T \cup \{\bar e_k | e_k \in \mathcal E, k < i\} \cup \{e_i\}$ has an oriented cycle. 
\end{enumerate}

Now define $\mathcal H$ from $\mathcal G$ by reversing the direction of the arrows of $\mathcal E$. Then for any oriented cycle of $\mathcal G$ we have reversed only one arrow of the cycle by (1) and the choice of $\mathcal E'$, so any oriented chordless cycle of $\mathcal G$ is no longer oriented in $\mathcal H$.  Furthermore every non-oriented cycle of $\mathcal G$ remains non-oriented in $\mathcal H$ by (2). Therefore all of the chordless cycles of $\mathcal H$ are non-oriented and it must be that $\mathcal H$ is acyclic. 
\end{proof}

We now give an example illustrating the proof of Lemma~\ref{lem-span-tree}.

\begin{Exa}\label{exa-span-tree}
Let $B$ be the skew-symmetric matrix given in Figure~\ref{fig-span-tree}, or any skew-symmetric matrix with the same directed graph $\mathcal G$ shown in the figure. The graph $\mathcal G$ has two oriented chordless cycles $(1,3,4,1)$ and $(2,4,5,2)$, and three undirected chordless cycles $\mathcal C_1, \mathcal C_2$ and $\mathcal C_3$ given by $\{1,3,4\}$, $\{1,2,4\}$, and $\{2,4,5\}$, respectively. Consider $e_1 = 3 \rightarrow 1$, $e_2 = 1\rightarrow 2$, and $e_3 = 5 \rightarrow 2$. Then $\mathcal E'=\{ e_1,e_2, e_3\}$ satisfies the assumption of Lemma~\ref{lem-span-tree}, and we obtain the spanning tree $\mathcal T = \raisebox{-1 cm}{\begin{tikzpicture}[scale=0.5]
\centering \tkzDefPoint(0,0){A} \tkzDefShiftPoint[A](0:2.42){B} \tkzDefShiftPoint[A](-120:2.5){C} \tkzDefShiftPoint[A](-60:2.5){D} \tkzDefShiftPoint[B](-60:2.5){E} \tkzDrawPoints(A,B,C,D,E) \tkzLabelPoint[above right](B){ \tiny $2$} \tkzLabelPoint[above left](A){ \tiny $1$} \tkzLabelPoint[below left](C){ \tiny $3$} \tkzLabelPoint[below](D){ \tiny $4$} \tkzLabelPoint[below right](E){ \tiny $5$} \tkzDrawSegments[->, shorten >=5, shorten <=5, >=stealth'](D,C A,D D,E B,D) \end{tikzpicture}} $ by removing $\mathcal E'$ from $\mathcal G$. Now to construct $\mathcal E$ we see that $e_1 \in \mathcal E$ by condition (1), $e_2 \not\in \mathcal E$ since $\mathcal C_2$ is not oriented and $\mathcal T \cup \{ \overline{e_1}, e_2 \} =  \raisebox{-1 cm}{\begin{tikzpicture}[scale=0.5]
\centering \tkzDefPoint(0,0){A} \tkzDefShiftPoint[A](0:2.42){B} \tkzDefShiftPoint[A](-120:2.5){C} \tkzDefShiftPoint[A](-60:2.5){D} \tkzDefShiftPoint[B](-60:2.5){E} \tkzDrawPoints(A,B,C,D,E) \tkzLabelPoint[above right](B){ \tiny $2$} \tkzLabelPoint[above left](A){ \tiny $1$} \tkzLabelPoint[below left](C){ \tiny $3$} \tkzLabelPoint[below](D){ \tiny $4$} \tkzLabelPoint[below right](E){ \tiny $5$} \tkzDrawSegments[->, shorten >=5, shorten <=5, >=stealth'](A,C A,B D,C A,D D,E B,D) \end{tikzpicture}} $ does not have an oriented cycle, and $e_3 \in \mathcal E$ by condition (1). Thus $\mathcal E=\{ e_1, e_2 \}$, and $\mathcal H= \raisebox{-1 cm}{\begin{tikzpicture}[scale=0.5]
\centering \tkzDefPoint(0,0){A} \tkzDefShiftPoint[A](0:2.42){B} \tkzDefShiftPoint[A](-120:2.5){C} \tkzDefShiftPoint[A](-60:2.5){D} \tkzDefShiftPoint[B](-60:2.5){E} \tkzDrawPoints(A,B,C,D,E) \tkzLabelPoint[above right](B){ \tiny $2$} \tkzLabelPoint[above left](A){ \tiny $1$} \tkzLabelPoint[below left](C){ \tiny $3$} \tkzLabelPoint[below](D){ \tiny $4$} \tkzLabelPoint[below right](E){ \tiny $5$} \tkzDrawSegments[->, shorten >=5, shorten <=5, >=stealth'](A,C A,B D,C A,D D,E B,D B,E) \end{tikzpicture}} $. The covering relations dictated by the acyclic graph $\mathcal H$ are $ 1 \prec 4 \prec 3$, $1 \prec 2 \prec 4$, and $2 \prec 4 \prec 5.$ One extension of these relations to a linear ordering is $1 \prec 2 \prec 4 \prec 3 \prec 5.$ It is straightforward to check that the associated GIM has exactly one positive entry for each oriented chordless cycle of $B$ (or of $\mathcal G$). 
\begin{figure}
\centering
\begin{minipage}{0.40\textwidth}
\centering
${\scriptsize \begin{bmatrix}
0 & -1 & 2 & -3 & 0 \\
1 & 0 & 0 & -4 & 5 \\
-2 & 0 & 0 & 6 & 0 \\
3 & 4 & -6 & 0 & -7 \\
0 & -5 & 0 & 7 & 0
\end{bmatrix}}$
\end{minipage}
\begin{minipage}{0.40\textwidth}
\hspace{0 cm}
\begin{tikzpicture}[scale=0.8]
\centering
\tkzDefPoint(0,0){A}
\tkzDefShiftPoint[A](0:2.42){B}
\tkzDefShiftPoint[A](-120:2.5){C}
\tkzDefShiftPoint[A](-60:2.5){D}
\tkzDefShiftPoint[B](-60:2.5){E}
\tkzDrawPoints(A,B,C,D,E)
\tkzLabelPoint[above right](B){\scriptsize $2$}
\tkzLabelPoint[above left](A){\scriptsize $1$}
\tkzLabelPoint[below left](C){\scriptsize $3$}
\tkzLabelPoint[below](D){\scriptsize $4$}
\tkzLabelPoint[below right](E){\scriptsize $5$}
\tkzDrawSegments[->, shorten >=5, shorten <=5, >=stealth'](A,B C,A D,C A,D D,E B,D E,B)
\end{tikzpicture}
\end{minipage}
\caption{A skew-symmetric matrix $B$ and the digraph associated to it in Lemma~\ref{lem-span-tree}. The proof of the lemma is illustrated in Example~\ref{exa-span-tree}.}\label{fig-span-tree}
\end{figure}
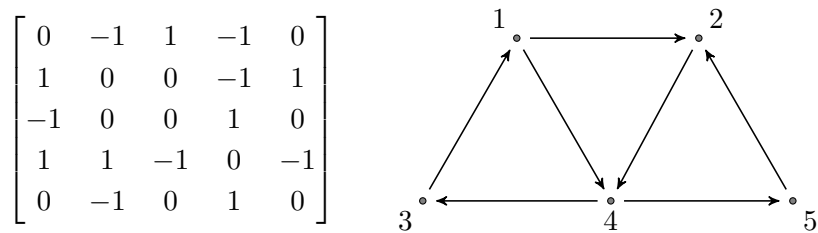

\end{Exa}

Recall the definition of an $L$-matrix from Definition \ref{def-ell}.
%Fix a GIM $A$ and consider a mutation sequence $\bw$. By Lemma \ref{lem-indep}, each $s_i^\bw$ is represented by a unique monomial modulo $2\mathcal A$ of the form
%\[ g_i^\bw s_i {(g_i^\bw)}^{-1}, \quad g_i^\bw \in G, \quad i=1, \dots , n.\]
%Define the $l$-vectors and the $L$-matrix by
%\[   l_i^\bw = g_i^\bw (\lambda_i), \quad i=1, \dots , n, \qquad  
% L^\bw = \begin{bmatrix} l_1^\bw \\ \vdots \\ l_n^\bw \end{bmatrix} . \]
We now provide an example illustrating Conjecture~\ref{conj-3} and $l$-vectors.

\begin{Exa}\label{exa-pi-wv}
Let $B$ be the matrix from Example~\ref{exa-all}. For the two mutation sequences $\bw =[3, 4, 1, 3, 4, 3]$ and $\bv = [4, 1, 3, 4, 1, 3]$ we have $C^\bw = C^\bv.$ 
%$$C^\bw = C^\bv = \begin{pmatrix}
%-1 & 0 & -1 & -1 \\
%1 & 1 & 2 & 1 \\
%-2 & 0 & 0 & -3 \\
%3 & 0 & 0 & 4
%\end{pmatrix}.$$
On the other hand, 
\begin{align*}
 {r_1^\bw} =& s_{3}s_{4}s_{3}s_{1}s_{3}s_{4}s_{3},\\
 {r_2^\bw} =& s_{3}s_{4}s_{3}s_{1}s_{3}s_{4}s_{2}s_{4}s_{3}s_{1}s_{3}s_{4}s_{3},\\
 {r_3^\bw} =& s_{3}s_{4}s_{1}s_{3}s_{4}s_{3}s_{1}s_{3}s_{1}s_{3}s_{4}s_{3}s_{1}s_{4}s_{3},\\
 {r_4^\bw} =& s_{3}s_{4}s_{1}s_{3}s_{4}(s_{3}s_{1})^2s_{3}s_{4}s_{3}(s_{1}s_{3})^2s_{4}s_{3}s_{1}s_{4}s_{3},
\end{align*}
 and 
 \begin{align*}
  {r_1^\bv} =& s_{3}(s_{4}s_{1})^2s_{4}s_{3}s_{4}s_{1}s_{4}s_{3}s_{4}(s_{1}s_{4})^2s_{3},\\
 {r_2^\bv} =& s_{3}(s_{4}s_{1})^2s_{4}s_{3}s_{4}s_{1}s_{4}s_{3}(s_{4}s_{1})^2s_{4}s_{2}s_{4}(s_{1}s_{4})^2s_{3}s_{4}s_{1}s_{4}s_{3}s_{4}(s_{1}s_{4})^2s_{3},\\
 {r_3^\bv} =& s_{3}(s_{4}s_{1})^2s_{4}s_{3}s_{4}(s_{1}s_{4})^2s_{3},\\
 {r_4^\bv} =&(s_{3}s_{4}s_{1})^2s_{4}(s_{1}s_{4}s_{3})^2.
 \end{align*}
There are two oriented cycles on vertices $\{ 1,4,2 \}$ and $\{ 1,4,3 \}$ in $B$. Take the GIM arising from the linear ordering $1 \prec 2 \prec 3 \prec 4$. Then only the entry $a_{14}$ is positive for the cycles, and the condition in Corollary \ref{vague_conj-3} is satisfied. 
Direct computation shows that $\pi ( {r_i^\bw}) = \pi ({r_i^\bv})$, and  Conjecture \ref{vague_conj-3} is verified. 

We identify $\alpha_i$ with $\lambda_i$ in Definition \ref{def-ell} and compute the $l$-vectors 
\begin{align*}
{l_1^\bw} =& s_{3}s_{4}s_{3}(\lambda_{1})=(1,0,-1,-1),&
{l_2^\bw} =& s_{3}s_{4}s_{3}s_{1}s_{3}s_{4}(\lambda_{2})=(-1,1,0,1),\\
{l_3^\bw} =& s_{3}s_{4}s_{1}s_{3}s_{4}s_{3}s_{1}(\lambda_{3})= (2,0,0,-3),&
{l_4^\bw} =& s_{3}s_{4}s_{1}s_{3}s_{4}(s_{3}s_{1})^2s_{3}(\lambda_{4})=(-3,0,0,4),
\end{align*}
and obtain the $L$-matrix
\[ L^\bw = \begin{bmatrix} 1&0&-1&-1\\ -1&1&0&1 \\2&0&0&-3\\ -3&0&0&4 \end{bmatrix}.\] 

On the other hand,
\begin{align*}
{l_1^\bv} =& (-1,0,1,1)=-l_1^\bw,&
{l_2^\bv} =& (-1,1,0,1)=l_2^\bw,\\
{l_3^\bv} =&  (-2,0,0,3)=-l_3^\bw,&
{l_4^\bv} =& (-3,0,0,4)=l_4^\bw.
\end{align*}

\end{Exa}

%\subsection{New phenomenon}
One may hope that the reflections $r_i^\bw$ would give a direct generalization of \cite[Theorem 1.4]{ST} with the expectation that a product of $r_i^{\bw}$'s 
might equal $s_{\tilde \sigma(1)}s_{\tilde \sigma(2)} \cdots s_{\tilde \sigma(n)}$ in $\mathcal W$ for some $\tilde \sigma \in S_n$. However Example~\ref{ex-counterexa} provides a counterexample.

\begin{Exa}\label{ex-counterexa}
Let $B$ be the matrix from Example~\ref{exa-pi-nec}. After the mutation sequence $\bw=  [2, 3, 2, 1]$ we have  
$$
 {r_1^\bw}=s_{1},\quad
 {r_2^\bw}=s_{1}s_{2}s_{1},\quad
 {r_3^\bw}=s_{2}s_{3}s_{2},\quad
 {r_4^\bw}=s_{3}s_{4}s_{3}.$$
It is straightforward to check that $\displaystyle \prod_{i \in \mathcal I}  {r_{\sigma(i)}^\bw}\neq {s_{\tilde\sigma (1)}} {s_{\tilde\sigma (2)}} {s_{\tilde\sigma (3)}}{s_{\tilde\sigma(4)}}$ for any pair of $\sigma, \tilde \sigma \in S_4.$ The same is true when considering the matrix representation of the $s_i$ for any choice of GIM associated to $B$. 

This collection $\{r_i^\bw\}$ also provides an example where for any $\sigma \in S_4$ there will always be some pair of curves in $\eta_i^\bw$ and $\eta_j^\bw$ satisfying Conjecture~\ref{conj-main} that intersect.
\end{Exa}

\section{Main Theorem} \label{sec-results}

In this section, we define the elements $s_i^\bw \in \mathcal A$ and the vectors $\lambda_i^\bw$ to present the main theorem of this paper precisely. The key idea is that we make the formulae \eqref{formulae} inductively hold for each mutation sequence $\bw$. This process shows that there is a unique term in $s_i^\bw$ that survives mod $2 \mathcal A$ without regard to the choice of an ordering $\prec$. More precisely, we prove $s_i^\bw  \equiv r_i^\bw$ (mod $2 \mathcal A$). When $B$ is acyclic, the $c$-vectors $c_i^\bw$ are the reflection vectors of $\pi(r_i^\bw)$ as shown in \cite{ST} with the linear ordering $\prec$ defined by $i \prec j$ if and only if $b_{ij}<0$. However, for general $B$, it is not true any more and comparing $r_i^\bw$ with $s_i^\bw$ will help us understand how the reflections $r_i^\bw$ arise in relation to the $c$-vectors $c_i^\bw$ as it will be shown as a part of the main theorem that $c_i^\bw=\lambda_i^\bw$. 

\medskip

Throughout this section, assume that $B=[b_{ij}]$ is a skew-symmetrizable matrix. Fix a linear ordering $\prec$ on $\mathcal I$ to obtain its associated GIM $A=[a_{ij}]$ from \eqref{eqn-gim}.

\begin{LExa}\label{exa-part1}
{\it As a running example in this section, we consider the skew-symmetrizable matrix 
$$B = \begin{bmatrix}
0 & 1 & -3 \\
-2 & 0 & -2 \\
3 & 1 & 0
\end{bmatrix}$$ 
with symmetrizer $D = \mathrm{diag}(1,2,1)$ and linear ordering $1 \prec 2 \prec 3.$
Following the convention in \eqref{eqn-gim}, we produce the GIM 
$$ A = \begin{bmatrix}
2 & 1 & -3 \\
2 & 2 & -2 \\
-3 & -1 & 2
\end{bmatrix}.$$}
\end{LExa}

Assume that a mutation sequence $\bw$ is given. We will inductively define the elements $s_i^\bw \in \mathcal A$ and the vectors $\lambda^\bw_i$, $i \in \mathcal I$, in what follows. The procedure is summarized in Table \ref{tab-1}.

\medskip
\begin{table}[t] 
\begin{center}
\begin{tikzpicture}[scale=0.25mm]
\draw [help lines] (0,9)--(0,10)--(6,10)--(6,9)--(0,9);
\node at (3,9.5) {Define $\lambda_i=\lambda_i^{[\, ]}$};
\draw[->] (3,9)--(3,8); 
\node [left] at (3,8.5) {\tiny Def $\Lambda$-\ref{L:1}};
\draw [help lines] (0,7)--(0,8)--(6,8)--(6,7)--(0,7);
\node at (3,7.5) {\small Define $s_i=s_i^{[\, ]}$ and $e_i=e_i^{[\, ]}$};
\draw[->] (3,7)--(3,6);
\node [left] at (3,6.5) {\tiny Def $\Lambda$-\ref{L:2}};
\draw [help lines] (0,5)--(0,6)--(6,6)--(6,5)--(0,5);
\node at (3,5.5) {\tiny Define $\mathcal P_s([\,], [i_1])$ and $\mathcal P_\tau([\,], [i_1])$};
\draw[->] (3,5)--(3,4);
\node [left] at (3,4.5) {\tiny Def $\Lambda$-\ref{L:3}};
\draw [help lines] (0,3)--(0,4)--(6,4)--(6,3)--(0,3);
\node at (3,3.5) {Define $\tau_i=\tau_i^{[\, ]}$};
\draw[->] (3,3)--(3,2);
\node [left] at (3,2.5) {\tiny Def $\Lambda$-\ref{L:4}};
\draw [help lines] (0,1)--(0,2)--(6,2)--(6,1)--(0,1);
\node at (3,1.5) {Define $\lambda_i^{[i_1]}$};
\draw[->] (3,1)--(3,0);
\draw [help lines] (0,-1)--(0,0)--(6,0)--(6,-1)--(0,-1);
\node at (3,-0.5) {Let $\bv=[\,]$ and $m=1$};
\draw[->] (6,-0.5)--(7.5,-0.5)--(7.5,9.5)--(9,9.5);
\draw [help lines] (9,9)--(9,10)--(18,10)--(18,9)--(9,9);
\node [left] at (8.5,10) {\tiny Def $\Lambda$-\ref{L:5}};
\node [left] at (20,10) {\tiny Def $\Lambda$-\ref{L:5}};
\node [left] at (13.5,8.5) {\tiny Def $\Lambda$-\ref{L:6}};
\node [left] at (13.5,6.5) {\tiny Def $\Lambda$-\ref{L:7}};
\node [left] at (13.5,4.5) {\tiny Def $\Lambda$-\ref{L:8}};
\node [left] at (13.5,2.5) {\tiny Def $\Lambda$-\ref{L:9}};
\node at (13.5,9.5) {Define $e_i^{\bv[i_m]}$};
\draw[->] (13.5,9)--(13.5,8);
\draw [help lines] (9,7)--(9,8)--(18,8)--(18,7)--(9,7);
\node at (13.5,7.5) {Define $s_i^{\bv[i_m]}$};
\draw[->] (13.5,7)--(13.5,6);
\draw [help lines] (8.4,5)--(8.4,6)--(18.6,6)--(18.6,5)--(8.4,5);
\node at (13.5,5.5) {\tiny Define $\mathcal P_s(\bv[i_m], \bv[i_m,i_{m+1}])$ and $\mathcal P_\tau(\bv[i_m], \bv[i_m,i_{m+1}])$};
\draw[->] (13.5,5)--(13.5,4);
\draw [help lines] (9,3)--(9,4)--(18,4)--(18,3)--(9,3);
\node at (13.5,3.5) {Define $\tau_i^{\bv[i_m]}$};
\draw[->] (13.5,3)--(13.5,2);
\draw [help lines] (9,1)--(9,2)--(18,2)--(18,1)--(9,1);
\node at (13.5,1.5) {Define $\lambda_i^{\bv[i_m,i_{m+1}]}$};
\draw [help lines] (9,-1)--(9,0)--(18,0)--(18,-1)--(9,-1);
\node at (13.5,-0.5) {$\bv[i_m]\mapsto \bv$ and $m+1\mapsto m$};
\draw[->] (13.5,1)--(13.5,0);
\draw[->] (18,-0.5)--(19.5,-0.5)--(19.5,9.5)--(18,9.5);
\end{tikzpicture}
\end{center}
\caption{Flow chart for defining $s_i^\bw$ and $\lambda_i^\bw$}
\label{tab-1}
\end{table}

\medskip

For convenience, we recall the definition of $\mathcal A$ and its representation on $\mathbb Z^n$. As before, set $\lambda_1=(1,0,\dots , 0), \ \lambda_2=(0,1,0, \dots, 0), \dots, \ \lambda_n=(0, \dots, 0, 1)$ to be a basis of $\mathbb Z^n$.
\begin{LDef}\label{L:1}
Let $\mathcal A$ be the (unital) $\mathbb Z$-algebra generated by $s_i, e_i$, $i \in \mathcal I$, subject to the following relations:
$$ s_i^2=1, \quad \sum_{i=1}^n e_i =1, \quad s_ie_i = -e_i, \quad e_is_j=  \begin{cases} s_i+e_i-1 &\text{if } i =j, \\ e_i &\text{if } i \neq j, \end{cases}  \quad e_ie_j=  \begin{cases} e_i &\text{if } i =j, \\ 0 & \text{if } i \neq j. \end{cases}$$
Define a representation $\pi:\mathcal A \rightarrow \mathrm{End}(\mathbb Z^n)$ by 
\begin{equation} 
\pi(s_i)(\lambda_j) = \lambda_j-a_{ji} \lambda_i \quad \text{ and } \quad \pi(e_i)(\lambda_j)  = \delta_{ij}\lambda_i \qquad \text{ for } i,j \in \mathcal I, 
\end{equation} and by extending it through linearity, 
where $\delta_{ij}$ is the Kronecker delta. We will suppress $\pi$ when we write the action of an element of $\mathcal A$ on $\mathbb Z^n$. 
\end{LDef}

\begin{LExa}\label{exa-part2}{\it
Continuing from Example $\Lambda$-\ref{exa-part1}, the action of $s_i$, $i=1,2,3$, are respectively given by the following matrices: $$
\begin{bmatrix}
-1 & 0 & 0 \\
-2 & 1 & 0 \\
3 & 0 & 1
\end{bmatrix}, \quad
\begin{bmatrix}
1 & -1 & 0 \\
0 & -1 & 0 \\
0 & 1 & 1
\end{bmatrix}, \quad
\begin{bmatrix}
1 & 0 & 3 \\
0 & 1 & 2 \\
0 & 0 & -1
\end{bmatrix}.$$
Here the action of $s_i$ on the vector $\lambda_j$ is to be understood by multiplication of the matrix on the right.}  
\end{LExa}

\begin{LDef}\label{L:2}
Suppose that $\bw$ starts with $k$. 
Let $\mathcal P_s([\,], [k])$ be the set of $(i,j)$, $i,j \in \mathcal I$, such that 
\begin{enumerate}
\item[] $\lambda_i > s_k(\lambda_i) \text{ and } \lambda_j < s_k(\lambda_j) \text{ and } ( k \prec  i \prec j \text{ or } i \prec j \prec k)$, or 
\item[] $\lambda_j < s_k(\lambda_j) \text{ and } k=i \prec j$. 
\end{enumerate} 
Let $\mathcal P_\tau([\,], [k])$ be the set of $(i,j)$, $i,j \in \mathcal I$, such that 
\begin{enumerate}
\item[] $\lambda_i > s_k(\lambda_i) \text{ and } \lambda_j < s_k(\lambda_j) \text{ and } ( k \prec  i \prec j \text{ or } i \prec j \prec k)$, or 
\item[] $\lambda_j > s_k(\lambda_j) \text{ and } k=i \succ j$. 
\end{enumerate}
\end{LDef}
\begin{LDef}\label{L:3}
Define \[ e_{\tau,i} = \sum e_j \in \mathcal A,\]
where the sum is over $j$ such that $(i,j) \in \mathcal P_\tau([\,],[k])$ or $(j,i) \in \mathcal P_\tau([\,],[k])$, and 
define 
\begin{equation} \label{tau_i} \tau_i = s_i+ 2  (1-s_i) e_{\tau,i} \qquad \text{ for } i \in \mathcal I. \end{equation}
\end{LDef}
\begin{LDef} \label{L:4}
Define 
\begin{equation}\label{eqn-lambda1}
 \lambda_i^{[k]} =  \begin{cases}  \tau_k (\lambda_i) & \text{ if } \lambda_i < s_k(\lambda_i) \text{ and } k \prec i, \text{ or if } \lambda_i > s_k(\lambda_i) \text{ and } k \succ i, \text{ or if } i=k, \\  \lambda_i & \text{ otherwise}. \end{cases} 
\end{equation}
\end{LDef}

\begin{LExa}\label{exa-part3}
{\it Continuing from Example $\Lambda$-\ref{exa-part2}, take $\bw = [2,3]$ so $k=2.$ We have $\mathcal{P}_s([],[2]) = \{(2,3)\}$ and $\mathcal{P}_\tau([],[2]) = \{(2,1)\}$. It follows that $e_{\tau,1} = e_2,$ $e_{\tau,2} = e_1,$ and $e_{\tau,3} = 0.$ Putting everything together we see that $$\tau_1 = s_1+2(1-s_1)e_2, \quad 
\tau_2 = s_2+2(1-s_2)e_1, \quad 
\tau_3 = s_3.$$
We then have 
 $$ \tau_2(\lambda_1) = (2-s_2)(\lambda_1) = (1, 1, 0), \quad
 \tau_2(\lambda_2) = s_2(\lambda_2) =(0, -1, 0), \quad
\tau_2(\lambda_3)= s_2(\lambda_3) = (0, 1, 1).
 $$
By \eqref{eqn-lambda1} we define  $\lambda_i^{[2]} := \tau_2(\lambda_i)$ for all $i \in\mathcal{I}$.}
\end{LExa}

\begin{LDef}\label{L:5}
Inductively, assume $\bw=\bv [k, \ell, \dots, m]$, including the case $\bv=[\,]$. 
For $i\neq k$, define
\begin{align} \label{eibvk} 
e_i^{\bv[k]} &=   \begin{cases}  \tau_k^\bv e_i^\bv \tau_k^\bv & \text{ if } \lambda_i^\bv < s_k^\bv(\lambda_i^\bv) \text{ and } k \prec i, \text{ or if } \lambda_i^\bv > s_k^\bv(\lambda_i^\bv) \text{ and } k \succ i,  \\   e_i^\bv & \text{ otherwise}, \end{cases} 
\end{align}
and
\[ e_k^{\bv[k]} =e_k^\bv- e_k^\bv e_+^{\bv[k]}, \]
where we set \[ e_+^{\bv[k]} = \sum_{ j \neq k,\  \lambda_j^{\bv[k]} \neq \lambda_j^\bv } e_j^{\bv[k]}. \] 
\end{LDef}

\begin{LExa}\label{exa-part5}
{\it Continuing from Example $\Lambda$-\ref{exa-part3} we have $k = 2, \ell = 3,$ and $\bv = [].$
For $i = 1,3$ we have $e_i^{[2]} = \tau_2e_i\tau_2.$ More explicitly, 
$$e_1^{[2]} = \tau_2e_1\tau_2 = (2-s_2)e_1, \qquad e_3^{[2]} = \tau_2e_3\tau_2 = s_2e_3.$$
For $i=2$, 
$$e_+^{[2]} = e_1^{[2]} + e_3^{[2]}  = 2e_1 - s_2(e_1-e_3)$$ and finally
$$e_2^{[2]} = e_2(1-e_1^{[2]} - e_3^{[2]}) = s_2(e_1-e_3)-e_1+e_2+e_3.$$}
\end{LExa}

\begin{LDef}\label{L:6}
Define \[ e^{\bv[k]}_{s,i} = \sum e_j^{\bv[k]} ,\]
where the sum is over $j$ such that $(i,j) \in \mathcal P_s(\bv,\bv[k])$ or $(j,i) \in \mathcal P_s(\bv,\bv[k])$, and  
define
\begin{align} \label{s_i^} s_i^{\bv[k]} &= \begin{cases} \tau_k^\bv \tau_i^\bv \tau_k^\bv + 2 ( 1- \tau_k^\bv \tau_i^\bv \tau_k^\bv ) e^{\bv[k]}_{s,i} &  \text{ if } \lambda_i^\bv < s_k^\bv(\lambda_i^\bv) \text{ and } k \prec i, \text{ or if } \lambda_i^\bv > s_k^\bv(\lambda_i^\bv) \text{ and } k \succ i, \\ \tau_i^\bv+ 2(1-\tau_i^\bv) e^{\bv[k]}_{s,i} &  \text{ otherwise}.
\end{cases} 
\end{align}
\end{LDef}

\begin{LExa}\label{exa-part6}
{\it In Example $\Lambda$-\ref{exa-part3} we computed $\mathcal{P}_s([],[2]) = \{(2,3)\}$ so 
$$e^{[2]}_{s,1} = 0, \qquad e^{[2]}_{s,2} = e_3^{[2]},\qquad e^{[2]}_{s,3} =e_2^{[2]}.$$
Now by comparing $s_i(\lambda_i)$ given in Example $\Lambda$-\ref{exa-part2} to $\lambda_i$, we have
\begin{align*}
s_1^{[2]}&= \tau_2\tau_1\tau_2 + 2(1-\tau_2\tau_1\tau_2)e_{s,1}^{[2]}= \tau_2\tau_1\tau_2\\
&= (2-2s_1+s_2s_1)s_2+ 2(1-2s_2+2s_1s_2-s_2s_1s_2)e_1+2(-2+2s_1+2s_2-s_2s_1)e_3,\\
s_2^{[2]}&=\tau_2 + 2(1-\tau_2)e_{s,2}^{[2]}=2(e_1-e_3)+s_2(1-2(e_1-e_3))=s_2+2(1-s_2)(e_1-e_3),\\
s_3^{[2]}&= \tau_3 + 2(1-\tau_3)e_{s,3}^{[2]} = s_2s_3s_2+2(1+s_2s_3)e_2+2(1-2s_2-s_2s_3s_2)e_3.
\end{align*}  }
\end{LExa}

\begin{LDef}\label{L:7}
Let $\mathcal P_s(\bv[k], \bv[k,\ell])$ be the collection of $(i,j)$ such that 
\begin{enumerate}
\item[] $( \ell \prec i \prec j \text{ or } i \prec j \prec \ell)  \text{ and }  \lambda_i^{\bv[k]} > s_\ell^{\bv[k]}(\lambda_i^{\bv[k]}) \text{ and } \lambda_j^{\bv[k]} < s_\ell^{\bv[k]}(\lambda_j^{\bv[k]})  $, or  
\item[] $\ell=i \succ j  \text{ and } \lambda_\ell^{\bv[k]} < 0  \text{ and }    \lambda_j^{\bv[k]} > s_\ell^{\bv[k]}(\lambda_j^{\bv[k]})$, or 
\item[] $\ell=i \prec j  \text{ and }  \lambda_\ell^{\bv[k]} > 0 \text{ and }     \lambda_j^{\bv[k]} < s_\ell^{\bv[k]}(\lambda_j^{\bv[k]})$. 
\end{enumerate}
Similarly, let $\mathcal P_\tau(\bv[k], \bv[k,\ell])$ be the collection of $(i,j)$ such that 
\begin{enumerate}
\item[] $( \ell \prec i \prec j \text{ or } i \prec j \prec \ell)  \text{ and }  \lambda_i^{\bv[k]} > s_\ell^{\bv[k]}(\lambda_i^{\bv[k]}) \text{ and } \lambda_j^{\bv[k]} < s_\ell^{\bv[k]}(\lambda_j^{\bv[k]})  $, or  
\item[] $\ell=i \succ j  \text{ and } \lambda_\ell^{\bv[k]} >0  \text{ and }    \lambda_j^{\bv[k]} > s_\ell^{\bv[k]}(\lambda_j^{\bv[k]})$, or 
\item[] $\ell=i \prec j  \text{ and }  \lambda_\ell^{\bv[k]} <0 \text{ and }     \lambda_j^{\bv[k]} < s_\ell^{\bv[k]}(\lambda_j^{\bv[k]})$. 
\end{enumerate}
\end{LDef}

\begin{LExa}\label{exa-part7}
{\it Continuing from Example $\Lambda$-\ref{exa-part6} we have $$
s^{[2]}_3(\lambda^{[2]}_1) = (1, 1, 1),\quad
s^{[2]}_3(\lambda^{[2]}_2) = (0, 3, 2), \quad
s^{[2]}_3(\lambda^{[2]}_3) = (0, -4, -3)$$
so $\mathcal P_s([2],[2,3]) = \emptyset$ and $\mathcal P_\tau([2],[2,3]) = \{(3,2)\}.$ }
\end{LExa}

\begin{LDef}\label{L:8}
Define \[ e^{\bv[k]}_{\tau,i} = \sum e_j^{\bv[k]} \in \mathcal A,\]
where the sum is over $j$ such that $(i,j) \in \mathcal P_\tau(\bv[k],\bv[k,\ell])$ or $(j,i) \in \mathcal P_\tau(\bv[k],\bv[k,\ell])$, and define 
\begin{equation} \label{tau_i^} \tau_i^{\bv[k]} = s_i^{\bv[k]}+ 2  (1-s_i^{\bv[k]}) e^{\bv[k]}_{\tau,i} \qquad \text{ for } i \in \mathcal I. \end{equation}
\end{LDef}
\begin{LDef} \label{L:9}
Finally, define 
\begin{equation} \label{fin-la} \lambda_j^{\bv[k,\ell]} =  \begin{cases}  \tau_\ell^{\bv[k]} ( \lambda_j^{\bv[k]} ) & \text{ if } \lambda_j^{\bv[k]} < s_\ell^{\bv[k]}(\lambda_j^{\bv[k]}) \text{ and } \ell \prec j, \\ & \qquad \text{ or if } \lambda_j^{\bv[k]} > s_\ell^{\bv[k]}(\lambda_j^{\bv[k]})  \text{ and } \ell \succ j,  \text{ or if } \ell=j,  \\ \lambda_j^{\bv[k]} & \text{ otherwise}. \end{cases} \end{equation}
\end{LDef}

\begin{LExa}\label{exa-part8}
{\it Continuing from Example $\Lambda$-\ref{exa-part7} we have $$ e_{\tau,1}^{[2,3]} = 0,\quad e_{\tau,2}^{[2,3]} = e_3^{[2]},\quad e_{\tau,3}^{[2,3]} = e_2^{[2]}.$$
Furthermore, $$\tau_1^{[2]} = s_1^{[2]},\quad\tau_2^{[2]} = s_{2}-2(1-s_{2} )e_{1},\quad \tau_3^{[2]} = s_{2}s_{3}s_{2} +2(1-s_{2}s_{3}s_{2} + s_{2}s_{3} - s_{2})e_{1}.$$
In Example $\Lambda$-\ref{exa-part6} we computed $s_3^{[2]}(\lambda_i^{[2]}).$ Finishing our running example we conclude that 
\begin{align*} \lambda_1^{[2,3]}& =  \lambda_1^{[2]}  = (2-s_{2})(\lambda_{1}) = (1,1,0), \\
\lambda_2^{[2,3]} &= \tau_3^{[2]}(\lambda_2^{[2]}) = s_{2}s_{3}(\lambda_{2}) = (0, 1, 2), \\
\lambda_3^{[2,3]} &= \tau_3^{[2]}(\lambda_3^{[2]})  = -s_{2}(\lambda_{3}) =(0, -1, -1). 
\end{align*} }
\end{LExa}

For any mutation sequence $\bw$, set \[ \Lambda^\bw= \begin{bmatrix} \lambda_1^{\bw } \\ \vdots \\  \lambda_n^{\bw } \end{bmatrix}. \] 

Now we restate the main theorem of this paper.

\begin{Thm}[Theorem \ref{vague_conj-1}] \label{con-prec-1}
Let $B$ be a skew-symmetrizable matrix. 
Fix a linear ordering $\prec$ on $\mathcal I$ to obtain a GIM $A$. Then,   
for any mutation sequence $\bw$, we have 
\begin{equation*} \lambda_i^\bw =  c_i^\bw \qquad \text{ for all $i \in \mathcal I$}, \tag{\it C1} \end{equation*}
or equivalently, 
\[ \Lambda^\bw = C^\bw ;\]
for $i,j \in \mathcal I$,
\begin{align*} s_i^\bw(\lambda_j^\bw)& = \begin{cases} \lambda_j^\bw +b_{ji}^\bw \lambda_i^\bw & \text{ if } i \prec j, \\ -\lambda_j^\bw & \text{ if } i =j, \\\lambda_j^\bw -b_{ji}^\bw \lambda_i^\bw & \text{ if } i \succ j , \tag{\it C2} \end{cases} 
& e_i^\bw(\lambda_j^\bw)& = \delta_{ij} \lambda_j^\bw ;
\end{align*}
moreover, for all $i \in I$, 
\[\tag{\it C3}  s_i^\bw \equiv r_i^\bw \quad (\mathrm{mod}\ 2 \mathcal A) . \]
\end{Thm}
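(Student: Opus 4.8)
The plan is to prove (C1), (C2), and (C3) simultaneously by induction on the length of the mutation sequence $\bw$. For the base case $\bw=[\,]$, statement (C1) reads $\lambda_i=c_i$ with $C^{[\,]}=I$; statement (C2) is exactly the defining action of Definition $\Lambda$-\ref{L:1} read through \eqref{eqn-gim}, since there $a_{ji}=-b_{ji}$ when $i\prec j$ and $a_{ji}=b_{ji}$ when $i\succ j$, so $s_i(\lambda_j)=\lambda_j-a_{ji}\lambda_i$ reproduces \eqref{formulae}, and $e_i(\lambda_j)=\delta_{ij}\lambda_j$ holds by definition; and (C3) holds trivially because $s_i^{[\,]}=s_i=r_i^{[\,]}$. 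The inductive step appends a single index, producing $\bw[\ell]$, and this is exactly where the six cases of Table \ref{tab-1} will appear.

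Assuming the three statements for $\bw$, I would first use (C2) for $\bw$ together with sign-coherence of $c_\ell^\bw=\lambda_\ell^\bw$ (from (C1)) to translate every sign condition in Definitions $\Lambda$-\ref{L:5}--\ref{L:9} into a statement about $b_{j\ell}^\bw$ and the common sign of $c_\ell^\bw$; from $s_\ell^\bw(\lambda_j^\bw)-\lambda_j^\bw=\pm b_{j\ell}^\bw\lambda_\ell^\bw$ (sign according to $\ell\prec j$ or $\ell\succ j$) one reads off $\lambda_j^\bw\lessgtr s_\ell^\bw(\lambda_j^\bw)$. The key computation is that for an index $j$ flagged by $\mathcal P_\tau(\bw,\bw[\ell])$ the idempotent part of (C2) gives $e_{\tau,\ell}^\bw(\lambda_j^\bw)=\lambda_j^\bw$, whence $\tau_\ell^\bw(\lambda_j^\bw)=2\lambda_j^\bw-s_\ell^\bw(\lambda_j^\bw)$, while for unflagged $j$ one simply has $\tau_\ell^\bw(\lambda_j^\bw)=s_\ell^\bw(\lambda_j^\bw)$. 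Comparing with the $C$-matrix mutation extracted from \eqref{eqn-mmuu} --- namely $c_j^{\bw[\ell]}=c_j^\bw+|b_{j\ell}^\bw|\,c_\ell^\bw$ exactly when $b_{j\ell}^\bw c_\ell^\bw>0$, and $c_\ell^{\bw[\ell]}=-c_\ell^\bw$ --- one checks case by case that $\lambda_j^{\bw[\ell]}$ of \eqref{fin-la} equals $c_j^{\bw[\ell]}$, which is (C1) for $\bw[\ell]$. The sign correction encoded in $\mathcal P_\tau$ is precisely what reconciles $\pm b_{j\ell}^\bw\lambda_\ell^\bw$ with $|b_{j\ell}^\bw|\lambda_\ell^\bw$ when $c_\ell^\bw<0$.

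Next I would verify (C2) for $\bw[\ell]$. The idempotents $e_i^{\bw[\ell]}$ of Definition $\Lambda$-\ref{L:5} must be shown to be the projections dual to the new $\mathbb Z$-basis $\{\lambda_j^{\bw[\ell]}\}$ (a basis since $C^{\bw[\ell]}\in\GL_n(\mathbb Z)$), i.e. $e_i^{\bw[\ell]}(\lambda_j^{\bw[\ell]})=\delta_{ij}\lambda_j^{\bw[\ell]}$; this follows by conjugating the inductive identity $e_i^\bw(\lambda_j^\bw)=\delta_{ij}\lambda_j^\bw$ by $\tau_\ell^\bw$ and tracking which vectors were moved. For the reflections, using $s_i^{\bw[\ell]}=\tau_\ell^\bw\tau_i^\bw\tau_\ell^\bw+2(1-\tau_\ell^\bw\tau_i^\bw\tau_\ell^\bw)e_{s,i}^{\bw[\ell]}$, respectively $\tau_i^\bw+2(1-\tau_i^\bw)e_{s,i}^{\bw[\ell]}$, from \eqref{s_i^}, one computes $s_i^{\bw[\ell]}(\lambda_j^{\bw[\ell]})$ and matches it against the target $\lambda_j^{\bw[\ell]}\pm b_{ji}^{\bw[\ell]}\lambda_i^{\bw[\ell]}$ prescribed by \eqref{formulae}, where $b_{ji}^{\bw[\ell]}$ is read from the $B$-part of \eqref{eqn-mmuu}. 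This is the substance of the argument, and it splits according to the position of the pair $(i,j)$ relative to $\ell$ in $\prec$ and the signs of $b_{i\ell}^\bw,b_{j\ell}^\bw,c_\ell^\bw$ --- the six cases with their subcases --- the correction term $2(1-\cdots)e_{s,i}^{\bw[\ell]}$ governed by $\mathcal P_s(\bw,\bw[\ell])$ being exactly what repairs the discrepancy between the conjugated reflection and the mutated data in those subcases where a vector has changed sign.

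Finally (C3) is the cleanest part and I would deduce it last. Modulo $2\mathcal A$ every correction term $2(1-x)e$ vanishes, so $\tau_i^\bw\equiv s_i^\bw$ and $\tau_\ell^\bw\tau_i^\bw\tau_\ell^\bw\equiv s_\ell^\bw s_i^\bw s_\ell^\bw$. The sign translation above shows that the condition selecting the conjugation branch of \eqref{s_i^} --- $\lambda_i^\bw<s_\ell^\bw(\lambda_i^\bw)$ with $\ell\prec i$, or $\lambda_i^\bw>s_\ell^\bw(\lambda_i^\bw)$ with $\ell\succ i$ --- coincides exactly with the condition $b_{i\ell}^\bw c_\ell^\bw>0$ selecting the conjugation branch of \eqref{def-sx_i-1}. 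Hence, invoking the inductive hypotheses $s_i^\bw\equiv r_i^\bw$ and $s_\ell^\bw\equiv r_\ell^\bw$, we obtain $s_i^{\bw[\ell]}\equiv s_\ell^\bw s_i^\bw s_\ell^\bw\equiv r_\ell^\bw r_i^\bw r_\ell^\bw=r_i^{\bw[\ell]}$ in the conjugation branch and $s_i^{\bw[\ell]}\equiv s_i^\bw\equiv r_i^\bw=r_i^{\bw[\ell]}$ otherwise, giving (C3). The main obstacle is unquestionably the (C2) verification of the preceding paragraph: organizing the six cases and their subcases so that the bookkeeping of signs, of the sets $\mathcal P_s$ and $\mathcal P_\tau$, and of the $B$-part of \eqref{eqn-mmuu} stays consistent and genuinely reproduces \eqref{formulae} in every configuration. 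Everything else is the base case, the clean reduction modulo $2\mathcal A$, or the design-driven matching of branch conditions.
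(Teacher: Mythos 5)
Your proposal is correct and follows essentially the same route as the paper: a simultaneous induction on the length of the mutation sequence, translation of the sign conditions in the definitions into $b_{ik}^{\bw}c_k^{\bw}>0$ via (C1)/(C2) and sign-coherence, a case analysis for (C1)/(C2) organized by the position of $i,j$ relative to the mutated index in $\prec$ and by the signs involved, and (C3) obtained by reducing the defining formulas mod $2\mathcal A$ and matching the branch condition with that of Definition \ref{def-r}. The only organizational difference is that the paper first proves a standalone proposition establishing the relations $\sum_i e_i^{\bw}=1$, $e_i^{\bw}e_j^{\bw}=\delta_{ij}e_i^{\bw}$, $s_i^{\bw}s_i^{\bw}=\tau_i^{\bw}\tau_i^{\bw}=1$, etc.\ in $\mathcal A$ by a separate induction and uses them throughout the case analysis, whereas you plan to extract the needed operator identities directly from (C2) on the basis $\{\lambda_j^{\bw}\}$ --- a presentational rather than substantive difference.
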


In what follows, we prove (C3). A proof of (C1) and (C2) will be given in Section \ref{sec-proof-c1c2}.

\begin{proof}[Proof of (C3)]
Notice from \eqref{tau_i^} that $s_i^\bw \equiv \tau_i^\bw$ modulo $2 \mathcal A$. Then the equation \eqref{s_i^} becomes modulo $2 \mathcal A$ \begin{align}  s_i^{\bv[k]} &\equiv \begin{cases} s_k^\bv s_i^\bv s_k^\bv &  \text{ if } \lambda_i^\bv < s_k^\bv(\lambda_i^\bv) \text{ and } k \prec i, \text{ or if } \lambda_i^\bv > s_k^\bv(\lambda_i^\bv) \text{ and } k \succ i, \\ s_i^\bv &  \text{ otherwise}.
\end{cases} 
\end{align}
Using (C1) and (C2), both of the conditions $\lambda_i^\bv < s_k^\bv(\lambda_i^\bv),  k \prec i$ and $\lambda_i^\bv > s_k^\bv(\lambda_i^\bv), k \succ i$ can be rewritten as
\[ b_{ik}^\bv \lambda_k^\bv=b_{ik}^\bv c_k^\bv >0, \]
which does not depend on the choice of a GIM.
 Now (C3) follows from the definitions \eqref{tau_i}, \eqref{s_i^} and 
\eqref{tau_i^} and from induction.
\end{proof}

\subsection{Some observations}
We close this section with examples which show some relationship between 
$c$-vectors and L\"osungen. 

\begin{Exa}\label{ex-counterexa2}
Consider the matrix $B = \begin{bmatrix}
0 & -1 & -1 & -1 \\
1 & 0 & 1 & -1 \\
1 & -1 & 0 & 1 \\
1 & 1 & -1 & 0
\end{bmatrix}$. 
The mutation sequence $[1,2,3,4,2]$ produces the $c$-vector $(5,2,2,2)$ which is not a L\"osung for any choice of GIM associated to $B$. 
\end{Exa}

Example~\ref{ex-counterexa3} below shows that even if a $c$-vector is a real L\"osung our formula may not always express it as such. %Further investigation of the relationship between $c$-vectors and real L\"osungen will appear in subsequent work.

\begin{Exa}\label{ex-counterexa3}
Consider the matrix $B = \begin{bmatrix}
0 & 1 & 0 & 0 \\
-1 & 0 & -1 & 0 \\
0 & 1 & 0 & 1 \\
0 & 0 & -1 & 0
\end{bmatrix}.$ This is a finite-type matrix that corresponds to an orientation of the Dynkin diagram $A_4.$ After the mutation sequence $\bw = [2,4,2]$ with the GIM associated to the linear order $4 \prec 2 \prec 3 \prec 1$ our formula produces $$\lambda_3^\bw = -s_{2}s_{4}s_{2}\lambda_{3} - 2s_{2}\lambda_{3} + 2\lambda_{3} + 2s_{4}s_{2}\lambda_{3} = (0, 0, 1, 1).$$ However, we also have $s_{2}s_{4}s_{2}\lambda_{3} = (0,0,1,1)$ so we see that $\lambda_3^\bw$ could just be expressed as the real L\"osung $s_{2}s_{4}s_{2}\lambda_{3}$ as opposed to the linear combination of real L\"osungen given above. For completeness, we have $s_{2}\lambda_{3} = (0,1,1,0)$ and $s_{4}s_{2}\lambda_{3} = (0,1,1,1).$

It is also worth noting that the matrix representation of $-s_{2}s_{4}s_{2} - 2s_{2} + 2 + 2s_{4}s_{2}$ is not equal to the matrix representation of $s_{2}s_{4}s_{2}$. Furthermore, for any choice of linear ordering the expression for $\lambda_3^\bw$ that our formula produces will always have three or four terms even though the vector is a real L\"osung.
\end{Exa}

\section{Proof of (C1) and (C2) in Theorem \ref{con-prec-1}} \label{sec-proof-c1c2}
In this section we prove Theorem \ref{con-prec-1}.
We start with the following proposition which shows that $s_i^\bw, e_i^\bw$ satisfy natural relations for each $\bw$. 
\begin{Prop}
For $i, j \in \mathcal I$ and for any mutation sequence $\bw$, the following relations hold:
\begin{align}
& \sum_{i=1}^n e_i^\bw =1, \label{vava-1} \\
&  e_i^\bw e_j^\bw=  \delta_{ij} e_i^\bw , \label{vava-2}\\
& e_i^\bw s_j^\bw=  \begin{cases} s_i^\bw+e_i^\bw-1 &\text{if } i =j, \\ e_i^\bw &\text{if } i \neq j, \end{cases} \label{vava-3} \\
& e_i^\bw \tau_j^\bw=  \begin{cases} \tau_i^\bw+e_i^\bw-1 &\text{if } i =j, \\ e_i^\bw &\text{if } i \neq j, \end{cases} \label{vava-4} \\
& s_i^\bw s_i^\bw=1, \qquad \quad \tau_i^\bw \tau_i^\bw=1,\label{vava-5} \\ 
&  s_i^\bw e_i^\bw = -e_i^\bw, \qquad  \tau_i^\bw e_i^\bw = -e_i^\bw .  \label{vava-6} 
\end{align}

\end{Prop}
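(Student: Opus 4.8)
The plan is to argue by induction on the length of the mutation sequence $\bw$, establishing all of \eqref{vava-1}--\eqref{vava-6} simultaneously. For the base case $\bw=[\,]$, the relations \eqref{vava-1}, \eqref{vava-2}, \eqref{vava-3} and the $s$-halves of \eqref{vava-5}, \eqref{vava-6} are precisely the defining relations of $\mathcal A$ from Definition~$\Lambda$-\ref{L:1}. The $\tau$-relation \eqref{vava-4} and the $\tau$-halves of \eqref{vava-5}, \eqref{vava-6} then follow by a direct computation from \eqref{tau_i}: writing $\tau_i=s_i+2(1-s_i)e_{\tau,i}$ with $e_{\tau,i}=\sum_j e_j$ summed over indices $j\neq i$, one uses $e_js_i=e_j$ (so $e_{\tau,i}s_i=e_{\tau,i}$, hence $e_{\tau,i}(1-s_i)=0$) together with $e_{\tau,i}e_i=0$ to get $\tau_i^2=1$, and $s_ie_i=-e_i$ to get $\tau_ie_i=-e_i$; the relation $e_i\tau_j$ then splits into the cases $i\neq j$ and $i=j$ exactly as in \eqref{vava-4}.

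For the inductive step, write $\bw=\bv[k]$. The crucial observation is that the inductive hypothesis \eqref{vava-1}, \eqref{vava-2}, \eqref{vava-4}, \eqref{vava-5}, \eqref{vava-6} for $\bv$ says exactly that the pair $(\tau_i^\bv,e_i^\bv)$ obeys the defining relations of $\mathcal A$ with $s_i$ replaced by $\tau_i^\bv$. Hence there is a $\mathbb Z$-algebra homomorphism $\phi_\bv\colon\mathcal A\to\mathcal A$ determined by $s_i\mapsto\tau_i^\bv$ and $e_i\mapsto e_i^\bv$. Inspecting \eqref{eibvk}, \eqref{s_i^}, \eqref{tau_i^} shows that every object at level $\bv[k]$ is the image under $\phi_\bv$ of a \emph{universal} element built from the generators by the same formula: setting $P=\{i\neq k:\ \lambda_i^\bv<s_k^\bv(\lambda_i^\bv)\text{ and }k\prec i,\ \text{or}\ \lambda_i^\bv>s_k^\bv(\lambda_i^\bv)\text{ and }k\succ i\}$, and defining $\hat e_i=s_ke_is_k$ for $i\in P$, $\hat e_i=e_i$ for $i\notin P\cup\{k\}$, and $\hat e_k=e_k\bigl(1-\sum_{j\in P}s_ke_js_k\bigr)$, together with $\hat s_i,\hat\tau_i$ defined analogously from \eqref{s_i^}, \eqref{tau_i^}, one has $e_i^{\bv[k]}=\phi_\bv(\hat e_i)$, $s_i^{\bv[k]}=\phi_\bv(\hat s_i)$, and $\tau_i^{\bv[k]}=\phi_\bv(\hat\tau_i)$. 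Since $\phi_\bv$ is a homomorphism, the step reduces to verifying \eqref{vava-1}--\eqref{vava-6} for $\hat e_i,\hat s_i,\hat\tau_i$ inside $\mathcal A$ itself.

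This universal verification has two stages. First, the idempotent relations: orthogonality \eqref{vava-2} for the mixed family follows by short cancellations, e.g.\ for $i\in P$, $j\notin P\cup\{k\}$ one computes $\hat e_i\hat e_j=s_k(e_is_k)e_j=s_ke_ie_j=0$, and completeness \eqref{vava-1}, after using $e_js_k=e_j$ (so $s_kFs_k=s_kF$ for $F=\sum_{j\in P}e_j$) and $e_ks_k=s_k+e_k-1$ with $e_kF=0$, reduces to the single identity $(1-e_k)s_kFs_k=F$ --- which is exactly the point of the correction term $\hat e_k=e_k(1-\sum_{j\in P}s_ke_js_k)$. Second, each $\hat s_i$ has the twist form $Y=X+2(1-X)E$ with $X$ an involution ($X=s_ks_is_k$ or $s_i$) and $E=\sum_j\hat e_j$ a sum over indices $j\neq i$; the only facts needed are $E^2=E$, $E\hat e_i=0$, the rank-one relations for the pair $(X,\hat e_i)$, and $\hat e_jX=\hat e_j$ for $j\neq i$. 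This last identity holds in every conjugation sub-case by repeated use of $e_ms_p=e_m$ ($m\neq p$), and it requires only $j\neq i$, which is guaranteed because $\mathcal P_s$ and $\mathcal P_\tau$ pair distinct indices. Granting it, $E(1-X)=0$, whence $Y^2=1+2(1-X)E(1-X)(2E-1)=1$ and the interactions $Y\hat e_i=-\hat e_i$, $\hat e_iY=Y+\hat e_i-1$, $\hat e_jY=\hat e_j$ $(j\neq i)$ follow uniformly; applying the same twist once more with $X=\hat s_i$ yields the $\tau$-versions \eqref{vava-4} and the $\tau$-parts of \eqref{vava-5}, \eqref{vava-6}.

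The main obstacle is the idempotent bookkeeping in the first stage: showing that the mixed family $\{\hat e_i\}$ of conjugated ($i\in P$) and unconjugated ($i\notin P$) projections remains complete and orthogonal. The cross-term cancellations and the identity $(1-e_k)s_kFs_k=F$ are precisely what force the shape of $\hat e_k$, and confirming that the summation set of $e_+^{\bv[k]}$ in Definition~$\Lambda$-\ref{L:5} coincides with the conjugated set $P$ uses the inductively established action of the $e$'s and $\tau$'s on the vectors $\lambda_j^\bv$; this coupling is why the relations are cleanest to prove in tandem with (C1) and (C2) as a single induction. Once the idempotent stage is secured, the twist computations are formal and identical in form to the base case.
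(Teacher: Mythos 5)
Your proposal is correct, and it reorganizes the induction in a way the paper does not. The paper runs the same outer induction on the length of $\bw$, but its inductive step verifies \eqref{vava-1}--\eqref{vava-6} for $\bv[k]$ by direct case-by-case computation with the decorated elements $\tau_k^\bv$, $e_i^\bv$, using the level-$\bv$ relations as rewriting rules. Your two devices --- the endomorphism $\phi_\bv\colon\mathcal A\to\mathcal A$, $s_i\mapsto\tau_i^\bv$, $e_i\mapsto e_i^\bv$ (well defined precisely because the inductive hypotheses \eqref{vava-1}, \eqref{vava-2}, \eqref{vava-4}, \eqref{vava-5}, \eqref{vava-6} reproduce the defining presentation of $\mathcal A$), and the uniform twist lemma for $Y=X+2(1-X)E$ --- appear nowhere in the paper. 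In effect, every computation in the paper's proof is the $\phi_\bv$-image of one of your universal computations (replace $s_k$ by $\tau_k^\bv$ and $e_i$ by $e_i^\bv$ throughout), so the computational content is identical; what your packaging buys is that the verification is done once, with undecorated generators, and that \eqref{vava-3}--\eqref{vava-6} for both $s$ and $\tau$ fall out of a single lemma rather than being checked relation-by-relation in each sub-case, while the paper's version is more elementary in that it never invokes the universal property of the presentation of $\mathcal A$. Two caveats, neither fatal. First, your reduction needs the summation set of $e_+^{\bv[k]}$ in Definition~$\Lambda$-\ref{L:5} (indices with $\lambda_j^{\bv[k]}\neq\lambda_j^\bv$) to coincide with your conjugation set $P$ (indices where condition ($*$) holds); this uses (C1)/(C2) at level $\bv$, so the Proposition is really part of a joint induction with Theorem~\ref{con-prec-1}. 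You flag this explicitly, and the paper relies on the same identification tacitly (e.g.\ in its proof of \eqref{vava-1} it writes $e_i^{\bv[k]}=\tau_k^\bv e_i^\bv\tau_k^\bv$ exactly on the set where $\lambda_i^{\bv[k]}\neq\lambda_i^\bv$), so you are on the same footing as the paper here. Second, your claim that $\hat e_jX=\hat e_j$ follows ``by repeated use of $e_ms_p=e_m$'' understates the case $j=k$: there one also needs $e_ks_k=s_k+e_k-1$, $Fs_k=F$ and $e_kF=0$ --- i.e.\ exactly the identities behind your key relation $(1-e_k)s_kFs_k=F$; the computation does close up, so this is a presentational shortfall, not a mathematical gap.
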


\begin{proof}

We use induction. If $\bw =[]$, all the relations follow from the definitions. Assume the relations hold for $\bv$. In what follows, we show that they hold for $\bv[k]$, $k \in \mathcal I$.  
 
\underline{Relation \eqref{vava-1}:}
Since $e_i^\bv \tau_k^\bv = e_i^\bv$ for $i \neq k$ by induction, we have $(1-e_k^\bv)\tau_k^\bv=(1-e_k^\bv)$, and obtain
\begin{align*}
\sum_{i=1}^n e_i^{\bv[k]} &= e_k^{\bv[k]} + \sum_{i \neq k} e_i^{\bv[k]}\\
&= e_k^\bv -e_k^\bv \sum_{\substack{ i \neq k \\ \lambda_i^{\bv[k]} \neq \lambda_i^\bv}} e_i^{\bv[k]} + \sum_{\substack{ i \neq k \\ \lambda_i^{\bv[k]} \neq \lambda_i^\bv}} e_i^{\bv[k]} + \sum_{\substack{ i \neq k \\ \lambda_i^{\bv[k]} =\lambda_i^\bv}} e_i^{\bv[k]}\\
&= e_k^\bv+(1 -e_k^\bv) \sum_{\substack{ i \neq k \\ \lambda_i^{\bv[k]} \neq \lambda_i^\bv}} \tau_k^\bv e_i^\bv \tau_k^\bv + \sum_{\substack{ i \neq k \\ \lambda_i^{\bv[k]} =\lambda_i^\bv}} e_i^\bv\\
&= e_k^\bv+\sum_{\substack{ i \neq k \\ \lambda_i^{\bv[k]} \neq \lambda_i^\bv}} (1 -e_k^\bv) e_i^\bv+ \sum_{\substack{ i \neq k \\ \lambda_i^{\bv[k]} =\lambda_i^\bv}} e_i^\bv=\sum_{i=1}^n e_i^\bv =1.    
\end{align*}

\underline{Relations \eqref{vava-2}:}
Suppose that $i \neq k$ and $j \neq k$. Note that $e_i^\bv \tau_k^\bv= e_i^\bv$ and $e_j^\bv \tau_k^\bv= e_j^\bv$. 
Assume $e_i^{\bv[k]}=e_i^\bv$ and $e_j^{\bv[k]}=e_j^\bv$. Then
\[ e_i^{\bv[k]} e_j^{\bv[k]} =  e_i^\bv e_j^\bv = \delta_{ij} e_i^\bv =\delta_{ij} e_i^{\bv [k]} . \]
Assume $e_i^{\bv[k]}=\tau_k^\bv e_i^\bv \tau_k^\bv$ and $e_j^{\bv[k]}=e_j^\bv$. Then
\[ e_i^{\bv[k]} e_j^{\bv[k]} = \tau_k^\bv e_i^\bv \tau_k^\bv e_j^\bv = \tau_k^\bv e_i^\bv e_j^\bv = \delta_{ij} \tau_k^\bv e_i^\bv =\delta_{ij} e_i^{\bv [k]} . \]
Assume $e_i^{\bv[k]}=e_i^\bv$ and $e_j^{\bv[k]}=\tau_k^\bv e_j^\bv \tau_k^\bv$. Then
\[ e_i^{\bv[k]} e_j^{\bv[k]} =  e_i^\bv \tau_k^\bv e_j^\bv \tau_k^\bv = e_i^\bv e_j^\bv \tau_k^\bv = \delta_{ij} e_i^\bv
 \tau_k^\bv  =\delta_{ij} e_i^{\bv [k]} . \]
Assume $e_i^{\bv[k]}=\tau_k^\bv e_i^\bv \tau_k^\bv$ and $e_j^{\bv[k]}=\tau_k^\bv e_j^\bv \tau_i^\bv$. Then
\[ e_i^{\bv[k]} e_j^{\bv[k]} =  \tau_k^\bv e_i^\bv \tau_k^\bv \tau_k^\bv e_j^\bv \tau_k^\bv = \tau_k^\bv e_i^\bv e_j^\bv \tau_k^\bv =\delta_{ij} \tau_k^\bv e_i^\bv \tau_k^\bv =\delta_{ij} e_i^{\bv [k]} . \]   			

For $i \neq k$ and $j \neq k$, write $A= \left ( 1- \displaystyle{\sum_{i \neq k, \ \lambda_i^{\bv[k]} \neq \lambda_i^\bv}} e_i^{\bv[k]} \right)$ for the time being, and we get
\begin{align*} e_k^{\bv[k]} e_j^{\bv[k]} &= e_k^\bv  A e_j^{\bv[k]} =\begin{cases} e_k^\bv ( e_j^{\bv[k]} - e_j^{\bv[k]} ) =0 & \text{ if } \lambda_i^{\bv[k]} \neq \lambda_i^\bv, \\  e_k^\bv e_j^{\bv[k]}=e_k^\bv e_j^\bv=0 & \text{ if } \lambda_i^{\bv[k]} = \lambda_i^\bv, \end{cases} \\  
e_i^{\bv[k]} e_k^{\bv[k]} &= e_i^{\bv[k]} e_k^\bv A 
= \begin{cases} \tau_k^\bv e_i^\bv \tau_k^\bv e_k^\bv A = \tau_k^\bv e_i^\bv e_k^\bv A=0 & \text{ if } \lambda_i^{\bv[k]} \neq \lambda_i^\bv ,\\ e_i^\bv e_k^\bv A =0 & \text{ if } \lambda_i^{\bv[k]} = \lambda_i^\bv, \\ \end{cases} \\
e_k^{\bv[k]} e_k^{\bv[k]} &= e_k^\bv A e_k^\bv A = (e_k^\bv - \sum_{i \neq k, \ \lambda_i^{\bv[k]} \neq \lambda_i^\bv} e_k^\bv \tau_k^\bv e_i^\bv \tau_k^\bv e_k^\bv) A= e_k^\bv A = e_k^{\bv[k]}. 
\end{align*}
We have proven \[ e_i^{\bv[k]} e_j^{\bv[k]}= \delta_{ij} e_i^{\bv[k]} \] for all $i,j \in \mathcal I$.

\underline{Relations \eqref{vava-3}:}
Assume that $i \neq j$ and $i \neq k$. Suppose that $e_i^{\bv[k]}=e_i^\bv$ and $e_j^{\bv[k]}=e_j^\bv$. Then we have
\begin{align*}
e_i^{\bv[k]} s_j^{\bv[k]} &= e_i^\bv ( \tau_j^\bv + 2(1-\tau_j^\bv) e_{s,j}^{\bv[k]}) = e_i^\bv + 2 e_i^\bv (1-\tau_j^\bv) e_{s,j}^{\bv[k]}=e_i^\bv=e_i^{\bv[k]}.
\end{align*}
Suppose that $e_i^{\bv[k]}=\tau_k^\bv e_i^\bv \tau_k^\bv$ and $e_j^{\bv[k]}=e_j^\bv$.
\begin{align*}
e_i^{\bv[k]} s_j^{\bv[k]} &= \tau_k^\bv e_i^\bv \tau_k^\bv ( \tau_j^\bv + 2(1-\tau_j^\bv) e_{s,j}^{\bv[k]}) = \tau_k^\bv e_i^\bv + 2  (\tau_k^\bv e_i^\bv -\tau_k^\bv e_i^\bv \tau_j^\bv) e_{s,j}^{\bv[k]}=\tau_k^\bv e_i^\bv=e_i^{\bv[k]}.
\end{align*}
Suppose that $e_i^{\bv[k]}=e_i^\bv$ and $e_j^{\bv[k]}=\tau_k^\bv e_j^\bv \tau_k^\bv$.
\begin{align*}
e_i^{\bv[k]} s_j^{\bv[k]} &= e_i^\bv \left [ \tau_k^\bv \tau_j^\bv \tau_k^\bv + 2(1- \tau_k^\bv \tau_j^\bv \tau_k^\bv) e_{s,j}^{\bv [k]} \right ] = e_i^\bv + 2 e_i^\bv (1-\tau_k^\bv \tau_j^\bv \tau_k^\bv) e_{s,j}^{\bv[k]}=e_i^\bv=e_i^{\bv[k]}.
\end{align*}
Suppose that $e_i^{\bv[k]}=\tau_k^\bv e_i^\bv \tau_k^\bv$ and $e_j^{\bv[k]}=\tau_k^\bv e_j^\bv \tau_k^\bv$. Note that 
\[ \tau_k^\bv e_i^\bv \tau_k^\bv  \tau_k^\bv \tau_j^\bv \tau_k^\bv =\tau_k^\bv e_i^\bv \tau_j^\bv \tau_k^\bv= \tau_k^\bv e_i^\bv \tau_k^\bv.\]  Then we have
\begin{align*}
e_i^{\bv[k]} s_j^{\bv[k]} &=\tau_k^\bv e_i^\bv \tau_k^\bv \left [ \tau_k^\bv \tau_j^\bv \tau_k^\bv + 2(1- \tau_k^\bv \tau_j^\bv \tau_k^\bv) e_{s,j}^{\bv [k]} \right ] \\
&= \tau_k^\bv e_i^\bv \tau_k^\bv + 2(\tau_k^\bv e_i^\bv \tau_k^\bv-\tau_k^\bv e_i^\bv \tau_k^\bv)e_{s,j}^{\bv[k]} = e_i^{\bv[k]}.
\end{align*}

Assume that $i =k \neq j$.
Suppose that $e_j^{\bv[k]} = e_j^\bv$. Note that 
\[e_k^{\bv[k]} \tau_j^\bv=\left ( e_k^\bv- \displaystyle{\sum_{\ell \neq k, \ \lambda_\ell^{\bv[k]} \neq \lambda_\ell^\bv}} e_k^\bv \tau_k^\bv e_\ell^\bv \tau_k^\bv \right) \tau_j^\bv =\left ( e_k^\bv- \displaystyle{\sum_{\ell \neq k, \ \lambda_\ell^{\bv[k]} \neq \lambda_\ell^\bv}} e_k^\bv \tau_k^\bv e_\ell^\bv \tau_k^\bv \right)=e_k^{\bv[k]}.\]
Then we have
\begin{align*}
e_k^{\bv[k]} s_j^{\bv[k]} &= e_k^{\bv[k]} ( \tau_j^\bv + 2(1-\tau_j^\bv) e_{s,j}^{\bv[k]})\\
&= e_k^{\bv[k]} \tau_j^\bv + 2 e_k^{\bv[k]}(1-\tau_j^\bv) e_{s,j}^{\bv[k]} 
= e_k^{\bv[k]}.
\end{align*}
Suppose that $e_j^{\bv[k]} = \tau_k^\bv e_j^\bv \tau_k^\bv$. Note that 
\begin{align*} e_k^{\bv[k]} \tau_k^\bv \tau_j^\bv \tau_k^\bv &=(1- \sum_{\ell \neq k} e_\ell^{\bv[k]}) \tau_k^\bv \tau_j^\bv \tau_k^\bv \\
&= \tau_k^\bv \tau_j^\bv \tau_k^\bv - \sum_{\ell \neq k, \ \lambda_\ell^{\bv[k]}=\lambda_\ell^\bv} e_\ell^\bv\tau_k^\bv \tau_j^\bv \tau_k^\bv-\sum_{\ell \neq k, \ \lambda_\ell^{\bv[k]} \neq \lambda_\ell^\bv} \tau_k^\bv e_\ell^\bv\tau_j^\bv  \tau_k^\bv \\
&= \tau_k^\bv \tau_j^\bv \tau_k^\bv - \tau_k^\bv e_j^\bv \tau_j^bv \tau_k^\bv - \sum_{\ell \neq k, \ \lambda_\ell^{\bv[k]}=\lambda_\ell^\bv} e_\ell^\bv-\sum_{\ell \neq k,j, \ \lambda_\ell^{\bv[k]} \neq \lambda_\ell^\bv} \tau_k^\bv e_\ell^\bv  \tau_k^\bv \\
&= 1- \tau_k^\bv e_j^\bv \tau_k^\bv -
\sum_{\ell \neq k, \ \lambda_\ell^{\bv[k]}=\lambda_\ell^\bv} e_\ell^\bv-\sum_{\ell \neq k,j, \ \lambda_\ell^{\bv[k]} \neq \lambda_\ell^\bv} \tau_k^\bv e_\ell^\bv  \tau_k^\bv \\
&= 1-\sum_{\ell \neq k} e_\ell^{\bv[k]} 
=e_k^{\bv[k]}.
\end{align*}
Then we have
\begin{align*}
e_k^{\bv[k]} s_j^{\bv[k]} &= e_k^{\bv[k]} (\tau_k^\bv \tau_j^\bv\tau_k^\bv + 2(1- \tau_k^\bv \tau_j^\bv \tau_k^\bv) e_{s,j}^{\bv[k]})= e_k^{\bv[k]}.
\end{align*}

Assume that $i=j \neq k$. Suppose that $\lambda_i^{\bv[k]}=\lambda_i^\bv$. Since $e_i^\bv e_{s,i}^{\bv[k]}=0$, we get 
\begin{align*}
e_i^{\bv[k]} s_i^{\bv[k]} &= e_i^\bv ( \tau_i^\bv + 2(1-\tau_i^\bv) e_{s,i}^{\bv[k]} ) \\
&= e_i^\bv \tau_i^\bv - 2 e_i^\bv \tau_i^\bv e_{s,i}^{\bv[k]} = \tau_i^\bv + e_i^\bv-1 -2 ( \tau_i^\bv + e_i^\bv -1) e_{s,i}^{\bv[k]} \\
&= \tau_i^\bv + 2(1-\tau_i^\bv) e_{s,i}^{\bv[k]} + e_i^\bv -1 
= s_i^{\bv[k]} + e_i^{\bv[k]} -1.
\end{align*} 
The case $\lambda_i^{\bv[k]} \neq \lambda_i^\bv$ is similar to the case  $\lambda_i^{\bv[k]}=\lambda_i^\bv$.
We omit the computations for this case.

Assume that $i=j=k$. Then 
\begin{align*}
e_k^{\bv[k]} s_k^{\bv[k]} &= (1 - \sum_{\ell \neq k} e_k^{\bv[k]} ) s_k^{\bv[k]} = s_k^{\bv[k]} - \sum_{\ell\neq k} e_\ell^{\bv[k]} s_k^{\bv[k]} = 
s_k^{\bv[k]} - \sum_{\ell\neq k} e_\ell^{\bv[k]}= s_k^{\bv[k]} + e_k^{\bv[k]} -1. \end{align*}

\underline{Relations \eqref{vava-4}:}
For $i \neq j$, we have $e_i^{\bv[k]}(1-s_i^{\bv[k]})=0$ and
\begin{align*}
e_i^{\bv[k]} \tau_j^{\bv[k]} &= e_i^{\bv[k]}(s_j^{\bv[k]} + 2(1 -s_j^{\bv[k]})e_{\tau,j}^{\bv[k]})=e_i^{\bv[k]}.
\end{align*}
For $i =j$, we get
\begin{align*}
e_i^{\bv[k]} \tau_i^{\bv[k]} &= e_i^{\bv[k]}(s_j^{\bv[k]} + 2(1 -s_i^{\bv[k]})e_{\tau,i}^{\bv[k]}) = e_i^{\bv[k]}s_j^{\bv[k]} + 2e_i^{\bv[k]}(1 -s_i^{\bv[k]})e_{\tau,i}^{\bv[k]} \\ 
&= s_i^{\bv[k]} + e_i^{\bv[k]} -1 -2 e_i^{\bv[k]} s_i^{\bv[k]} e_{\tau,i}^{\bv[k]} = s_i^{\bv[k]} + 2 (1-s_i^{\bv[k]}) e_{\tau,i}^{\bv[k]}+ e_i^{\bv[k]} -1 \\ &= \tau_i^{\bv[k]} + e_i^{\bv[k]} -1.
\end{align*}

\underline{Relations \eqref{vava-5}:}
Suppose that $i=k$ or  $i \neq k$ and $\lambda_i^{\bv[k]}=\lambda_i^\bv$. Since $e_j^\bv \tau_i^\bv = e_j^\bv$ and $\tau_k^\bv e_j^\bv \tau_k^\bv \tau_i^\bv = \tau_k^\bv e_j^\bv \tau_k^\bv$ for $j \neq i,k$, we have
\[ e_j^{\bv[k]} \tau_i^\bv = e_j^{\bv[k]} \qquad \text{ for } j \neq i. \] Thus $e_{s,i}^{\bv[k]} \tau_i^\bv = e_{s,i}^\bv$ or $e_{s,i}^{\bv[k]}(1-\tau_i^\bv)=0$, and we have
\begin{align*} s_i^{\bv[k]} s_i^{\bv[k]}& = (\tau_i^\bv + 2(1-\tau_i^\bv) e_{s,i}^{\bv[k]})(\tau_i^\bv + 2(1-\tau_i^\bv) e_{s,i}^{\bv[k]})\\
&= 1+ 2 \tau_i^\bv(1-\tau_i^\bv) e_{s,i}^{\bv[k]} + 2 (1-\tau_i^\bv) e_{s,i}^{\bv[k]}\tau_i^\bv + 4 (1-\tau_i^\bv) e_{s,i}^{\bv[k]}(1-\tau_i^\bv) e_{s,i}^{\bv[k]}\\ 
&= 1+ 2 (\tau_i^\bv -1) e_{s,i}^{\bv[k]} + 2 (1-\tau_i^\bv) e_{s,i}^{\bv[k]}=1.
\end{align*} 
Suppose that $i \neq k$ and $\lambda_i^{\bv[k]} \neq \lambda_i^\bv$. Since
$e_j^{\bv[k]} \tau_k^\bv \tau_i^\bv \tau_k^\bv =  e_j^{\bv[k]}$ for $j \neq i$, the computation is similar to the previous case to obtain $s_i^{\bv[k]} s_i^{\bv[k]}=1$ in this case as well. Furthermore, 
since $e_{\tau,i}^{\bv[k]} s_i^{\bv[k]} = e_{\tau,i}^{\bv[k]}$, 
we get 
\[ \tau_i^{\bv[k]} \tau_i^{\bv[k]} = 
(s_i^{\bv[k]} + 2(1-s_i^{\bv[k]}) e_{s,i}^{\bv[k]})(s_i^{\bv[k]} + 2(1-s_i^{\bv[k]}) e_{\tau,i}^{\bv[k]})=1  . \]

\underline{Relations \eqref{vava-6}:}
Assume $i \neq k$, and suppose that $\lambda_i^{\bv[k]} \neq \lambda_i^\bv$. Then
\begin{align*}
s_i^{\bv[k]} e_i^{\bv[k]} &= (\tau_k^\bv \tau_i^\bv \tau_k^\bv+ 2(1 - \tau_k^\bv \tau_i^\bv \tau_k^\bv) e_{s,i}^{\bv[k]}) e_i^{\bv[k]} \\
&= \tau_k^\bv \tau_i^\bv \tau_k^\bv e_i^{\bv[k]} = \tau_k^\bv \tau_i^\bv \tau_k^\bv \tau_k^\bv e_i^\bv \tau_k^\bv = - \tau_k^\bv e_i^\bv \tau_k^\bv = - e_i^{\bv[k]}.
\end{align*}
The case $\lambda_i^{\bv[k]}= \lambda_i^\bv$ is similar. 
For $i =k$, we obtain
\begin{align*}
s_k^{\bv[k]} e_k^{\bv[k]} &= (\tau_k^\bv + 2(1 - \tau_k^\bv ) e_{s,k}^{\bv[k]}) e_k^{\bv[k]} = \tau_k^\bv e_k^{\bv[k]}\\
& = \tau_k^\bv e_k^\bv ( 1 - \sum_{\ell \neq k , \ \lambda_\ell^{\bv[k]} \neq \lambda_\ell^\bv} e_j^{\bv[k]} ) = - e_k^\bv ( 1 - \sum_{\ell \neq k , \ \lambda_\ell^{\bv[k]} \neq \lambda_\ell^\bv} e_j^{\bv[k]} ) = - e_k^{\bv[k]}.
\end{align*}

For $i \in \mathcal I$, we have
\[ \tau_i^{\bv[k]} e_i^{\bv[k]} = (s_i^{\bv[k]} + 2(1-s_i^{\bv[k]}) e_{\tau,i}^{\bv[k]} ) e_i^{\bv[k]} = s_i^{\bv[k]} e_i^{\bv[k]} = - e_i^{\bv[k]}. \]

\end{proof}

\begin{proof}[Proof of Theorem \ref{con-prec-1}]
The statements (C1) and (C2) are true for $\bw=[]$ from the definitions.
Assume that (C1) and (C2) hold for $\bv$. We will show that they also hold for $\bv[k]$, $k \in \mathcal I$. There are cases (1)-(6) according to the order of $i,j,k$, and each case has several subcases. Since arguments are all similar, we will show details for the cases (1), (3), (4) and (6) and skip some details for the other cases.    

To begin with, let us recall some definitions for ease of reference. 
From the definition of mutation in \eqref{eqn-mmuu}, we have 
\begin{equation} \label{eqn-mmuu-11} b_{ij}^{\bv[k]} = \begin{cases} -b_{ij}^\bv & \text{ if  $i=k$ or $j=k$}, \\ b_{ij}^\bv + \mathrm{sgn}(b_{ik}^\bv) \, b_{ik}^\bv b_{kj}^\bv & \text{ if } b_{ik}^\bv b_{kj}^\bv >0, \\ b_{ij}^\bv & \text{ otherwise}, \end{cases}
\end{equation} and 
rewrite the definition of $c$-vectors as  
\begin{equation} \label{eqn-def-cvec}
c_i^{\bv[k]} = \begin{cases} - c_i^\bv & \text{ if } i=k, \\ c_i^\bv + \mathrm{sgn}(b_{ik}^\bv) b_{ik}^\bv c_k^\bv & \text{ if } b_{ik}^\bv c_k^\bv >0, \\ c_i^\bv & \text{ otherwise}. \end{cases}
\end{equation}

For $i\neq k$, consider the condition
\[ \lambda_i^{\bv} < s_k^{\bv}(\lambda_i^{\bv}) \text{ and } k \prec i,  \text{ or } \lambda_i^{\bv} > s_k^{\bv}(\lambda_i^{\bv})  \text{ and } k \succ i \tag{$*$},\]  and rewrite \eqref{fin-la}, \eqref{eibvk} and \eqref{s_i^}:
\begin{align} \label{fin-la-1} \lambda_i^{\bv[k]} &=  \begin{cases}  \tau_k^{\bv} ( \lambda_i^{\bv} ) & \text{ if ($*$) is true},  \\ \lambda_i^{\bv} & \text{ otherwise}; \end{cases} \\
\label{eibvk-1} 
e_i^{\bv[k]} &=   \begin{cases}  \tau_k^\bv e_i^\bv \tau_k^\bv & \text{ if ($*$) is true}, \\   e_i^\bv & \text{ otherwise}; \end{cases}\\
\label{s_i^-1} 
s_i^{\bv[k]} &= \begin{cases} \tau_k^\bv \tau_i^\bv \tau_k^\bv + 2 ( 1- \tau_k^\bv \tau_i^\bv \tau_k^\bv ) e^{\bv[k]}_{s,i} &  \text{ if ($*$) is true}, \\ \tau_i^\bv+ 2(1-\tau_i^\bv) e^{\bv[k]}_{s,i} &  \text{ otherwise}.  \end{cases} 
\end{align}

In each of the following cases (1)-(6), we will show the statements (C1) and (C2):
\begin{equation*} \lambda_i^\bw =  c_i^\bw \qquad \text{ for all $i \in \mathcal I$}; \tag{\it C1} \end{equation*}
for $i,j \in \mathcal I$,
\begin{align*} 
e_i^\bw(\lambda_j^\bw)& = \delta_{ij} \lambda_j^\bw , & s_i^\bw(\lambda_j^\bw)& = \begin{cases} \lambda_j^\bw +b_{ji}^\bw \lambda_i^\bw & \text{ if } i \prec j, \\ -\lambda_j^\bw & \text{ if } i =j, \\\lambda_j^\bw -b_{ji}^\bw \lambda_i^\bw & \text{ if } i \succ j . \tag{\it C2} \end{cases} 
\end{align*}

\medskip

\noindent \underline{1) Assume that $k \prec i \preceq j$.}
By induction we have
\[ s_k^\bv(\lambda_i^\bv) = \lambda_i^\bv + b_{ik}^\bv \lambda_k^\bv, \qquad s_k^\bv(\lambda_j^\bv) = \lambda_j^\bv + b_{jk}^\bv \lambda_k^\bv.\] 

a) Suppose $b_{ik}^\bv \lambda_k^\bv  =-\lambda_i^\bv +s_k^\bv(\lambda_i^\bv) <0$ and $b_{jk}^\bv\lambda_k^\bv =-\lambda_j^\bv +s_k^\bv(\lambda_j^\bv) <0$. Then from \eqref{eqn-def-cvec}, we  
have \[ c_i^{\bv [k]} =c_i^\bv,\qquad c_j^{\bv [k]} =c_j^\bv,\]
and obtain from \eqref{fin-la-1} 
\[ \lambda_i^{\bv [k]} =\lambda_i^\bv,\qquad \lambda_j^{\bv [k]} =\lambda_j^\bv. \]
By induction, \[ \lambda_i^{\bv[k]} = c_i^{\bv[k]}, \qquad  \lambda_i^{\bv[k]} = c_i^{\bv[k]},\]
which proves (C1) in this case.

From \eqref{eibvk-1},
\[ e_i^{\bv[k]}=e_i^\bv,\qquad  e_j^{\bv[k]}=e_j^\bv, \]
and by induction, 
\begin{align*} e_i^{\bv[k]}(\lambda_j^{\bv[k]})&=e_i^\bv(\lambda_j^\bv) =0,& e_i^{\bv[k]}(\lambda_i^{\bv[k]})&=e_i^\bv(\lambda_i^\bv)=\lambda_i^\bv=\lambda_i^{\bv[k]}, \\
e_j^{\bv[k]}(\lambda_i^{\bv[k]})&=e_j^\bv(\lambda_i^\bv) =0, & e_j^{\bv[k]}(\lambda_j^{\bv[k]})&=e_j^\bv(\lambda_j^\bv)=\lambda_j^\bv=\lambda_j^{\bv[k]}.
\end{align*}
We also have \[s_i^{\bv[k]}=\tau_i^\bv+2(1-\tau_i^\bv) e_{s,i}^{\bv[k]},\qquad s_j^{\bv[k]}=\tau_j^\bv+2(1-\tau_j^\bv) e_{s,j}^{\bv[k]}.\]

From the definitions, $(i,j), (j,i) \not \in \mathcal P_s(\bv, \bv[k]) \cup \mathcal P_\tau(\bv, \bv[k])$, and thus 
\begin{align*}
s_i^{\bv[k]} \lambda_j^{\bv[k]} &= (\tau_i^\bv+2(1-\tau_i^\bv) e_{s,i}^{\bv[k]})\lambda_j^{\bv[k]} 
= \tau_i^\bv \lambda_j^{\bv[k]} = \tau_i^\bv \lambda_j^\bv \\ & = (s_i^\bv + 2(1-s_i^\bv) e_{\tau,i}^\bv ) \lambda_j^\bv = s_i^\bv \lambda_j^\bv \\ & = \begin{cases} \lambda_i^\bv + b_{ji}^\bv \lambda_k^\bv = \lambda_i^{\bv[k]} + b_{ji}^{\bv[k]} \lambda_k^{\bv[k]} & \text{ if } i \neq j,\\ -\lambda_i^\bv = - \lambda_i^{\bv[k]} & \text{ if } i =j . \end{cases} 
\end{align*}
Similarly, we get
\[ s_j^{\bv[k]} \lambda_i^{\bv[k]}= \lambda_i^{\bv[k]} - b_{ij}^{\bv[k]} \lambda_k^{\bv[k]} \quad \text{ for } i \neq j . \]
This proves (C2) in this case.

b) Suppose $b_{ik}^\bv\lambda_k^\bv =-\lambda_i^\bv +s_k^\bv(\lambda_i^\bv) >0$ and $b_{jk}^\bv\lambda_k^\bv =-\lambda_j^\bv +s_k^\bv(\lambda_j^\bv) >0$.
From \eqref{eqn-def-cvec}, we  
have \[ c_i^{\bv [k]} =c_i^\bv+ \mathrm{sgn}(\lambda_k^\bv) b_{ik}^\bv c_k^\bv,\qquad c_j^{\bv [k]} =c_j^\bv+ \mathrm{sgn}(\lambda_k^\bv) b_{jk}^\bv c_k^\bv.\] 
On the other hand, we obtain from \eqref{fin-la-1} 
\[ \lambda_i^{\bv [k]} =\tau_k^\bv(\lambda_i^\bv) = (s_k^\bv + 2( 1- s_k^\bv) e_{\tau,k}^\bv ) (\lambda_i^\bv). \]
If $\lambda_k^\bv <0$ then $(k,i) \in \mathcal P_\tau(\bv, \bv[k])$ and 
\begin{equation} \label{eqn-++}
\lambda_i^{\bv[k]} = (s_k^\bv + 2(1 -s_k^\bv)) (\lambda_i^\bv) = 2 \lambda_i^\bv -s_k^\bv(\lambda_i^\bv) = \lambda_i^\bv - b_{ik}^\bv \lambda_k^\bv = c_i^{\bv[k]} \end{equation}  by induction. 
If $\lambda_k^\bv >0$ then $(k,i) \not \in \mathcal P_\tau(\bv, \bv[k])$ and 
\begin{equation} \label{eqn-+-}
\lambda_i^{\bv[k]} = s_k^\bv \lambda_i^\bv = \lambda_i^\bv + b_{ik}^\bv \lambda_k^\bv = c_i^{\bv[k]}. 
\end{equation}
Similarly, $\lambda_j^{\bv[k]} = c_j^{\bv[k]}$. 
This proves (C1) in this case.

From \eqref{eibvk-1},
\[ e_i^{\bv[k]}=\tau_k^\bv e_i^\bv \tau_k^\bv,\qquad  e_j^{\bv[k]}=\tau_k^\bv e_j^\bv \tau_k^\bv, \]
and by induction, 
\begin{align*} e_i^{\bv[k]}(\lambda_j^{\bv[k]})&=\tau_k^\bv e_i^\bv\tau_k^\bv (\tau_k^\bv \lambda_j^\bv) =\tau_k^\bv e_i^\bv (\lambda_j^\bv)=0, \\  e_i^{\bv[k]}(\lambda_i^{\bv[k]})&=\tau_k^\bv e_i^\bv\tau_k^\bv (\tau_k^\bv \lambda_i^\bv)=\tau_k^\bv e_i^\bv(\lambda_i^\bv)=\tau_k^\bv \lambda_i^\bv=\lambda_i^{\bv[k]}.
\end{align*}
Similarly, $e_j^{\bv[k]}(\lambda_i^{\bv[k]})=0$ and $e_j^{\bv[k]}(\lambda_j^{\bv[k]})=e_j^{\bv[k]}$.

We have
\[s_i^{\bv[k]}=\tau_k^\bv \tau_i^\bv\tau_k^\bv+2(1-\tau_k^\bv\tau_i^\bv\tau_k^\bv) e_{s,i}^{\bv[k]},\qquad s_j^{\bv[k]}=\tau_k^\bv\tau_j^\bv\tau_k^\bv+2(1-\tau_k^\bv\tau_j^\bv\tau_k^\bv) e_{s,j}^{\bv[k]}.\]
From the definitions, $(i,j), (j,i) \not \in \mathcal P_s(\bv, \bv[k]) \cup \mathcal P_\tau(\bv, \bv[k])$, and thus 
\begin{align*}
s_i^{\bv[k]} \lambda_j^{\bv[k]} &= (\tau_k^\bv\tau_i^\bv\tau_k^\bv+2(1-\tau_k^\bv\tau_i^\bv\tau_k^\bv) e_{s,i}^{\bv[k]})\lambda_j^{\bv[k]} 
= \tau_k^\bv\tau_i^\bv \tau_k^\bv\lambda_j^{\bv[k]}= \tau_k^\bv \tau_i^\bv \lambda_j^\bv  \\ & = \tau_k^\bv(s_i^\bv + 2(1-s_i^\bv) e_{\tau,i}^\bv ) \lambda_j^\bv = \tau_k^\bv s_i^\bv \lambda_j^\bv.
\end{align*}
If $i \neq j$ and $\lambda_k^\bv <0$, then we obtain from \eqref{eqn-++}
\begin{align*}
s_i^{\bv[k]} \lambda_j^{\bv[k]} &= \tau_k^\bv s_i^\bv \lambda_j^\bv = \tau_k^\bv ( \lambda_j^\bv + b_{ji}^\bv \lambda_i^\bv) = \tau_k^\bv \lambda_j^\bv + b_{ji}^\bv (s_k^\bv + 2( 1-s_k^\bv) e_{\tau,k}^\bv) \lambda_i^\bv \\ &= \lambda_j^{\bv [k]} + b_{ji}^{\bv [k]}(2-s_k^\bv) \lambda_i^\bv =\lambda_j^{\bv [k]} + b_{ji}^{\bv [k]} \lambda_i^{\bv[k]}.
\end{align*}
If $i \neq j$ and $\lambda_k^\bv >0$, then it follows from \eqref{eqn-+-} that
\begin{align*}
s_i^{\bv[k]} \lambda_j^{\bv[k]} &=  \tau_k^\bv \lambda_j^\bv + b_{ji}^\bv (s_k^\bv + 2( 1-s_k^\bv) e_{\tau,k}^\bv) \lambda_i^\bv \\ &= \lambda_j^{\bv [k]} + b_{ji}^{\bv [k]}s_k^\bv \lambda_i^\bv =\lambda_j^{\bv [k]} + b_{ji}^{\bv [k]} \lambda_i^{\bv[k]}.
\end{align*}
Similarly, we get
\[ s_j^{\bv[k]} \lambda_i^{\bv[k]}= \lambda_i^{\bv[k]} - b_{ij}^{\bv[k]} \lambda_k^{\bv[k]} \quad \text{ for } i \neq j . \]
If $i=j$ then
\begin{align*}
s_i^{\bv[k]} \lambda_i^{\bv[k]} &= (\tau_k^\bv\tau_i^\bv\tau_k^\bv+2(1-\tau_k^\bv\tau_i^\bv\tau_k^\bv) e_{s,i}^{\bv[k]})\lambda_i^{\bv[k]}=\tau_k^\bv \tau_i^\bv \tau_k^\bv \lambda_i^{\bv[k]} = \tau_k^\bv \tau_i^\bv \lambda_i^\bv \\ &= \tau_k^\bv s_i^\bv \lambda_i^\bv = - \tau_k^\bv \lambda_i^\bv = - \lambda_i^{\bv[k]}.\end{align*} 
This proves (C2) in this case.

c) Suppose $b_{ik}^\bv\lambda_k^\bv =-\lambda_i^\bv +s_k^\bv(\lambda_i^\bv) <0$ and $b_{jk}^\bv\lambda_k^\bv =-\lambda_j^\bv +s_k^\bv(\lambda_j^\bv) >0$. 
From \eqref{eqn-def-cvec}, we  
have \[ c_i^{\bv [k]} =c_i^\bv,\qquad c_j^{\bv [k]} =c_j^\bv+ \mathrm{sgn}(\lambda_k^\bv) b_{jk}^\bv c_k^\bv.\] 
On the other hand, we obtain from \eqref{fin-la-1} 
\[ \lambda_i^{\bv [k]} =\lambda_i^\bv, \qquad \lambda_j^{\bv[k]}=\tau_k^\bv(\lambda_j^\bv) = (s_k^\bv + 2( 1- s_k^\bv) e_{\tau,k}^\bv ) (\lambda_j^\bv). \]
Thus $\lambda_i^{\bv[k]} = c_i^{\bv[k]}$ by induction, and using the same argument as in (b), we also see that $\lambda_j^{\bv[k]} = c_j^{\bv[k]}$. 
Therefore (C1) is true in this case.

From \eqref{eibvk-1},
\[ e_i^{\bv[k]}= e_i^\bv ,\qquad  e_j^{\bv[k]}=\tau_k^\bv e_j^\bv \tau_k^\bv, \]
and it follows from similar computations to those in (a) and (b) that 
\begin{align*} e_i^{\bv[k]}(\lambda_j^{\bv[k]})&=0,&   e_i^{\bv[k]}(\lambda_i^{\bv[k]})&=\lambda_i^{\bv[k]},\\ e_j^{\bv[k]}(\lambda_i^{\bv[k]})&=0,&   e_j^{\bv[k]}(\lambda_j^{\bv[k]})&=\lambda_j^{\bv[k]}.
\end{align*}

We have
\[s_i^{\bv[k]}=\tau_i^\bv+2(1-\tau_i^\bv) e_{s,i}^{\bv[k]},\qquad s_j^{\bv[k]}=\tau_k^\bv\tau_j^\bv\tau_k^\bv+2(1-\tau_k^\bv\tau_j^\bv\tau_k^\bv) e_{s,j}^{\bv[k]}.\]
From the definitions, $(i,j) \in \mathcal P_s(\bv, \bv[k]) \cap \mathcal P_\tau(\bv, \bv[k])$, and thus 
\begin{align*}
s_i^{\bv[k]} \lambda_j^{\bv[k]} &= (\tau_i^\bv+2(1-\tau_i^\bv) e_{s,i}^{\bv[k]})\lambda_j^{\bv[k]} 
= \tau_i^\bv \lambda_j^{\bv[k]}+2(1-\tau_i^\bv) \lambda_j^{\bv[k]}= 2 \lambda_j^{\bv[k]} - \tau_i^\bv \lambda_j^{\bv[k]}  \\ & =2 \lambda_j^{\bv[k]} - (s_i^\bv + 2(1-s_i^\bv) e_{\tau,i}^\bv ) \lambda_j^{\bv[k]} .
\end{align*}
If $i \neq j$ and $\lambda_k^\bv <0$, then $(k,i) \not \in \mathcal P_\tau(\bv, \bv[k])$, $(k,j) \in \mathcal P_\tau(\bv, \bv[k])$, and thus $\lambda_j^{\bv[k]}=\tau_k^\bv (\lambda_j^\bv) = \lambda_j^\bv -b_{jk}^\bv \lambda_k^\bv$ and 
by \eqref{eqn-mmuu-11}
\begin{align*}
s_i^{\bv[k]} \lambda_j^{\bv[k]} &= 2 \lambda_j^{\bv[k]} - (s_i^\bv + 2(1-s_i^\bv) e_{\tau,i}^\bv )(\lambda_j^\bv -b_{jk}^\bv \lambda_k^\bv) \\
&=2 \lambda_j^{\bv[k]} -(s_i^\bv \lambda_j^\bv - b_{jk}^\bv ( \lambda_k^\bv - b_{ki}^\bv \lambda_i^\bv) + 2 (1 -s_i^\bv) \lambda_j^\bv) \\
&= 2 \lambda_j^{\bv[k]} - (2\lambda_j^\bv -s_i^\bv \lambda_j^\bv - b_{jk}^\bv \lambda_k^\bv + b_{jk}^\bv b_{ki}^\bv \lambda_i^\bv ) \\
&= 2 \lambda_j^{\bv[k]} - (\lambda_j^\bv - b_{ji}^\bv \lambda_i^\bv - b_{jk}^\bv \lambda_k^\bv + b_{jk}^\bv b_{ki}^\bv \lambda_i^\bv ) \\
&= \lambda_j^{\bv[k]} + (b_{ji}^\bv - b_{jk}^\bv b_{ki}^\bv) \lambda_i^{\bv[k]} = \lambda_j^{\bv[k]} + b_{ji}^{\bv[k]} \lambda_i^{\bv[k]}. 
\end{align*}
If $i \neq j$ and $\lambda_k^\bv >0$, then $(k,i), (k,j) \not \in \mathcal P_\tau(\bv, \bv[k])$, and thus $\lambda_j^{\bv[k]}=\tau_k^\bv (\lambda_j^\bv) = \lambda_j^\bv +b_{jk}^\bv \lambda_k^\bv$ and by \eqref{eqn-mmuu-11}  
\begin{align*}
s_i^{\bv[k]} \lambda_j^{\bv[k]} &= 2 \lambda_j^{\bv[k]} - (s_i^\bv + 2(1-s_i^\bv) e_{\tau,i}^\bv )(\lambda_j^\bv +b_{jk}^\bv \lambda_k^\bv) \\
&= \lambda_j^{\bv[k]} + (b_{ji}^\bv + b_{jk}^\bv b_{ki}^\bv) \lambda_i^{\bv[k]} = \lambda_j^{\bv[k]} + b_{ji}^{\bv[k]} \lambda_i^{\bv[k]}. 
\end{align*}
Similarly, we get
\[ s_j^{\bv[k]} \lambda_i^{\bv[k]}= \lambda_i^{\bv[k]} - b_{ij}^{\bv[k]} \lambda_k^{\bv[k]} \quad \text{ for } i \neq j \quad \text{ and } \quad  s_i^{\bv[k]} \lambda_i^{\bv[k]}= - \lambda_i^{\bv[k]}.  \]
This proves (C2) in this case.

d) Suppose $b_{ik}^\bv\lambda_k^\bv =-\lambda_i^\bv +s_k^\bv(\lambda_i^\bv) >0$ and $b_{jk}^\bv\lambda_k^\bv =-\lambda_j^\bv +s_k^\bv(\lambda_j^\bv) <0$.
This case is similar to case (c) right above.

\medskip

\noindent \underline{2) Assume that $i \preceq j \prec k$.}
Since this case is similar to case (1), we omit the details.

\medskip

\noindent \underline{3) Assume that $i \prec k \prec j$.}
By induction we have
\[ s_k^\bv(\lambda_i^\bv) = \lambda_i^\bv - b_{ik}^\bv \lambda_k^\bv, \qquad s_k^\bv(\lambda_j^\bv) = \lambda_j^\bv + b_{jk}^\bv \lambda_k^\bv.\] 

a) Suppose $b_{ik}^\bv\lambda_k^\bv =\lambda_i^\bv -s_k^\bv(\lambda_i^\bv) <0$ and $b_{jk}^\bv\lambda_k^\bv =-\lambda_j^\bv +s_k^\bv(\lambda_j^\bv) <0$.
From \eqref{eqn-def-cvec}, we  
have \[ c_i^{\bv [k]} =c_i^\bv,\qquad c_j^{\bv [k]} =c_j^\bv.\] 
It follows from  \eqref{fin-la-1} that 
\[ \lambda_i^{\bv [k]} =\lambda_i^\bv, \qquad \lambda_j^{\bv[k]}=\lambda_j^\bv. \]
Thus $\lambda_i^{\bv[k]} = c_i^{\bv[k]}$ and $\lambda_j^{\bv[k]} = c_j^{\bv[k]}$ by induction. Thus (C1) is true in this case.

From \eqref{eibvk-1},
\[ e_i^{\bv[k]}= e_i^\bv ,\qquad  e_j^{\bv[k]}= e_j^\bv, \]
and it follows from induction that 
\begin{align*} e_i^{\bv[k]}(\lambda_j^{\bv[k]})&=0,&   e_i^{\bv[k]}(\lambda_i^{\bv[k]})&=\lambda_i^{\bv[k]},\\ e_j^{\bv[k]}(\lambda_i^{\bv[k]})&=0,&   e_j^{\bv[k]}(\lambda_j^{\bv[k]})&=\lambda_j^{\bv[k]}.
\end{align*}

We have
\[s_i^{\bv[k]}=\tau_i^\bv+2(1-\tau_i^\bv) e_{s,i}^{\bv[k]},\qquad s_j^{\bv[k]}=\tau_j^\bv+2(1-\tau_j^\bv) e_{s,j}^{\bv[k]}.\]
Clearly, $(i,j), (j,i) \not\in \mathcal P_s(\bv, \bv[k]) \cup \mathcal P_\tau(\bv, \bv[k])$, and thus 
\begin{align*}
s_i^{\bv[k]} \lambda_j^{\bv[k]} &= (\tau_i^\bv+2(1-\tau_i^\bv) e_{s,i}^{\bv[k]})\lambda_j^{\bv[k]} 
= \tau_i^\bv \lambda_j^{\bv[k]}= (s_i^\bv + 2(1-s_i^\bv) e_{\tau,i}^\bv ) \lambda_j^\bv \\ &= s_i^\bv \lambda_j^\bv = \lambda_j^\bv + b_{ji}^\bv \lambda_i^\bv\\ & = \begin{cases} \lambda_i^\bv + b_{ji}^\bv \lambda_k^\bv = \lambda_i^{\bv[k]} + b_{ji}^{\bv[k]} \lambda_k^{\bv[k]} & \text{ if } i \neq j,\\ -\lambda_i^\bv = - \lambda_i^{\bv[k]} & \text{ if } i =j . \end{cases}
\end{align*}
Similarly, we get
\[ s_j^{\bv[k]} \lambda_i^{\bv[k]}= \lambda_i^{\bv[k]} - b_{ij}^{\bv[k]} \lambda_k^{\bv[k]} \quad \text{ for } i \neq j \quad \text{ and } \quad  s_i^{\bv[k]} \lambda_i^{\bv[k]}= - \lambda_i^{\bv[k]}.  \]
This proves (C2) in this case.

b) Suppose $b_{ik}^\bv\lambda_k^\bv =\lambda_i^\bv -s_k^\bv(\lambda_i^\bv) >0$ and $b_{jk}^\bv\lambda_k^\bv =-\lambda_j^\bv +s_k^\bv(\lambda_j^\bv) >0$.
From \eqref{eqn-def-cvec}, we  
have \[ c_i^{\bv [k]} =c_i^\bv+ \mathrm{sgn}(\lambda_k^\bv) b_{ik}^\bv c_k^\bv,\qquad c_j^{\bv [k]} =c_j^\bv+ \mathrm{sgn}(\lambda_k^\bv) b_{jk}^\bv c_k^\bv.\] 
We obtain from \eqref{fin-la-1} 
\[ \lambda_i^{\bv [k]} =\tau_k^\bv(\lambda_i^\bv) = (s_k^\bv + 2( 1- s_k^\bv) e_{\tau,k}^\bv ) (\lambda_i^\bv). \]
If $\lambda_k^\bv >0$ then $(k,i) \in \mathcal P_\tau(\bv, \bv[k])$ and 
\begin{equation} \label{eqn-+++}
\lambda_i^{\bv[k]} = (s_k^\bv + 2(1 -s_k^\bv)) (\lambda_i^\bv) = 2 \lambda_i^\bv -s_k^\bv(\lambda_i^\bv) = \lambda_i^\bv + b_{ik}^\bv \lambda_k^\bv = c_i^{\bv[k]} \end{equation}  by induction. 
If $\lambda_k^\bv <0$ then $(k,i) \not \in \mathcal P_\tau(\bv, \bv[k])$ and 
\begin{equation} \label{eqn-+--}
\lambda_i^{\bv[k]} = s_k^\bv \lambda_i^\bv = \lambda_i^\bv - b_{ik}^\bv \lambda_k^\bv = c_i^{\bv[k]}. 
\end{equation}
Similarly, $\lambda_j^{\bv[k]} = c_j^{\bv[k]}$. 
This proves (C1) in this case.

From \eqref{eibvk-1},
\[ e_i^{\bv[k]}=\tau_k^\bv e_i^\bv \tau_k^\bv,\qquad  e_j^{\bv[k]}=\tau_k^\bv e_j^\bv \tau_k^\bv, \]
and it follows from induction that 
\begin{align*} e_i^{\bv[k]}(\lambda_j^{\bv[k]})&=0,&   e_i^{\bv[k]}(\lambda_i^{\bv[k]})&=\lambda_i^{\bv[k]},\\ e_j^{\bv[k]}(\lambda_i^{\bv[k]})&=0,&   e_j^{\bv[k]}(\lambda_j^{\bv[k]})&=\lambda_j^{\bv[k]}.
\end{align*}

We have
\[s_i^{\bv[k]}=\tau_k^\bv \tau_i^\bv\tau_k^\bv+2(1-\tau_k^\bv\tau_i^\bv\tau_k^\bv) e_{s,i}^{\bv[k]},\qquad s_j^{\bv[k]}=\tau_k^\bv\tau_j^\bv\tau_k^\bv+2(1-\tau_k^\bv\tau_j^\bv\tau_k^\bv) e_{s,j}^{\bv[k]}.\]
Clearly, $(i,j), (j,i) \not \in \mathcal P_s(\bv, \bv[k]) \cup \mathcal P_\tau(\bv, \bv[k])$, and as in (1)-(b),  
\begin{align*}
s_i^{\bv[k]} \lambda_j^{\bv[k]} &= \tau_k^\bv s_i^\bv \lambda_j^\bv.
\end{align*}
If $\lambda_k^\bv >0$, then we obtain from \eqref{eqn-+++}
\begin{align*}
s_i^{\bv[k]} \lambda_j^{\bv[k]} &= \tau_k^\bv s_i^\bv \lambda_j^\bv = \tau_k^\bv ( \lambda_j^\bv + b_{ji}^\bv \lambda_i^\bv) = \tau_k^\bv \lambda_j^\bv + b_{ji}^\bv (s_k^\bv + 2( 1-s_k^\bv) e_{\tau,k}^\bv) \lambda_i^\bv \\ &= \lambda_j^{\bv [k]} + b_{ji}^{\bv [k]}(2-s_k^\bv) \lambda_i^\bv =\lambda_j^{\bv [k]} + b_{ji}^{\bv [k]} \lambda_i^{\bv[k]}.
\end{align*}
If $\lambda_k^\bv <0$, then it follows from \eqref{eqn-+--} that
\begin{align*}
s_i^{\bv[k]} \lambda_j^{\bv[k]} &=  \tau_k^\bv \lambda_j^\bv + b_{ji}^\bv (s_k^\bv + 2( 1-s_k^\bv) e_{\tau,k}^\bv) \lambda_i^\bv \\ &= \lambda_j^{\bv [k]} + b_{ji}^{\bv [k]}s_k^\bv \lambda_i^\bv =\lambda_j^{\bv [k]} + b_{ji}^{\bv [k]} \lambda_i^{\bv[k]}.
\end{align*}
Similarly, we get
\[ s_j^{\bv[k]} \lambda_i^{\bv[k]}= \lambda_i^{\bv[k]} - b_{ij}^{\bv[k]} \lambda_k^{\bv[k]}. \]
This proves (C2) in this case.

c) Suppose $b_{ik}^\bv\lambda_k^\bv =\lambda_i^\bv -s_k^\bv(\lambda_i^\bv) <0$ and $b_{jk}^\bv\lambda_k^\bv =-\lambda_j^\bv +s_k^\bv(\lambda_j^\bv) >0$. 
From \eqref{eqn-def-cvec}, we  
have \[ c_i^{\bv [k]} =c_i^\bv,\qquad c_j^{\bv [k]} =c_j^\bv+ \mathrm{sgn}(\lambda_k^\bv) b_{jk}^\bv c_k^\bv.\] 
On the other hand, we obtain from \eqref{fin-la-1} 
\[ \lambda_i^{\bv [k]} =\lambda_i^\bv, \qquad \lambda_j^{\bv[k]}=\tau_k^\bv(\lambda_j^\bv) = (s_k^\bv + 2( 1- s_k^\bv) e_{\tau,k}^\bv ) (\lambda_j^\bv). \]
Thus $\lambda_i^{\bv[k]} = c_i^{\bv[k]}$ by induction, and using the same argument as in (b), we also see that $\lambda_j^{\bv[k]} = c_j^{\bv[k]}$. 
Therefore (C1) is true in this case.

From \eqref{eibvk-1},
\[ e_i^{\bv[k]}= e_i^\bv ,\qquad  e_j^{\bv[k]}=\tau_k^\bv e_j^\bv \tau_k^\bv, \]
and it follows from induction that 
\begin{align*} e_i^{\bv[k]}(\lambda_j^{\bv[k]})&=0,&   e_i^{\bv[k]}(\lambda_i^{\bv[k]})&=\lambda_i^{\bv[k]},\\ e_j^{\bv[k]}(\lambda_i^{\bv[k]})&=0,&   e_j^{\bv[k]}(\lambda_j^{\bv[k]})&=\lambda_j^{\bv[k]}.
\end{align*}

We have
\[s_i^{\bv[k]}=\tau_i^\bv+2(1-\tau_i^\bv) e_{s,i}^{\bv[k]},\qquad s_j^{\bv[k]}=\tau_k^\bv\tau_j^\bv\tau_k^\bv+2(1-\tau_k^\bv\tau_j^\bv\tau_k^\bv) e_{s,j}^{\bv[k]}.\]
From the definitions, $(i,j), (j,i) \not \in \mathcal P_s(\bv, \bv[k])$, and thus 
\begin{align*}
s_i^{\bv[k]} \lambda_j^{\bv[k]} &= (\tau_i^\bv+2(1-\tau_i^\bv) e_{s,i}^{\bv[k]})\lambda_j^{\bv[k]} 
= \tau_i^\bv \lambda_j^{\bv[k]}.
\end{align*}
If $\lambda_k^\bv <0$, then $(k,i) \not\in \mathcal P_\tau(\bv, \bv[k])$,
 $(k,j) \in \mathcal P_\tau(\bv, \bv[k])$, and thus $\lambda_j^{\bv[k]}=\tau_k^\bv (\lambda_j^\bv) = \lambda_j^\bv -b_{jk}^\bv \lambda_k^\bv$ and by \eqref{eqn-mmuu-11}
\begin{align*}
s_i^{\bv[k]} \lambda_j^{\bv[k]} &= \tau_i^\bv \lambda_j^{\bv[k]} = \tau_i^\bv ( \lambda_j^\bv - b_{jk}^\bv \lambda_k^\bv)  =(s_i^\bv + 2(1-s_i^\bv) e_{\tau,i}^\bv )(\lambda_j^\bv -b_{jk}^\bv \lambda_k^\bv) \\
&=s_i^\bv \lambda_j^\bv - b_{jk}^\bv s_i^\bv \lambda_k^\bv  
= \lambda_j^\bv + b_{ji}^\bv \lambda_i^\bv - b_{jk}^\bv ( \lambda_k^\bv + b_{ki}^\bv \lambda_i^\bv)\\ &= \lambda_j^\bv - b_{jk}^\bv + (b_{ji}^\bv - b_{jk}^\bv b_{ki}^\bv) \lambda_i^\bv  = \lambda_j^{\bv[k]} + b_{ji}^{\bv[k]} \lambda_i^{\bv[k]}. 
\end{align*}
If $\lambda_k^\bv >0$, then $(k,i), (k,j) \not \in \mathcal P_\tau(\bv, \bv[k])$ and thus $\lambda_j^{\bv[k]}=\tau_k^\bv (\lambda_j^\bv) = \lambda_j^\bv +b_{jk}^\bv \lambda_k^\bv$ and by \eqref{eqn-mmuu-11}  
\begin{align*}
s_i^{\bv[k]} \lambda_j^{\bv[k]} &= \tau_i^\bv \lambda_j^{\bv[k]} = (s_i^\bv + 2(1-s_i^\bv) e_{\tau,i}^\bv )(\lambda_j^\bv +b_{jk}^\bv \lambda_k^\bv) \\ 
&=s_i^\bv \lambda_j^\bv + b_{jk}^\bv s_i^\bv\lambda_k^\bv  
= \lambda_j^\bv + b_{ji}^\bv \lambda_i^\bv + b_{jk}^\bv ( \lambda_k^\bv + b_{ki}^\bv \lambda_i^\bv) \\ &= \lambda_j^{\bv[k]} + (b_{ji}^\bv + b_{jk}^\bv b_{ki}^\bv) \lambda_i^{\bv[k]} = \lambda_j^{\bv[k]} + b_{ji}^{\bv[k]} \lambda_i^{\bv[k]}. 
\end{align*}
Similarly, we get
\[ s_j^{\bv[k]} \lambda_i^{\bv[k]}= \lambda_i^{\bv[k]} - b_{ij}^{\bv[k]} \lambda_k^{\bv[k]} \quad \text{ for } i \neq j \quad \text{ and } \quad  s_i^{\bv[k]} \lambda_i^{\bv[k]}= - \lambda_i^{\bv[k]}.  \]
This proves (C2) in this case.

d) Suppose $b_{ik}^\bv\lambda_k^\bv =\lambda_i^\bv -s_k^\bv(\lambda_i^\bv) >0$ and $b_{jk}^\bv\lambda_k^\bv =-\lambda_j^\bv +s_k^\bv(\lambda_j^\bv) <0$.
This case is similar to (c) and we omit the details.
\medskip

\noindent \underline{4) Assume that $i \prec k =j$.}
By induction we have
\[ s_k^\bv(\lambda_i^\bv) = \lambda_i^\bv - b_{ik}^\bv \lambda_k^\bv, \qquad s_k^\bv(\lambda_k^\bv) =-\lambda_k^\bv .\] 

a) Suppose $b_{ik}^\bv\lambda_k^\bv =\lambda_i^\bv -s_k^\bv(\lambda_i^\bv) <0$.
From \eqref{eqn-def-cvec}, we  
have \[ c_i^{\bv [k]} =c_i^\bv,\qquad c_k^{\bv [k]} =-c_k^\bv.\] 
Since $(k,k) \not \in \mathcal P_{\tau}(\bv, \bv[k])$, we obtain from \eqref{fin-la} and induction 
\begin{align} \lambda_i^{\bv [k]} &=\lambda_i^\bv, \nonumber \\
\lambda_k^{\bv[k]}&=\tau_k^\bv (\lambda_k^\bv) =(s_k^\bv + 2(1-s_k^\bv)e_{\tau,k}^\bv) \lambda_k^\bv = s_k^\bv \lambda_k^\bv =-\lambda_k^\bv. \label{eqn-llb-} 
\end{align}
Thus $\lambda_i^{\bv[k]} = c_i^{\bv[k]}$ and $\lambda_k^{\bv[k]} = c_k^{\bv[k]}$ by induction, and  (C1) is true in this case.

From \eqref{eibvk-1} and \eqref{vava-1},
\[ e_i^{\bv[k]}= e_i^\bv ,\qquad  e_k^{\bv[k]}= 1- \sum_{\ell \neq k} e_\ell^{\bv[k]}, \]
and it follows from induction that 
\begin{align*} e_i^{\bv[k]}(\lambda_k^{\bv[k]})&=e_i^\bv( - \lambda_k^\bv)=0,\qquad   e_i^{\bv[k]}(\lambda_i^{\bv[k]})=e_i^\bv \lambda_i^\bv=\lambda_i^{\bv[k]},\\ e_k^{\bv[k]}(\lambda_i^{\bv[k]})&=(1- \sum_{\ell \neq k} e_\ell^{\bv[k]}) \lambda_i^{\bv[k]} = \lambda_i^{\bv[k]}- \lambda_i^{\bv[k]} =0.&  \end{align*}

We have
\[s_i^{\bv[k]}=\tau_i^\bv+2(1-\tau_i^\bv) e_{s,i}^{\bv[k]},\qquad s_k^{\bv[k]}=\tau_k^\bv+2(1-\tau_k^\bv) e_{s,k}^{\bv[k]}.\]
We see that  $(k,i) \not\in \mathcal P_s(\bv, \bv[k]) \cup \mathcal P_\tau(\bv, \bv[k])$, and thus 
\begin{align*}
s_i^{\bv[k]} \lambda_k^{\bv[k]} &= (\tau_i^\bv+2(1-\tau_i^\bv) e_{s,i}^{\bv[k]})\lambda_k^{\bv[k]} 
= \tau_i^\bv \lambda_k^{\bv[k]}= -(s_i^\bv + 2(1-s_i^\bv) e_{\tau,i}^\bv ) \lambda_k^\bv \\ &= -s_i^\bv \lambda_k^\bv = -\lambda_k^\bv - b_{ki}^\bv \lambda_i^\bv = \lambda_k^{\bv[k]} +b_{ki}^{\bv[k]} \lambda_i^{\bv[k]}. 
\end{align*}
Similarly, we get
\[ s_k^{\bv[k]} \lambda_i^{\bv[k]}= \lambda_i^{\bv[k]} - b_{ik}^{\bv[k]} \lambda_k^{\bv[k]}.  \]
This proves (C2) in this case.

b) Suppose $b_{ik}^\bv\lambda_k^\bv = \lambda_i^\bv -s_k^\bv(\lambda_i^\bv) >0$.
From \eqref{eqn-def-cvec}, we  
have \[ c_i^{\bv [k]} =c_i^\bv+ \mathrm{sgn}(\lambda_k^\bv) b_{ik}^\bv c_k^\bv, \qquad c_k^{\bv [k]} =-c_k^\bv.\] 
On the other hand, we obtain from \eqref{fin-la} 
\[ \lambda_i^{\bv [k]} =\tau_k^\bv(\lambda_i^\bv) = (s_k^\bv + 2( 1- s_k^\bv) e_{\tau,k}^\bv ) (\lambda_i^\bv), \qquad \lambda_k^{\bv[k]}=-\lambda_k^\bv. \]
If $\lambda_k^\bv <0$ then $(k,i) \not\in \mathcal P_\tau(\bv, \bv[k])$ and $\lambda_i^{\bv[k]} = s_k^\bv \lambda_i^\bv = \lambda_i^\bv-b_{ik}^\bv \lambda_k^\bv$;
if $\lambda_k^\bv >0$ then $(k,i) \in \mathcal P_\tau(\bv, \bv[k])$ and $\lambda_i^{\bv[k]} = (2-s_k^\bv)\lambda_i^\bv = \lambda_i^\bv+b_{ik}^\bv \lambda_k^\bv$.  Thus $\lambda_i^{\bv[k]} = c_i^{\bv[k]}$ and $\lambda_k^{\bv[k]} = c_k^{\bv[k]}$ by induction, and (C1) is true in this case.

From \eqref{eibvk-1}, \eqref{vava-1} and \eqref{vava-4},
\[ e_i^{\bv[k]}= \tau_k^\bv e_i^\bv\tau_k^\bv=\tau_k^\bv e_i^\bv ,\qquad  e_k^{\bv[k]}= 1- \sum_{\ell \neq k} e_\ell^{\bv[k]}, \]
and it follows from induction that 
\begin{align*} e_i^{\bv[k]}(\lambda_k^{\bv[k]})&=\tau_k^\bv e_i^\bv( - \lambda_k^\bv)=0,\qquad   e_i^{\bv[k]}(\lambda_i^{\bv[k]})=\tau_k^\bv e_i^\bv \tau_k^\bv \tau_k^\bv \lambda_i^\bv=\tau_k^\bv \lambda_i^\bv = \lambda_i^{\bv[k]},\\ e_k^{\bv[k]}(\lambda_i^{\bv[k]})&=(1- \sum_{\ell \neq k} e_\ell^{\bv[k]}) \lambda_i^{\bv[k]} = \lambda_i^{\bv[k]}- \lambda_i^{\bv[k]} =0.&  \end{align*}

We have
\[s_i^{\bv[k]}=\tau_k^\bv \tau_i^\bv \tau_k^\bv+2(1-\tau_k^\bv\tau_i^\bv\tau_k^\bv) e_{s,i}^{\bv[k]},\qquad s_k^{\bv[k]}=\tau_k^\bv+2(1-\tau_k^\bv) e_{s,k}^{\bv[k]}.\]
If $\lambda_k^\bv<0$, then $(k,i) \not\in \mathcal P_\tau(\bv, \bv[k])$ and $(k,i) \in \mathcal P_s(\bv, \bv[k])$, and thus \begin{align*}
s_i^{\bv[k]} \lambda_k^{\bv[k]} &= (\tau_k^\bv\tau_i^\bv\tau_k^\bv+2(1-\tau_k^\bv\tau_i^\bv\tau_k^\bv) e_{s,i}^{\bv[k]})\lambda_k^{\bv[k]}= (2 - \tau_k^\bv \tau_i^\bv \tau_k^\bv)(-\lambda_k^\bv)\\& = -2 \lambda_k^\bv - \tau_k^\bv \tau_i^\bv \lambda_k^\bv = -2 \lambda_k^\bv -\tau_k(s_i^\bv + 2 (1 -s_i^\bv)e_{\tau,i}^\bv) \lambda_k^\bv \\ & = -2 \lambda_k^\bv -\tau_k^\bv s_i^\bv \lambda_k^\bv = -2 \lambda_k^\bv - \tau_k^\bv(\lambda_k^\bv+ b_{ki}^\bv \lambda_i^\bv) \\ &= -2 \lambda_k^\bv +\lambda_k^\bv - b_{ki}^\bv \tau_k^\bv \lambda_i^\bv = \lambda_k^{\bv[k]} + b_{ki}^{\bv[k]} \lambda_i^{\bv[k]},
\end{align*}
and since
$\lambda_i^{\bv[k]}=\tau_k^\bv \lambda_i^\bv = s_k^\bv \lambda_i^\bv = \lambda_i^\bv -b_{ik}^\bv \lambda_k^\bv,$ 
we have
\begin{align*}
s_k^{\bv[k]} \lambda_i^{\bv[k]} &= (\tau_k^\bv + 2( 1-\tau_k^\bv) e_{s,i}^\bv) \lambda_i^{\bv[k]}  = (2-\tau_k^\bv) \tau_k^\bv \lambda_i^\bv = 2 \tau_k^\bv \lambda_i^\bv - \lambda_i^\bv \\ & = 2 (\lambda_i^\bv -b_{ik}^\bv \lambda_k^\bv) - \lambda_i^\bv = (\lambda_i^\bv - b_{ik}^\bv \lambda_k^\bv) - b_{ik}^\bv \lambda_k^\bv = \lambda_i^{\bv[k]} -b_{ik}^{\bv[k]} \lambda_k^{\bv[k]}.
\end{align*}
If $\lambda_k^\bv>0$, then $(k,i) \in \mathcal P_\tau(\bv, \bv[k])$ and $(k,i) \not \in \mathcal P_s(\bv, \bv[k])$, and the computations are similar to the case right above.
This proves (C2) in this case.

\medskip

\noindent \underline{5) Assume that $i=k \prec j$.}
Since this case is similar to case (4), we omit the details. 

\medskip

\noindent \underline{6) Assume that $i =j=k$.}
From \eqref{eqn-def-cvec}, we  
have $c_k^{\bv [k]} =-c_k^\bv$. As seen in \eqref{eqn-llb-}, we have
$\lambda_k^{\bv[k]}=-\lambda_k^\bv$. Thus by induction $c_k^{\bv[k]}=\lambda_k^{\bv[k]}$, and (C1) holds.
In cases (4) and (5), it is proven that $e_\ell^{\bv[k]} \lambda_k^{\bv[k]} = 0$ for $\ell \neq k$. Thus using \eqref{vava-1}, we have
\[ e_k^{\bv[k]} \lambda_k^{\bv[k]}= (1 - \sum_{\ell \neq k} e_\ell^{\bv[k]}) \lambda_k^{\bv[k]} = \lambda_k^{\bv[k]}. \]
Finally, since $(k,k) \not \in \mathcal P_s(\bv, \bv[k])$, we see that 
\[ s_k^{\bv[k]} \lambda_k^{\bv[k]} = (\tau_k^\bv+2(1-\tau_k^\bv) e_{s,k}^{\bv[k]} ) \lambda_k^{\bv[k]} = \tau_k^\bv \lambda_k^{\bv[k]} = \tau_k^\bv \tau_k^\bv \lambda_k^\bv = \lambda_k^\bv = - \lambda_k^{\bv[k]}, \] where we use \eqref{vava-5}. 
This proves (C2) in this case, and a proof of Theorem \ref{con-prec-1} has been completed.  

\end{proof}

\end{document}